\documentclass[11pt, a4paper,reqno]{amsart}

\usepackage{xspace,
	amsthm,
	amsmath,
	amssymb,
	bbm,
	%showkeys,
	%refcheck,
	tikz,
	graphicx,
	subfig,
	xcolor,
	keyval,
	mathtools
}

%\captionsetup{font=footnotesize}%\usepackage[font=footnotesize]{caption}

\usetikzlibrary{decorations.pathreplacing}

\usepackage[utf8]{inputenc}

\usepackage{geometry}
\geometry{hmargin={2cm,2cm}}
\geometry{vmargin={2cm,2cm}}
\usepackage{amsmath,stackrel}
\usepackage{tikz}
\usetikzlibrary{shapes.geometric,calc,decorations.markings,math}

\tikzstyle{nodo}=[circle,draw,fill,inner sep=0pt,minimum size=1.5mm]
%\widthof{r}]
\tikzstyle{infinito}=[circle,inner sep=0pt,minimum size=0mm]

\newcommand\rr{{\mathbb R}}

\newcommand\nn{{\mathbb N}}

\newcommand\f{\frac}

\DeclareMathOperator*{\esssup}{ess\,sup}

\newtheorem{theorem}{Theorem}[section]
\newtheorem{proposition}[theorem]{Proposition}
\newtheorem{lemma}[theorem]{Lemma}

\theoremstyle{remark}
\newtheorem{remark}{Remark}[section]
\newtheorem*{remark*}{Remark}

\theoremstyle{definition}
\newtheorem{definition}[theorem]{Definition}

\tikzset{every loop/.style={min distance=10mm,in=300,out=240,looseness=10}}

\tikzset{place/.style={circle,thick,draw=blue!75,fill=blue!20,minimum
size=6mm}}
\tikzset{place2/.style={circle,thick,draw=red!75,fill=red!20,minimum
size=6mm}}

\newcommand{\G}{\mathcal{G}}

\newcommand{\W}{\mathcal{W}}
\newcommand{\ee}{\varepsilon}

%%% addded

%\newcommand{\Tn}{ \mathcal{T}_n}

%\newcommand{\R}{\mathbb{R}}

\newcommand{\weaks}{\stackrel{*}{\rightharpoonup}}

\numberwithin{equation}{section}

\begin{document}
	
	\title[Continuum limit of epidemiological models on graphs]{On the continuum limit of epidemiological models on graphs: Convergence and Approximation results}

	\author[B. Ayuso de Dios]{Blanca Ayuso de Dios$^{1,3}$}
	\address{$^1$Universit\'a degli Studi Milano--Bicocca, Dipartimento di Matematica e Applicazioni, via Roberto Cozzi, 55, 20126 Milano, Italy.}
	\email{blanca.ayuso@unimib.it}
	\author[S. Dovetta]{Simone Dovetta$^2$}
	\address{$^2$Politecnico di Torino, Dipartimento di Scienze Matematiche, Corso Duca degli Abruzzi 24, 10125 Torino, Italy.}
	\email{simone.dovetta@polito.it}
	\author[L.V. Spinolo]{Laura V. Spinolo$^3$}
	\address{$^3$IMATI-CNR, Via Ferrata 5, 27100 Pavia, Italy.}
	\email{spinolo@imati.cnr.it}
	
	%\date{\today}
	
	\begin{abstract} 
      We focus on an epidemiological model (the archetypical  SIR system) defined on graphs and study the asymptotic behavior of the solutions as the number of vertices in the graph diverges. By relying on the theory of so called \emph{graphons} we provide a characterization of the limit and establish convergence results. We also provide approximation results for both deterministic and random discretizations. \\ 
      
      \noindent
      {\footnotesize AMS Subject Classification (2020): 35Q92, 92D30, 35R02, 05C99} \\
	{\footnotesize Keywords: epidemiological models, SIR, continuum limit, sampling, graph limit, graphon}
	\end{abstract}
	
	\maketitle

	\vspace{-1cm}

	\section{Introduction}
	In the present work we consider epidemiological models (that is, models describing the spreading of an infectious disease) defined on graphs. 
	In real-world applications, each vertex of the graph can represent a single individual, but also a group of people living together or sharing 
	a specific trait (for instance, people in the same age group), or a geographical entity like a neighbourhood, a town, a region. We are 
	specifically interested in describing the behaviour of the model as the number $n$ of vertices in the graph goes to $+ \infty$ and we establish convergence and approximation results, based on both deterministic and random algorithms. 
	Our analysis borrows tools from the theory of so called \emph{graphons}, which plays a prominent role in modern graph theory. To the best of our knowledge, the present work is among the very first ones using the theory of graphons to study the asymptotic properties of epidemiological models on graphs as the number of vertices diverges (see also \cite{sis-graphon2,GC19,VFG20} for papers with a completely different focus).

	\smallskip
	Although the origin of mathematical epidemiology can be traced back to Bernoulli \cite{bernoulli}, nowadays the archetype of epidemiological models is the celebrated SIR (Susceptible-Infected-Recovered or Removed) system introduced by Kermack and McKendrick in \cite{McK26,KMK27}. The dimensionless form of the SIR system is 
        	\begin{equation*}
	         \f{d s}{dt} = - \beta s i\,, \qquad \f{d i}{dt} = \beta si - \gamma i\, ,\qquad \f{d r }{dt} = \gamma i\,.
	\end{equation*}
	Here, the unknowns $s, i, r: \rr_+ \to \rr$ represent the percentage of susceptible, infected and recovered individuals, whereas $\beta \ge 0$ and $\gamma\ge 0$ denote the (possibly time-dependent) infecting and recovering coefficients, respectively. Note furthermore that the sum $s+i+r\equiv 1$ remains constant in time. The analysis of SIR models, and of more refined variants (SEIR, MSEIR, ...), has grown enormously through the decades, resulting in a huge amount of literature (we refer e.g. to the monographs and lecture notes \cite{BCC01,BDW08,DG05,Mur02,Mur03}, to the review \cite{Het00} and to the references therein for extended discussions on this subject).
	Note that, in its standard version above, the SIR model implicitly assumes that contacts among individuals are uniform, i.e. that every individual has the same probability of interacting with any other. Since this is far from being true in real life, where social interactions are governed by complex patterns, several authors have investigated epidemiological models defined on networks (see for instance  \cite{BBPSV05,DGM08,MPSV02,New02,PSV01} and the reviews \cite{NPP16,PSCMV15}). The network approach is by now regarded as a feasible tool to describe epidemic spreads through heterogeneous populations. The importance of taking into account the role of population heterogeneity in the modelling of an epidemics has been stressed once more with the COVID19 pandemic, which fuelled in the last years an upsurge of research on epidemiological models from different mathematical communities (recent works in these directions are for instance \cite{perthame0, BorgsSIR, bellomo}).

\smallskip
	In what follows, even though we are confident that the results discussed in the present work could be applied to refined models with more compartments (SEIR, MSEIR,$\,\dots$), to ease the exposition we focus on the SIR model only. 
	To discuss our main contributions we now fix a graph $\G_n$ with $n$ vertices labeled $\{1, \dots, n\}$, and we consider the SIR model 
	\begin{equation}
		\label{SIR_G}
		\begin{cases}
		\displaystyle{\f{ds_j^n}{dt}=-s_j^n(t)\f1{n}\sum_{k=1}^{n}\beta_k^n(t)A_{jk}^n(t)i_k^n(t) }& \\
		\displaystyle{ \f{di_j^n}{dt}=s_j^n(t)\f1{n}\sum_{k=1}^{n}\beta_k^n(t)A_{jk}^n(t)i_k^n(t)-\gamma_j^n(t) i_j^n(t)} &\qquad\qquad\qquad j=1,\,\dots,\,n\\
		\displaystyle{ \f{dr_j^n}{dt}=\gamma_j^n(t) i_j^n(t)}\,. 
		\end{cases}
	\end{equation}
	In the above system, $s^n_j, i^n_j, r^n_j$ represent the percentage of susceptible, infected and recovered individuals at the vertex $j$, whereas $\beta^n_j$ and $\gamma^n_j$ are  the (possibly time dependent) infecting and recovering coefficients. $(A^n_{jk})_{j, k=1, \dots n}$ is the adjacency matrix  which describes the connections (\emph{edges}) between the vertices: the (possibly time dependent) coefficient $A^n_{jk}$ indicates the weight given to the edge between the vertices $j$ and $k$. If $j$ and $k$ are not connected, then $A^n_{jk}=0$. We consider graphs which contain no loops and no multiple edges, undirected (edges are not oriented) and with nonnegative weights. These characteristics of $\G_n$ translate into the conditions
	\begin{equation} \label{e:simpleun} 
	        A^n_{jj}=0 \; \text{for every $j$}, \qquad A^n_{jk}=A^n_{kj} \ge 0 \quad \text{for every $j, k=1, \dots, n$}.
	\end{equation}
	 Note however that in the present work we actually never use the no-loops condition $A^n_{jj}=0$.
	%%\COMMENT{{\rosso We use it when we use the convergence results from Borgs etc}}
System~\eqref{SIR_G} is augmented with the initial conditions 
	\begin{equation}
	\label{e:idSIR_G}
	        s_j^n(0)=s_{j,0}^n,\quad i_j^n(0)=i_{j,0}^n,\quad r_j^n(0)=r_{j,0}^n \qquad \text{for every $j=1,\,\dots,\,n$}
	\end{equation}
	and in view of modelling considerations they are such that
	\begin{equation}
	\label{e:idSIR_G2}
	        0 \leq s_{j,0}^n, i_{j,0}^n, r_{j,0}^n \leq 1,\qquad s_{j,0}^n+ i_{j,0}^n+r_{j,0}^n =1  \qquad \text{for every $j=1,\,\dots,\,n$,}
	\end{equation}
	which as we will show (see Lemma~\ref{l:zerouno} in \S\ref{sec:limit}) implies that the solution satisfies 
	\begin{equation}
	\label{e:01SIR_G}
	        0 \leq s_{j}^n(t), i_{j}^n(t), r_{j}^n(t) \leq 1,\quad s_{j}^n(t)+ i_{j}^n(t)+r_{j}^n(t) \equiv 1 \quad \text{for every $j=1,\,\dots,\,n$ and every $t \in \rr_+$}.
	\end{equation}
	In a nutshell, the present paper aims at discussing the $n\to + \infty$ limit of system~\eqref{SIR_G}. To do so, we rely on the celebrated theory of so called \emph{graphons}, which has recently flourished with a series of fundamental works like~\cite{BCCG21,BCCZ18,BCCZ19,BCL10,BCLSV06,BCLSV08,BCLSV12,BS02,Lov12,LS07}. In general, a function $W \in L^1 ([0, 1]^2;\mathbb{R})$ is called a \emph{graphon} if  $W(x, y) = W(y, x)$ (see Definition~\ref{d:graphon} in Appendix \ref{sec:graphons}). We refer to Appendix \ref{sec:graphons} for some rigorous definitions and a brief overview, mainly based on~\cite{BCCZ19}, of the main results concerning graphons related to the present paper.  Here we just mention that the basic idea of the theory of graphons is to identify a given graph $\G_n$ with $n$ vertices and adjacency matrix $(A_{jk})_{j, k=1, \dots, n}$ with a piecewise constant function $W_{\G_n}: [0, 1]^{2} \to \rr$ (the so called {\em step-graphon}) defined on the unit square. By considering a uniform partition of $]0,1[$ into $n$ intervals
\begin{equation} \label{e:ij}
    I^n_j : = \left] \f{j-1}{n}, \f{j}{n}\right[, \quad j=1, \dots, n,
\end{equation}	
and setting 
	\begin{equation}\label{G-->W_G}
       W_{\G_n}(x,y)=A^n_{jk}
       \quad \text{if $(x, y) \in I^n_j \times I^n_k$},
       \end{equation}
       for $\G_n$ satisfying~\eqref{e:simpleun}, the piecewise constant function $W_{\G_n}$ fullfills $W_{\G_n}(x, x) =0$, $W_{\G_n}(x, y) = W_{\G_n}(y, x)$ and $W_{\G_n} \ge 0$. 
 %See Figure~\ref{f:fig1} in Appendix \ref{sec:graphons} for a graphic representation of a specific example.        
       \begin{figure}[t]  
	\centering	\subfloat{\includegraphics[height=0.2\textwidth,width=0.2\textwidth]{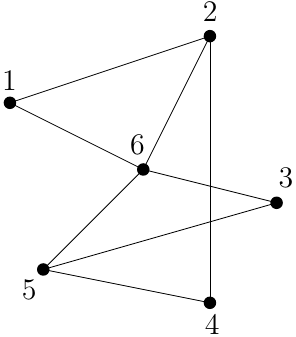}}\qquad
	\subfloat{\includegraphics[width=0.2\textwidth,height=0.2\textwidth]{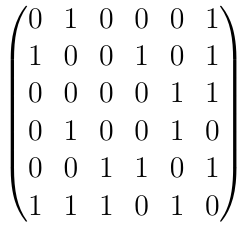}}\qquad
	\subfloat{\includegraphics[height=0.2\textwidth,width=0.2\textwidth]{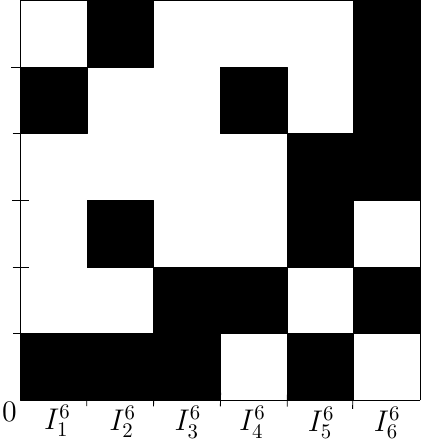}}
	\caption{A simple graph $\G$ with $n=6$ (leftmost), its adjacency matrix (center) and the pixel representation (rightmost) of the graphon $W_\G$ associated to $\G$ as in \eqref{G-->W_G}. }
	\label{f:fig1}
\end{figure}
In Figure \ref{f:fig1} is given the so called pixel picture of the graph. On the left there is a simple graph\footnote{Hence $A_{jk} \in \{0,1\}$ for all $j,k$} $\G_n$ with $n=6$, on the middle its adjacency matrix and on the right the  pixel picture of the piecewise constant step-graphon $W_{\G_n}$. Here, the little squares have all edge length $1/n$. The $0's$ entries in $A$ correspond to white squares, the $1's$ to black squares. The origin $(0,0)$ is placed at the upper left corner, in analogy with indexing matrix elements.

    To apply the theory of graphons to system~\eqref{SIR_G}, we need to identify functions defined on the set of vertices of $\G_n$ with functions defined on the unit interval. To this end, for a graph $\G_n$ with $n$ vertices we consider a vector valued function $(u^n_i)_{i=1, \dots,n}$ defined say on $\rr_+$, where each component $u^n_j$ represents the value attained at a given vertex $j$ of $\G_n$. In this work, we will always identify the vector valued function $(u^n_i)_{i=1, \dots,n}$ with the \emph{scalar} piecewise constant function $u^n$ defined on $\rr_+ \times [0, 1]$ given by 
\begin{equation}
	\label{uG->u01}
	u^n(t, x):=\sum_{j=1}^{n} u^n_j (t) \mathbbm{1}_{I^n_j}(x)\,,
	\end{equation}
	with $\mathbbm{1}_{I^n_j}$ denoting the characteristic function of the interval $I^n_j$, defined in~\eqref{e:ij}.  
	
	In this framework, the (formal) limit of~\eqref{SIR_G} is thus the system 
        \begin{equation}
		\label{SIR_W}
		\begin{cases}
		\partial_t s(t,x)=-s(t,x) \displaystyle{\int_0^1\beta(t,y)W(t,x,y)i(t,y)\,dy} &\\
		\partial_t i(t,x)=s(t,x)
         \displaystyle{\int_0^1\beta(t,y)W(t,x,y)i(t,y)\,dy-\gamma(t,x)i(t,x) } & \qquad (t, x) \in \rr_+ \times [0, 1] 
        \\
		\partial_tr(t,x)=\gamma(t,x)i(t,x)\,. \phantom{\displaystyle{\int}} & \\
		\end{cases}
	\end{equation}
	We augment~\eqref{SIR_W} with the initial condition 
	\begin{equation}
	\label{e:idSIR_W}
	   		s(0,x)=s_0(x),\; i(0,x)=i_0(x),\; r(0,x)=r_0(x)\, 
	\end{equation}
	and in virtue of~\eqref{e:01SIR_G} we assume
	\begin{equation}
	\label{e:01SIR_W}
	            0 \leq s_{0}, i_{0}, r_{0} \leq 1,\qquad s_{0}+ i_{0}+r_{0} =1  \qquad \text{a.e. on $[0, 1]$.}
	\end{equation}
The notion of solution for system \eqref{SIR_W} we will consider in this paper, which we refer to as {\em distributional solution}, is made precise in the next definition.
\begin{definition}\label{def:sol}
	Fix $T\in (0,\infty]$ and $W\in L^1_{\mathrm{loc}}\left([0,T] \times[0,1]^2;\mathbb{R}\right)$, $\beta,\gamma\in L^\infty([0,T]\times[0,1])$ and $s_0,i_0,r_0\in L^\infty(0,1)$. A triple $(s,i,r) \in \left(L^\infty\left([0,T] \times[0,1]\right)\right)^3$ is a distributional solution of the Cauchy problem \eqref{SIR_W}, \eqref{e:idSIR_W} if for every $(\varphi,  \psi, \eta) \in (C_c^\infty\left([0,T) \times[0,1]\right))^3$ we have 
	\begin{equation}
		\label{eq:weakLinfty}
		\begin{split}
			& 	
			\int_{\rr_+} \int_0^1s(t,x)\left(\partial_t\varphi(t,x)-\varphi(t,x)\int_0^1\beta(t,y)W(t,x,y)i(t,y)\,dy\right)dxdt =
			\int_0^1s_0(x)\varphi(0,x)\,dx \\ 	      
			&\int_{\rr_+}\int_0^1i(t,x)\left(\partial_t\psi(t,x)-\gamma(t,x)\psi(t,x)\right)+
			s(t,x)\psi(t,x)\int_0^1\beta(t,y)W(t,x,y)i(t,y)\,dydxdt\\
			& \qquad \qquad \qquad \qquad \qquad \qquad \qquad \qquad \qquad \qquad \qquad \qquad =\int_0^1i_0(x)\psi(0,x)\,dx  \\
			&
			\int_{\rr_+}\int_0^1r(t,x)\partial_t\eta(t,x)+\gamma(t,x)\eta(t,x)i(t,x)dxdt=
			\int_0^1r_0(x)\eta(0,x)\,dx.   \\
		\end{split}
	\end{equation}
\end{definition}

Our first result, Proposition \ref{p:uniex}, guarantees the existence and uniqueness of a distributional solution (in the sense of Definition \ref{def:sol}) to the Cauchy problem~\eqref{SIR_W}, \eqref{e:idSIR_W}.
\begin{proposition}
\label{p:uniex}
Fix $T>0$ and $W\in L^1\left([0, T] \times[0,1]^2;\mathbb{R}\right)$ satisfying $W \ge 0$ and $s_0,i_0,r_0\in L^\infty(0,1)$ satisfying~\eqref{e:01SIR_W}. 
Assume furthermore that $\gamma, \beta \in L^\infty ([0, T] \times [0, 1])$ satisfy $\gamma, \beta \ge 0$ almost everywhere on $[0,T]\times[0,1]$. 
Then if either of the following conditions hold:
\begin{description}
\item[$\bullet$ {\sc Case 1}] $W\in L^2\left([0, T] \times[0,1]^2;\mathbb{R}\right)$; or
\item[$\bullet$ {\sc Case 2}] there is a constant
$K_d>0 $ such that
\end{description}
\begin{equation}
	\label{bound_W}
	\esssup_{t \in [0, T], \, x \in [0,1]}\int_0^1W (t,x,y)\,dy\leq K_{d}\,;
	\end{equation}	
	
 then the Cauchy problem~\eqref{SIR_W}, \eqref{e:idSIR_W} admits exactly one distributional solution satisfying
\begin{equation}
\label{e:sirbound}
    0 \leq s, i, r \leq 1, \quad s+ i+r =1, \quad \text{a.e. on $[0, T] \times [0, 1]$}. 
\end{equation}
\end{proposition}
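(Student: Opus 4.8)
The plan is to recast the Cauchy problem as a fixed-point equation for the single unknown $i$, recovering $s$ and $r$ through explicit integral formulas. Writing $F[i](t,x):=\int_0^1\beta(t,y)W(t,x,y)i(t,y)\,dy$, the first and third equations in~\eqref{SIR_W} integrate to
\[
s[i](t,x)=s_0(x)\,\exp\left(-\int_0^t F[i](\tau,x)\,d\tau\right),\qquad r[i](t,x)=r_0(x)+\int_0^t\gamma(\tau,x)\,i(\tau,x)\,d\tau,
\]
while the Duhamel formula for the second equation suggests the map
\[
\Phi[i](t,x):=i_0(x)\,e^{-\int_0^t\gamma(\tau,x)\,d\tau}+\int_0^t e^{-\int_\sigma^t\gamma(\tau,x)\,d\tau}\,s[i](\sigma,x)\,F[i](\sigma,x)\,d\sigma.
\]
A fixed point of $\Phi$ is, by construction, a distributional (indeed mild) solution of~\eqref{SIR_W}, \eqref{e:idSIR_W}, after which $s[i]$ and $r[i]$ furnish the remaining two components. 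I would carry out the fixed-point argument on the closed convex set $K:=\{i:0\le i(t,x)\le 1 \text{ a.e.}\}$, on which the nonlinearity is well behaved.

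First I would fix the functional setting and verify $\Phi(K)\subseteq K$, which is exactly the continuum analogue of Lemma~\ref{l:zerouno} and delivers~\eqref{e:sirbound} at the fixed point. Under~\eqref{bound_W}, the signs $\beta,W\ge0$ give $\|F[i](t,\cdot)\|_\infty\le\|\beta\|_\infty K_d\|i(t,\cdot)\|_\infty$, so I work in $X=L^\infty([0,\tau]\times[0,1])$; under the alternative hypothesis $W\in L^2$, Cauchy--Schwarz yields $\|F[i](t,\cdot)\|_2\le C_F(t)\|i(t,\cdot)\|_2$ with $C_F(t)=\|\beta\|_\infty\|W(t,\cdot,\cdot)\|_{L^2([0,1]^2)}$, and I work in $X=C([0,\tau];L^2([0,1]))$. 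For $i\in K$ one has $F[i]\ge0$, hence $0\le s[i]\le s_0\le1$ from the exponential formula; the Duhamel expression for $\Phi[i]$ has nonnegative integrand and nonnegative data (using $i_0\ge0$ from~\eqref{e:01SIR_W}), so $\Phi[i]\ge0$; and since $\partial_t(s[i]+\Phi[i])=-\gamma\,\Phi[i]\le0$, the quantity $s[i]+\Phi[i]$ is nonincreasing, whence $\Phi[i]\le s[i]+\Phi[i]\le s_0+i_0\le1$. Thus $\Phi(K)\subseteq K$.

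Next I would prove that $\Phi$ is a contraction on $K$ for $\tau$ small. Since $F$ is linear, $F[i_1]-F[i_2]=F[i_1-i_2]$, and on $K$ the exponents $\int_0^t F[i]\,d\tau$ are nonnegative, so $|e^{-a}-e^{-b}|\le|a-b|$ applies and controls $s[i_1]-s[i_2]$ by $\int_0^t C_F\|i_1-i_2\|\,d\tau$. A direct estimate then gives $\|\Phi[i_1]-\Phi[i_2]\|_X\le L(\tau)\|i_1-i_2\|_X$, where $L(\tau)$ is governed by $\int_0^\tau C_F\,dt$, bounded by $\|\beta\|_\infty\tau K_d$ in the first case and by $\|\beta\|_\infty\sqrt{\tau}\,\|W\|_{L^2([0,\tau]\times[0,1]^2)}$ in the second; in both cases $L(\tau)\to0$ as $\tau\to0$. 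Banach's theorem then gives a unique local fixed point, and at this fixed point $\partial_t(s+i+r)=0$ yields $s+i+r\equiv s_0+i_0+r_0=1$, completing~\eqref{e:sirbound}. Because the bounds are uniform, the local existence time $\tau$ depends only on $\|\beta\|_\infty$, $\|\gamma\|_\infty$ and $K_d$ (resp.\ on the time-integrability of $C_F$), not on the size of the solution, so the local solution extends to all of $[0,T]$ by iterating the construction finitely many times; in the $L^2$ case one uses $\int_0^T C_F\,dt<\infty$ to split $[0,T]$ into finitely many subintervals on each of which $L(\cdot)<1$. Uniqueness on $[0,T]$ then follows from local uniqueness and this continuation, or directly by subtracting two solutions satisfying~\eqref{e:sirbound} and closing a Gr\"onwall estimate through the same Lipschitz bounds.

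The main obstacle is the $L^2$ alternative, where $F$ is bounded only with a time-dependent operator norm $C_F(t)$ that is merely square-integrable in $t$, and the exponent $\int_0^t F[i]\,d\tau$ need not be bounded uniformly in $x$. I would handle this by keeping the estimates in the $\sqrt{\tau}$-weighted form above, so that smallness over short intervals survives; by exploiting $s[i]\in[0,1]$ on $K$ to make the products $s[i]\,F[i]\in L^2$ meaningful; and by noting that $\int_0^T F[i](t,x)\,dt<\infty$ for a.e.\ $x$ (again by Cauchy--Schwarz), so that the exponential representation of $s$ is well defined pointwise almost everywhere.
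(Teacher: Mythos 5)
Your strategy --- recasting the Cauchy problem as a fixed point for $i$ alone through the Duhamel formulas and running Banach's theorem on the invariant set $K=\{0\le i\le 1\}$ --- is genuinely different from the paper's proof: the paper obtains \emph{existence} as a byproduct of the convergence of its discrete approximations (Theorem~\ref{thm:det} and Corollary~\ref{c:det}) and only proves \emph{uniqueness} directly, via a Gr\"onwall argument on $s_1-s_2$ and on $u_1-u_2$ with $u=s+i$. Your argument is sound under hypothesis~\eqref{bound_W}: there $F[i]\le \|\beta\|_\infty K_d\|i\|_\infty$ pointwise, the invariance computation $\partial_t(s[i]+\Phi[i])=-\gamma\,\Phi[i]\le 0$ is correct, the contraction constant scales like $\|\beta\|_\infty K_d\tau$, and continuation plus local uniqueness give the claim (modulo the standard remark, available from the time-regularity in Lemma~\ref{l:reg}, that every distributional solution satisfying~\eqref{e:sirbound} is a mild solution and hence a fixed point of $\Phi$).

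The gap is in the $L^2$ alternative, and the fixes you list do not repair it. Decomposing $s[i_1]F[i_1]-s[i_2]F[i_2]=s[i_1]\,F[i_1-i_2]+F[i_2]\,(s[i_1]-s[i_2])$, the first summand is fine ($s[i_1]\le 1$ times a function whose norm in $x$ is $C_F(\sigma)\|i_1-i_2\|_X$), but the cross term is not: $F[i_2](\sigma,\cdot)$ is controlled only in $L^2$ in $x$ (its natural bound is $\|\beta\|_\infty\|W(\sigma,x,\cdot)\|_{L^2(0,1)}$), while the only \emph{smallness} you have for $s[i_1]-s[i_2]$ is also an $L^2$-in-$x$ bound, $\int_0^\sigma C_F\|i_1-i_2\|_{L^2}\,d\tau$ (its $L^\infty$ bound is merely $\le 1$). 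No H\"older pairing of $L^2\times L^2$ lands in $L^2$, so the claimed estimate $\|\Phi[i_1]-\Phi[i_2]\|_X\le L(\tau)\|i_1-i_2\|_X$ does not follow from your bounds. Concretely, for the time-independent power-law graphon $W(x,y)=(xy)^{-\mu}$ --- precisely the class the paper advertises this framework for --- one has $\|W(x,\cdot)\|_{L^2(0,1)}\sim x^{-\mu}$, and closing your cross-term estimate would require $x^{-\mu}\in L^4(0,1)$, i.e.\ $\mu<1/4$; for $\mu\in[1/4,1/2[$ the bound is $+\infty$ even though $W\in L^2([0,1]^2)$. The paper's Gr\"onwall arguments (Case 1 of the proof of Proposition~\ref{p:uniex}, and Lemma~\ref{l:gronwall}) evade exactly this term by structure your iteration destroys: the quadratic term $-\int_0^1(s_1-s_2)^2\int_0^1\beta W i_1\,dy\,dx$ has a favorable sign and is discarded, the unbounded factor $F[i_1-i_2]$ is always multiplied by the $L^\infty$-bounded solution $s_2$ (never by another difference), and the second unknown $u=s+i$ satisfies a difference equation containing no $F$ at all. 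These cancellations are available because there $i_1,i_2$ are the two solutions themselves, whereas for your map they are arbitrary inputs in $K$, so the sign is lost. Rescuing the fixed-point route in the $L^2$ case would need a genuinely new ingredient (for instance exploiting $F[i_2]e^{-\int_0^\sigma F[i_2]\,d\tau}=-\partial_\sigma e^{-\int_0^\sigma F[i_2]\,d\tau}$ to absorb the unbounded factor), which is not in your proposal.
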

	
	Prior to state our first main result, we recall the meaning of the so called \emph{cut norm} $\| \cdot \|_{\Box}$, which for a graphon $W$ is defined as 
         		\begin{equation}
         			\label{cut1}
         			\|W\|_{\Box}:=\sup_{S,T\subseteq[0,1]}\left|\iint_{S\times T}W(x,y)\,dxdy\right|\,,
         		\end{equation}
         		where the supremum is taken over all pairs of measurable subsets $S,T$ in $[0,1]$.
         	This norm is the prototypical one in the theory of graphons (see the whole discussion in Appendix \ref{sec:graphons}).
	We can now state our first main convergence result.
         \begin{theorem}
     \label{THM 1} 
    Fix $T>0$  and let $\{ \G_n \}_{n \in \nn}$ be a sequence of time-dependent weighted undirected graphs with adjacency matrix satisfying~\eqref{e:simpleun} and 
     \begin{equation}
     \label{e:conv_tloc_cut}
      \lim_{n \to + \infty }\int_0^T\left\| W_{\G_n} (t, \cdot, \cdot)-W(t, \cdot, \cdot)\right\|_{\Box}\,dt= 0
 \end{equation}
      for some graphon $W \in L^1 ([0, T] \times [0, 1]^2;\mathbb{R})$. Assume furthermore that either of the following conditions holds:
      \begin{description}
\item[ $\bullet$ {\sc Case 1}]  there is a constant $K_0>0$ such that
      \begin{equation} 
     \label{e:bdl2}
       \| W_{\G_n}  \|_{L^2 ([0, T] \times [0, 1]^2;\mathbb{R})}  \leq K_{0}  \qquad  \text{for every $n \in \mathbb{N}$};\text{ or}
 \end{equation}
 \item[ $\bullet$ {\sc Case 2}]  there is a constant $K_1>0$ such that
      \begin{equation}
	\label{bound_Wn}
 	\esssup_{t \in [0, T], \, x \in [0,1]}\int_0^1W_{\G_n} (t,x,y)\,dy\leq K_{1} \qquad  \text{for every } n \in \nn.
	\end{equation}
	\end{description}
	
      Recalling the notation \eqref{uG->u01}, assume also that the coefficients $\beta^n,\gamma^n$ satisfy
      \begin{equation}
	\label{e:hpgammbeta}
	0 \leq  \beta^n,
	 \gamma^n  \leq  M \; \text{a.e. on $[0, T] \times [0, 1]$}, \qquad
	\beta^n \to \beta, \; \gamma^n \to \gamma \; 
	\text{strongly in $L^1([0, T] \times [0, 1])$}\,,
\end{equation}
for some constant $M>0$ and some functions $\beta, \gamma \in L^\infty ([0, T]\times [0, 1])$.

If $(s^n, i^n, r^n)$ is the solution of~\eqref{SIR_G},~\eqref{e:idSIR_G} for given initial data $s_0^n,i_0^n, r_0^n$ satisfying \eqref{e:idSIR_G2} for every $n$, then we have up to subsequences
\begin{equation}
\label{conv_C Lw}
(s^n,i^n,r^n) \weaks (s,i,r) \quad \text{weakly$^\ast$ in } 
L^\infty([0, T] \times [0,1];\rr^3)  \;\text{ as }n\to+\infty\,,
\end{equation}
where $(s,i,r): [0, T] \times[0,1]\to[0,1]^3$ is the distributional solution of \eqref{SIR_W}, \eqref{e:idSIR_W} satisfying~\eqref{e:sirbound} and attaining the initial condition~\eqref{e:idSIR_W} for some data $(s_0, i_0, r_0)$ satisfying~\eqref{e:01SIR_W}.
\end{theorem}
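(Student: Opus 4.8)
The plan is to first extract a weak-$\ast$ limit from the uniform bounds, then to upgrade it to convergence at (almost) every fixed time by controlling the time derivatives, and finally to pass to the limit in the quadratic coupling term by combining the cut-norm convergence \eqref{e:conv_tloc_cut} with the smoothing effect of the integral operator associated with $W$. By Lemma~\ref{l:zerouno} and \eqref{e:idSIR_G2} the solutions obey \eqref{e:01SIR_G}, so $(s^n,i^n,r^n)$ is uniformly bounded in $L^\infty([0,T]\times[0,1];\rr^3)$ and, by Banach--Alaoglu, admits a weak-$\ast$ convergent subsequence, which is exactly \eqref{conv_C Lw}. To upgrade this I set $F^n(t,x):=\int_0^1\beta^n(t,y)W_{\G_n}(t,x,y)i^n(t,y)\,dy$, the right-hand side of the $s$-- and $i$--equations of \eqref{SIR_G} read through \eqref{uG->u01}. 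Under \eqref{bound_Wn} one has $0\le F^n\le MK_1$, so $\partial_t(s^n,i^n,r^n)$ is bounded in $L^\infty$; under \eqref{e:bdl2} the Cauchy--Schwarz inequality gives $\|F^n(t,\cdot)\|_{L^2}\le M\|W_{\G_n}(t,\cdot,\cdot)\|_{L^2}$ and hence a bound for $\partial_t(s^n,i^n,r^n)$ in $L^2([0,T]\times[0,1])$. In either case $t\mapsto(s^n,i^n,r^n)(t,\cdot)$ is uniformly equicontinuous into $L^2([0,1])$, so, the sequence being bounded, the Arzel\`a--Ascoli theorem in the relevant weak topology yields, along a further subsequence, convergence of $t\mapsto(s^n,i^n,r^n)(t,\cdot)$ in the space of continuous maps from $[0,T]$ into $L^2([0,1])$ endowed with its weak topology (and even into $L^\infty([0,1])$ with its weak-$\ast$ topology under \eqref{bound_Wn}). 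In particular $(s^n,i^n,r^n)(t,\cdot)$ converges weakly at every $t$; evaluating at $t=0$ and using \eqref{e:idSIR_G2} shows that the limit attains an initial datum $(s_0,i_0,r_0)$, the weak-$\ast$ limit of $(s^n_0,i^n_0,r^n_0)$, which satisfies \eqref{e:01SIR_W} because the constraints $0\le\cdot\le1$ and $s_0+i_0+r_0=1$ are preserved under weak-$\ast$ limits.

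Next I would pass to the limit in the distributional formulation. Multiplying \eqref{SIR_G} by $\varphi\in C^\infty_c([0,T)\times[0,1])$ and using \eqref{uG->u01} together with the step-graphon \eqref{G-->W_G}, each $(s^n,i^n,r^n)$ solves the weak form of \eqref{SIR_W} with $(W,\beta,\gamma)$ replaced by $(W_{\G_n},\beta^n,\gamma^n)$. The time-derivative and initial-datum terms pass to the limit by the previous step, and the recovery term is handled by noting that, since $\gamma^n\to\gamma$ strongly in $L^1$ with $0\le\gamma^n\le M$ and $i^n\weaks i$, one has $\gamma^n i^n\weaks\gamma i$; the same remark applies to the product $\beta^n i^n$.

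The main obstacle is the quadratic coupling term $J^n:=\int_0^T\!\!\int_0^1\!\!\int_0^1\psi^n(t,x)\,W_{\G_n}(t,x,y)\,\phi^n(t,y)\,dx\,dy\,dt$, with $\psi^n:=s^n\varphi$ and $\phi^n:=\beta^n i^n$ uniformly bounded and converging only weak-$\ast$, to $\psi:=s\varphi$ and $\phi:=\beta i$. I would split $J^n=\int_0^T\!\!\int_0^1\!\!\int_0^1\psi^n(W_{\G_n}-W)\phi^n+\int_0^T\!\!\int_0^1\!\!\int_0^1\psi^n W\phi^n$. The first summand is bounded, by the defining property of the cut norm (see \S\ref{sec:graphons}), by $C\int_0^T\|W_{\G_n}(t,\cdot,\cdot)-W(t,\cdot,\cdot)\|_{\Box}\,dt$, which vanishes by \eqref{e:conv_tloc_cut}. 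For the second summand I freeze $W$ and introduce $(L\psi)(t,y):=\int_0^1\psi(t,x)W(t,x,y)\,dx$, bounded from $L^\infty$ into $L^1([0,T]\times[0,1])$. The two hypotheses \eqref{bound_Wn} and \eqref{e:bdl2} are tailored precisely so that, for a.e.\ fixed $(t,y)$, the kernel $W(t,\cdot,y)$ is an admissible test function for the pointwise-in-time weak convergence of $s^n$ established above (weak-$\ast$ in $L^\infty$, resp.\ weakly in $L^2$), whence $(L\psi^n)(t,y)\to(L\psi)(t,y)$ a.e.; since $|L\psi^n|\le C\int_0^1|W(t,x,y)|\,dx\in L^1$, dominated convergence gives $L\psi^n\to L\psi$ strongly in $L^1$. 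Writing $\int_0^T\!\!\int_0^1(L\psi^n)\phi^n=\int_0^T\!\!\int_0^1(L\psi)\phi^n+\int_0^T\!\!\int_0^1(L\psi^n-L\psi)\phi^n$, the first term converges by weak-$\ast$ convergence of $\phi^n$ against $L\psi\in L^1$ and the second is bounded by $\|\phi^n\|_\infty\|L\psi^n-L\psi\|_{L^1}\to0$; hence $J^n$ converges to the coupling term of the limit system \eqref{SIR_W}.

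Collecting these limits shows that $(s,i,r)$ is a distributional solution of \eqref{SIR_W} with initial datum $(s_0,i_0,r_0)$, while \eqref{e:sirbound} follows from \eqref{e:01SIR_G} by weak-$\ast$ closedness of the corresponding constraints. Finally, Proposition~\ref{p:uniex} identifies $(s,i,r)$ as the unique such solution associated with $(s_0,i_0,r_0)$, completing the proof.
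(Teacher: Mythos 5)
Your overall architecture coincides with the paper's proof of Theorem~\ref{THM 1}: the uniform bounds \eqref{e:01SIR_G} give weak-$\ast$ compactness; the time-derivative bounds (in $L^2$ under \eqref{e:bdl2}, in $L^\infty$ under \eqref{bound_Wn}) give equicontinuity in time and hence weak convergence of $(s^n,i^n,r^n)(t,\cdot)$ at (almost) every fixed $t$ — the paper obtains this via a diagonal argument over rational times plus the Jensen-type estimate \eqref{e:jensen2}, you invoke Arzel\`a--Ascoli in $C([0,T];L^2_{\mathrm{weak}})$, which is the same argument packaged differently; and the coupling term is split into a piece controlled by \eqref{cut2} and \eqref{e:conv_tloc_cut} and a piece with the kernel $W$ frozen. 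Where you genuinely differ is in the frozen-kernel piece: the paper proves weak convergence of the product $s^n(t,x)i^n(t,y)$ in $L^2([0,T]\times[0,1]^2)$ (its \eqref{e:weakpr}) and tests it against $W\beta\varphi$, whereas you prove \emph{strong} $L^1$ convergence of $L\psi^n$ (pointwise a.e.\ convergence from the fixed-time weak convergence, dominated by $\|\varphi\|_\infty\int_0^1|W(t,x,y)|\,dx\in L^1$) and then pair it with the weak-$\ast$ convergent $\phi^n=\beta^n i^n$. This strong-times-weak pairing is a clean alternative and, if run with weak-$\ast$ $L^\infty$ testing at fixed $t$, only requires $W\in L^1$.

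There is, however, one unjustified step. Under \eqref{e:bdl2} you declare $W(t,\cdot,y)$ an ``admissible test function'' for the weak-$L^2$ pairing, i.e.\ you use $W(t,\cdot,y)\in L^2(0,1)$ for a.e.\ $(t,y)$; and at the end you invoke Proposition~\ref{p:uniex}, whose hypotheses require of the \emph{limit} graphon either $W\in L^2([0,T]\times[0,1]^2;\rr)$ or \eqref{bound_W}. Neither property is among the hypotheses of Theorem~\ref{THM 1}: the bounds \eqref{e:bdl2}, \eqref{bound_Wn} are imposed on $W_{\G_n}$, while $W$ is merely assumed to be in $L^1$. That these bounds pass to the limit is precisely the content of the paper's Lemmas~\ref{l:stimal2} and~\ref{l:stimabound} (proved by weak $L^2$ compactness, respectively a Lebesgue-point argument, with the limit identified through the cut-norm convergence \eqref{e:conv_tloc_cut}), and your proof needs such an argument somewhere. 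The first occurrence is avoidable: since $\|s^n(t,\cdot)\|_{L^\infty(0,1)}\le 1$, the fixed-time weak $L^2(0,1)$ convergence upgrades automatically to weak-$\ast$ $L^\infty(0,1)$ convergence, so testing against $W(t,\cdot,y)\in L^1(0,1)$ (true for a.e.\ $(t,y)$ by Fubini) suffices. But the appeal to Proposition~\ref{p:uniex} — which you need in order to speak of \emph{the} distributional solution, i.e.\ to identify the limit independently of the extracted subsequence once the limit initial datum is fixed — cannot dispense with it, so you should either prove the analogues of Lemmas~\ref{l:stimal2}--\ref{l:stimabound} or cite them explicitly.
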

Some remarks are in order. 
First, assumption~\eqref{e:conv_tloc_cut}  is entirely natural from the point of view of graphon theory in view of the analysis in~\cite{BCCZ19,BCLSV08}. We refer  to Appendix \ref{sec:graphons} and in particular to Remark~\ref{r:discussion} for a more technical discussion, here we just mention that assumption~\eqref{e:conv_tloc_cut} is satisfied (up to subsequences and vertices relabelling) in two very relevant cases: (i) if $\{ \G_n \}_{n \in \nn}$ is a dense\footnote{We refer to Appendix \ref{sec:graphons} for the definitions of dense and sparse sequence of graphs} sequence of graphs such that $\| W_{\G_n} \|_{L^p([0,T]\times[0,1]^2;\mathbb{R})}$ is uniformly bounded for some $p>1$; (ii) if  $\{ \G_n \}_{n \in \nn}$ is a sparse sequence of graphs satisfying suitable topological assumptions (see Appendix \ref{sec:graphons} for the precise conditions), then~\eqref{e:conv_tloc_cut}  holds true up to renormalization\footnote{Actually, condition~\eqref{e:conv_tloc_cut}  is directly satisfied with no renormalization by sparse graph sequences too, but this is of limited interest since the limit would be the trivial function $W\equiv0$}, i.e. provided we replace $\G_n$ by $\G_n / \| W_{\G_n} \|_{L^1([0,T]\times[0,1]^2;\mathbb{R})}$.

%%%%\COMMENT{\rosso  Io spostarei questo paragrafo a prima della prova....ed il ultimo paragrafo a dopo aver scritto la prova...
  
Second, both conditions~\eqref{e:bdl2} and~\eqref{bound_Wn} pass to the limit, i.e. they are satisfied by the limit graphon $W$ in~\eqref{e:conv_tloc_cut}, see respectively Lemmas~\ref{l:stimal2}-\ref{l:stimabound} below. This implies that the limit system~\eqref{SIR_W} satisfies the assumptions of Proposition~\ref{p:uniex} and in particular that there is a unique solution of the Cauchy problem~\eqref{SIR_W},~\eqref{e:idSIR_W} satisfying~\eqref{e:sirbound}. This in turn ensures that, by fixing the initial data~\eqref{e:idSIR_W} of the limit problem,  then there is no need to pass to subsequences in~\eqref{conv_C Lw}, as the whole sequence converges.

Third, by looking at the proof of Theorem~\ref{THM 1} one realizes that, if $\G_n$ is a sequence of random graphs satisfying~\eqref{e:conv_tloc_cut} and either~\eqref{e:bdl2} or~\eqref{bound_Wn} almost surely, 
then~\eqref{conv_C Lw} holds almost surely.

Finally, Theorem~\ref{THM 1} can be regarded as a \emph{convergence} result: given a sequence of graphs $\{\G_n\}_{n\in\mathbb{N}}$ satisfying~\eqref{e:conv_tloc_cut} for some $W$, we characterize the asymptotic behaviour of the corresponding sequence of SIR models~\eqref{SIR_G}.

In the next results, we somehow adopt the opposite viewpoint: given a graphon $W\in L^1([0, T] \times [0,1]^2;\mathbb{R})$ satisfying suitable conditions and the corresponding system~\eqref{SIR_W}, we look at discrete approximations to  the solution of~\eqref{SIR_W}. Our approach here focuses on the approximation of $W$ by sequences of graphs constructed in such a way that the corresponding solutions of~\eqref{SIR_G} converge to the solution of~\eqref{SIR_W}. 
Obviously, different samplings of $W$ provide different approximating graphs and the actual convergence of the specific approximation to the limit problem depends on the considered discretization.  In this work, we study both deterministic and random approximations. However, before discussing in detail our results in this direction, we stress immediately that all the sampling procedures we take into account retain much more information on the limit graphon $W$ than the rather mild assumption \eqref{e:conv_tloc_cut} required by Theorem \ref{THM 1}. As a consequence, all the convergence results we obtain (see e.g. \eqref{e:cisiamo}-\eqref{e:c0l1} and \eqref{e:convP}-\eqref{e:convP2}  below) hold in a much stronger topology than the one in \eqref{conv_C Lw}.

We now introduce the deterministic approximation we consider. Let $n \in \mathbb N$, recall~\eqref{e:ij} 
and set
\begin{equation}
\label{e:average}
\begin{split}
     & < u>_{I^n_j} : =  n \int_{I_j^n} u(x) dx \quad \text{for every }j=1, \dots, n,\qquad\forall u \in L^1(0, 1), \\ 
     & < U>_{I^n_j\times I^n_k} : =  n^2 \iint_{I_j ^n\times I_k^n} U(x) dx  \quad \text{for every }j, k=1, \dots, n,\qquad\forall U \in L^1([0, 1]^2). 
      \end{split}
\end{equation}
Given a possibly time-dependent graphon $W$,  for every $n \in \mathbb{N}$ we then let $\G_n$ be the  possibly time-dependent graph with $n$ vertices and adjacency matrix 
\begin{equation} \label{e:adj}
    A^n_{jk} (t): =  < W(t, \cdot, \cdot)>_{I_j^n\times I_k^n}. 
\end{equation}
We can now state our first approximation result.

\begin{theorem}
		\label{thm:det} 
		Fix $T>0$ and assume $W \in L^1([0, T] \times [0,1]^2; \mathbb{R}_+)$ is a non-negative time-dependent graphon. Let $\beta,\gamma\in L^\infty\left([0, T]\times[0,1]\right)$ and $s_0,i_0,r_0\in L^\infty(0,1)$ satisfy $\beta, \gamma \ge 0$ 
	and~\eqref{e:01SIR_W}, respectively. 
	Set 
		\begin{equation}
		 \label{e:betagamma}
		 \begin{split}
		\beta^{n}_j(t):= < \beta(t, \cdot)>_{I_j^n} , \quad  \gamma^{n}_j(t): =< \gamma (t, \cdot)>_{I_j^n} \quad \text{for a.e. $t \in \rr_+$ and every $j=1, \dots, n$}    \\
		s^n_{j,0} : = < s_0>_{I^n_j}, \quad  i^n_{j,0} : = < i_0>_{I^n_j}, \quad r^n_{j,0} : = < r_0>_{I^n_j} 
		 \quad \text{for every $j=1, \dots, n$}
		\end{split}
		\end{equation}
		and let $\G_n$ be the graph with adjacency matrix $(A^n_{jk})_{j, k=1, \dots, n}$ defined in~\eqref{e:adj}. Let $(s^n, i^n, r^n)$ be the solution of the Cauchy problem~\eqref{SIR_G}, \eqref{e:idSIR_G} written using the notation~\eqref{uG->u01}. Then,  provided $(s, i, r)$ is the distributional solution of \eqref{SIR_W}, \eqref{e:idSIR_W} satisfying~\eqref{e:sirbound}, the following results hold:
		\begin{description}
		\item[$\bullet$ {\sc Case 1}] if $W \in L^2 ([0, T] \times [0, 1]^2; \mathbb{R}_+)$ then 
				\begin{equation}
\label{e:cisiamo}
    \lim_{n \to + \infty} \esssup_{t \in [0, T]}\left(
      \|s (t, \cdot) -  s^n (t, \cdot)  \|_{L^2(0, 1)} 
      +  \|i (t, \cdot) -  i^n (t, \cdot)  \|_{L^2(0, 1)} + 
     \|r (t, \cdot) - r^n (t, \cdot)  \|_{L^2(0, 1)} \right)=0;
\end{equation}
        
        \item[$\bullet$ {\sc Case 2}] if  $W$ satisfies ~\eqref{bound_W} then
         \begin{equation} \label{e:c0l1}
		 \lim_{n \to + \infty} \esssup_{t \in [0, T]}\left(
      \|s (t, \cdot) -  s^n (t, \cdot)  \|_{L^1(0, 1)} 
      +  \|i (t, \cdot) -  i^n (t, \cdot)  \|_{L^1(0, 1)} + 
     \|r (t, \cdot) - r^n (t, \cdot)  \|_{L^1(0, 1)} \right)=0.
		 		\end{equation}
				% provided $(s, i, r)$ is the distributional solution of \eqref{SIR_W}, \eqref{e:idSIR_W} satisfying~\eqref{e:sirbound}. 
				 \end{description}
				 
        \end{theorem}
Note that with the sampling procedure defined by~\eqref{e:adj} and~\eqref{e:betagamma} the resulting discretization  coincides with a discontinuous piecewise constant Galerkin approximation to \eqref{SIR_W}. In particular, if for instance $\beta^n_j$ is as in~\eqref{e:betagamma} and we define $\beta^n$ using the notation~\eqref{uG->u01}, then $\beta^n$ is exactly the $L^2$-orthogonal projection of $\beta$ onto the space of  piecewise constant functions on the uniform partition of $[0,1]$ of granularity $1/n$. Similarly, if $W(t, \cdot) \in L^2 ([0, 1]^2)$ the matrix $(A_{jk}^n(t))_{1\leq j,k\leq n}$ in~\eqref{e:adj} coincides with the $L^{2}$-orthogonal projection of the graphon $W(t, \cdot)$ onto the space of piecewise constant functions on the uniform cartesian grid of $[0,1]^{2}$. 

We now introduce our second sampling procedure, which belongs to the family of the so called $W$-random graphs introduced  in \cite{diaconisFreedman81}, popularized by \cite{LS} and then widely used in the context of graphon theory. 
Inspired by the construction given in~\cite{Med22,Med19}, for any graphon $W \in L^1 ([0, 1]^2;\mathbb{R}_+)$  and for $\alpha \in\, ]0,1[$ fixed, we introduce  the {\it scaled sparse $W$-random graph} $n^{\alpha}\mathcal{G}(n , W, n^{-\alpha})$ as the graph with $n$ nodes and adjacency matrix given by
\begin{equation}
        \label{e:adjr1} \begin{split}
              \mathbb P( A^n_{jk} = n^\alpha  ) = < \widehat W^n  >_{I^n_j \times I^n_k}, \; \text{where $\widehat W^n : = \min\{1, n^{-\alpha} W \}$}\,,\qquad 
             \mathbb P( A^n_{jk} = 0 ) =1 -  \mathbb P( A^n_{jk} = n^\alpha )\,.  
              \end{split} 
        \end{equation}
        The scaling factor $n^{-\alpha}$ is the so called target edge density of the graph and it does enter in the definition \eqref{e:adjr1} of the matrix probability distribution. On the one hand, it dictates the level to trim the unbounded graphon $W$. On the other hand, it allows to ensure that the cut-off in the minimum acts only on part of the edges.
        
       In the case of bounded graphons $W \in L^{\infty} ([0, 1]^2;[0,1])$ it is standard to choose $\alpha =0$, so that the adjacency matrix of the resulting graph boils down to a random matrix with entries following the Bernoulli distribution
	\begin{equation}\label{ran:infty}
		\mathbb P( A^n_{jk} = 1  ) =  < W  >_{I^n_j \times I^n_k}\,, \; \, \qquad 
		\mathbb P( A^n_{jk} = 0 ) =1 - < W  >_{I^n_j \times I^n_k}\,.
	\end{equation}
	Such sparse $W$-random graph is denoted here by $\mathcal{G}(n , W)$.  

\begin{remark}\label{remark:bla0}
The construction considered  in~\cite{Med22, Med19} (and more generally the definition of the so-called {\it sparse $W$-random graph} in the theory of graphons) is slightly different from \eqref{e:adjr1}.   In~\cite{Med22, Med19} the {\it sparse $W$-random graph} $\mathcal{G}(n,W,n^{-\alpha})$ has adjacency matrix  
	\begin{equation}\label{e:revisione}
	     \mathbb{P}(A^n_{jk} =1) =  < \widehat W^n  >_{I^n_j \times I^n_k}\,, \; \text{where $\widehat W^n : = \min\{1, n^{-\alpha} W \}$}, \qquad 
    \mathbb{P}(A^n_{jk} =0) = 1 -  < \widehat W^n  >_{I^n_j \times I^n_k}. 
	\end{equation}	 
Here, we consider directly the {\it scaled} sparse $W$-random graph $n^{\alpha}\mathcal{G}(n,W,n^{-\alpha})$ as defined in \eqref{ran:infty} since that is the graph capable of approximating the graphon $W$. 
\end{remark}

We now state the main results for the random approximation to \eqref{SIR_W} given by the scaled sparse $W$-random graphs $n^\alpha\mathcal{G}(n,W,n^{-\alpha})$ as random samplings of $W$. 
 \begin{theorem}
      \label{thm:ran} 
      Fix $T>0$ and let $W \in L^1([0,1]^2;\mathbb{R}_+)$ be a graphon and $\gamma, \beta \in L^\infty([0, T] \times [0, 1])$ satisfy $\beta, \gamma \ge 0$.  Let $\alpha \in\, ]0,1[$ be fixed. For every $n \in \mathbb N$ and $j=1, \dots, n$, let $\beta^n_j$, $\gamma^n_j$, $s^n_{j,0}, i^n_{j,0}, r^n_{j,0}$ be as in \eqref{e:betagamma} and $\G_n=n^{\alpha}\mathcal{G}(n,W,n^{-\alpha})$  be the scaled sparse $W$-random graph given by ~\eqref{e:adjr1}.  Let $(s^n, i^n, r^n)$ be the solution of the Cauchy problem~\eqref{SIR_G}, \eqref{e:idSIR_G} written by using the notation~\eqref{uG->u01}. Then the following results hold:
      \begin{description}
      \item[$\bullet$ {\sc Case 1}] if $W \in L^2 ([0, 1]^2; \mathbb{R}_+)$ and $\alpha \in\, ]0,1/2[$ then 
    \begin{equation}\begin{split}
\label{e:convP}
    \lim_{n \to + \infty}  \esssup_{t \in [0, T]} & \left( 
      \|s (t, \cdot) -  s^n (t, \cdot)  \|_{L^2(0, 1)} 
      +  \|i (t, \cdot) -  i^n (t, \cdot)  \|_{L^2(0, 1)}  \right. \\
      & \left. \qquad + 
     \|r (t, \cdot) -  r^n (t, \cdot)  \|_{L^2(0, 1)}  \right) =0
     \qquad \text{almost surely};
     \end{split}
\end{equation}
 \item[$\bullet$ {\sc Case 2}] if there is a constant $K_a>0$ such that 
         \begin{equation}
        \label{bound_W2}
			  \esssup_{x \in [0,1]}\int_0^1W (x,y)\,dy\leq K_a\;,
		\end{equation} 
		 then, for $\alpha \in\, ]0,1[$,
 \begin{equation}\begin{split}
\label{e:convP2}
   \lim_{n \to + \infty} \esssup_{t \in [0, T]} & \left(
      \|s (t, \cdot) -  s^n (t, \cdot)  \|_{L^1(0, 1)} 
      +  \|i (t, \cdot) -  i^n (t, \cdot)  \|_{L^1(0, 1)} \right. \\
      & \qquad \left. + 
     \|r (t, \cdot) - r^n (t, \cdot)  \|_{L^1(0, 1)} \right) =0
   \quad 
  \text{almost surely}.  \end{split}
\end{equation}
 \end{description}
\end{theorem} 
We wish to note (as will be evident from the proof of  Theorem~\ref{thm:ran}) that  if  $W \in L^{\infty} ([0, 1]^2;[0,1])$, the above theorem applies with  $\G_n $ given by the sparse random graph $\mathcal{G}(n , W)$ as defined in \eqref{ran:infty}. 
 
Prior to close this introduction we comment on the main novelties of our approximation results. The proof of Theorems~\ref{thm:det}--\ref{thm:ran}  borrow some of the techniques from the analysis in~\cite{Med19} concerning the limit of Kuramoto systems and nonlinear/nonlocal diffusion defined on graphs (see also~\cite{Med14a,Med14b,Med14CORR} and~\cite{BCD20} for a different but related problem). However, with respect to previous analyses \cite{KVM17, Med19, Med20a,Med22}, the specific features of the SIR system allows here to obtain the following improvements:
\begin{itemize}
\item[(i)] we relax the regularity assumptions on $W$, namely we require only $W \in L^2 ([0, 1]^2;\mathbb{R}_+)$ (cf. {\sc Case 1}). This in particular implies that Theorems \ref{thm:det}--\ref{thm:ran}  (contrary to ~\cite[Theorem 3.1]{Med19} and the main results in \cite{Med20a,Med22}) directly apply to the power law case $W(x, y) = (xy)^{-\mu}$, $\mu \in\, ] 0, 2^{-1}[$. As pointed out for instance in the introduction to~\cite{BCCG21, BCCZ19} (see also the one in  \cite{Med20a}), the power law case is usually considered technically challenging but also fascinating and compelling in view of applications, as several real-world networks have a power law structure. 
From the technical standpoint, when $W$ is an $L^2$ graphon we circumvent condition~\eqref{bound_W} by  a careful manipulation of the particular structure of the SIR system. 
Furthermore, unlike in~\cite{Med19,Med20a,Med22}, we can also go below the $L^2$ summability framework (cf.{\sc Case 2}), provided we adopt~\eqref{bound_W} (resp.  ~\eqref{bound_W2} in the random case). Note that~\eqref{bound_W} (resp.  ~\eqref{bound_W2} in the random case) is for instance satisfied by the graphons $W(x, y) = (x+y)^{-\mu}$, $\mu \in ]0, 1[$, which do not belong to $L^2([0,1]^2;\mathbb{R})$ if $\mu \ge 2^{-1}$;

\item[(ii)] for the deterministic approximation in Theorem~\ref{thm:det}, we allow for time-dependent graphons. This might be particularly relevant in view of applications because it allows to consider changes in the network structure (e.g. due to the implementation of lockdown measures).
However, with the random approximation in Theorem~\ref{thm:ran} our results are valid only for time independent graphons. This limitation is technical, mainly due to measurability issues on the integrals with respect to time of certain stochastic processes arising in the computations. Hence, at present we are not able to adapt the techniques used in the proof of Theorem~\ref{thm:ran} to deal with time dependent graphons $W$.
\end{itemize}

\subsection*{Paper outline}
The remainder of the paper is organized as follows. In \S\ref{sec:limit} we collect some preliminary  results. In \S\ref{sec:cutconv} we discuss the proof of Proposition~\ref{p:uniex} and of Theorem \ref{THM 1}.  \S\ref{s:det} contains  the proof of Theorem~\ref{thm:det} for the deterministic approximation, while in \S\ref{sec:proof2} we present the proof of Theorem~\ref{thm:ran} for the random approximation. The paper is completed with two appendices.  Appendix \ref{sec:notation} collects the main notation used in the paper, whereas Appendix \ref{sec:graphons} comprises a brief overview of some definitions and results on graphons (providing in particular sufficient conditions for \eqref{e:conv_tloc_cut} to hold).

\subsection*{Norms notation.} In the remainder of the paper, symbols like $\|u\|_p$ will be used to denote the $L^p$ norm of $u$ when computed with respect to all the variables $u$ depends on. Conversely, when the $L^p$ norm of $u$ is computed only with respect to some of its variables, we will use the full notation indicating explicitly the space of integration.

\section{Preliminary results}
\label{sec:limit}
In this section we collect some subsidiary results that will be useful  for the subsequent analysis. Precisely, in \S\ref{subsec:21} we prove existence and uniqueness of the solution for the discrete SIR system \eqref{SIR_G}, whereas in \S\ref{subsec:22} we establish a priori estimates for distributional solutions of the continuum SIR system \eqref{SIR_W}. In \S\ref{subsec:23} we show how conditions \eqref{e:bdl2} and \eqref{bound_Wn} for a sequence of graphons $W_n$ imply corresponding conditions on the limit graphon $W$. In \S\ref{subsec:24} we recall a standard convergence results for integral means that will be frequently used in the following.

\subsection{The discrete SIR model on graphs}
\label{subsec:21}
The following lemma is a direct consequence of the Cauchy Lipschitz Picard Lindel\"of Theorem on existence and uniqueness of solutions of ODEs. 
In the statement we use the notation~\eqref{uG->u01}.  
\begin{lemma}
	\label{lem:wpos_Gn}
	Fix $n \in \nn$  and assume $\beta^n_j, \gamma^n_j$, $A^n_{jk} \in L^\infty (\rr_+)$ satisfy $\beta^n_j, \gamma^n_j , A^n_{jk} \ge 0$ a.e. on $\rr_+$, for every $j, k=1, \dots, n$. For every given initial condition~\eqref{e:idSIR_G} satisfying~\eqref{e:idSIR_G2} there are $\nu>0$ and a 
	 unique solution ${s}^n, i^n, {r}^n: [0, \nu]  \times [0, 1]\to \rr$ of the Cauchy problem~\eqref{SIR_G}, \eqref{e:idSIR_G}.
\end{lemma}
Note that the above lemma only provides local-in-time existence and uniqueness. However, the local-in-time solution can be extended for every $t \ge 0$ and hence it is actually a global in-time-solution ${s}^n, i^n, {r}^n: \rr_+  \times [0, 1]\to \rr$. This is a consequence of the following lemma. The proof is standard, but we provide it for the sake of completeness.
\begin{lemma}
\label{l:zerouno}
Under the assumptions of Lemma~\ref{lem:wpos_Gn}, the solution of the Cauchy problem~\eqref{SIR_G}, \eqref{e:idSIR_G} satisfies~\eqref{e:01SIR_G}. 
\end{lemma}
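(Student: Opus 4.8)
The plan is to first establish the conservation identity, then prove nonnegativity of each component, and finally recover the upper bounds together with global existence. Summing the three equations of \eqref{SIR_G} for a fixed index $j$, the source terms cancel and we get $\frac{d}{dt}(s_j^n+i_j^n+r_j^n)=0$; combined with \eqref{e:idSIR_G2} this yields $s_j^n(t)+i_j^n(t)+r_j^n(t)\equiv1$, which is the conservation part of \eqref{e:01SIR_G}. In particular, once all three components are shown to be nonnegative, this normalization automatically forces each of them into $[0,1]$, so the whole statement reduces to proving $s_j^n,i_j^n,r_j^n\ge0$.

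For $s_j^n$ this is immediate. Writing $F_j(t):=\frac1n\sum_{k=1}^n\beta_k^n(t)A_{jk}^n(t)i_k^n(t)$, the first equation reads $\dot s_j^n=-s_j^n F_j$, so the explicit representation $s_j^n(t)=s_{j,0}^n\exp\bigl(-\int_0^t F_j\,d\tau\bigr)\ge0$ holds \emph{regardless of the sign} of $F_j$, using only $s_{j,0}^n\ge0$. The genuine difficulty lies with $i_j^n$: its equation carries the source term $s_j^n F_j$, and since $F_j$ couples all the $i_k^n$ together, one cannot argue componentwise, because the sign of $F_j$ is not controlled until we already know that every $i_k^n\ge0$.

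To overcome this I would run a \emph{simultaneous} Gronwall estimate on the aggregate negative part $\phi(t):=\sum_{j=1}^n (i_j^n(t))^-$, where $(u)^-:=\max(-u,0)$. Fix a compact subinterval $[0,T_0]$ of the maximal interval of existence provided by Lemma~\ref{lem:wpos_Gn}; there the absolutely continuous solution is bounded, say $0\le s_j^n\le R$, and the coefficients are bounded by some constant $L$. Using $\frac{d}{dt}(i_j^n)^-=-\dot i_j^n\,\mathbbm{1}_{\{i_j^n<0\}}$ a.e.\ and the equation for $i_j^n$, the decay term contributes $-\gamma_j^n (i_j^n)^-\le0$, while in $-s_j^n F_j\,\mathbbm{1}_{\{i_j^n<0\}}$ the only unfavorable contribution comes from the negative parts $(i_k^n)^-$: exploiting $s_j^n\ge0$ and $\beta_k^n,A_{jk}^n\ge0$ we obtain $\frac{d}{dt}(i_j^n)^-\le s_j^n\frac1n\sum_k\beta_k^n A_{jk}^n (i_k^n)^-\le RL^2\phi(t)$. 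Summing over $j$ gives $\dot\phi\le nRL^2\phi$, and since $\phi(0)=0$ by \eqref{e:idSIR_G2}, Gronwall's inequality forces $\phi\equiv0$ on $[0,T_0]$, that is $i_j^n\ge0$. Nonnegativity of $r_j^n$ then follows at once from $\dot r_j^n=\gamma_j^n i_j^n\ge0$ and $r_{j,0}^n\ge0$.

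Combining nonnegativity with the conservation identity yields $0\le s_j^n,i_j^n,r_j^n\le1$ on $[0,T_0]$; since this a priori bound is uniform in $T_0$, the solution cannot blow up, the maximal interval is all of $\rr_+$, and \eqref{e:01SIR_G} holds for every $t$. The main obstacle is exactly the coupling through $F_j$ in the $i$-equation, which rules out a naive componentwise sign argument; the device of estimating the total negative part $\phi$ and closing a \emph{linear} Gronwall inequality is what allows positivity to propagate simultaneously across all vertices.
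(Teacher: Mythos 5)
Your proof is correct, and it reaches the only genuinely delicate point — nonnegativity of the $i_j^n$ — by a different mechanism than the paper. The paper first perturbs the adjacency matrix ($A_{jk}^n \mapsto A_{jk}^n + \varepsilon$, invoking continuous dependence on coefficients) so that all weights are strictly positive, and then runs a continuity/contradiction argument: it sets $\bar t$ equal to the supremum of times up to which all $i_j^n$ remain positive and excludes $\bar t < +\infty$ by a case analysis (either every component vanishes at $\bar t$, which backward uniqueness shows would force the infection to vanish identically from time $0$, or some $i_k^n(\bar t)>0$, which through strict positivity of $A_{jk}^n$ forces $d i_1^n/dt > 0$ at $\bar t$, contradicting the definition of $\bar t$); a second case analysis handles initial data that are not strictly positive. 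Your Gronwall estimate on the aggregate negative part $\phi=\sum_j (i_j^n)^-$ replaces all of this at once: it needs no $\varepsilon$-perturbation, no case splitting on the initial datum, and no pointwise derivative evaluation at a single time — which is in fact slightly delicate in the paper's setting, since the coefficients are only $L^\infty$ in time, the ODE is of Carath\'eodory type, and derivatives exist only a.e.; your a.e.\ chain rule for the negative part of an absolutely continuous function is exactly the right tool there. What the paper's argument buys instead is a sharper qualitative picture (instantaneous propagation of strict positivity through a fully connected perturbed graph), but as a proof of the stated lemma your route is more direct and more robust. The remaining pieces — conservation of $s_j^n+i_j^n+r_j^n$, nonnegativity of $s_j^n$ via the explicit exponential formula (equivalent to the paper's uniqueness argument), monotonicity of $r_j^n$, and the deduction of global existence from the resulting a priori bounds — coincide in substance with the paper's.
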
 

\begin{proof}
The equality in~\eqref{e:01SIR_G} follows from the fact that by adding the three lines of~\eqref{SIR_G} we get that the derivative of $ s^n_{j} (t)+ i^n_{j}  (t) + r^n_{j} (t)$ vanishes. 

Next, we point out that $s^n_j(t) =0$ for every $t \in \mathbb R$ is a solution of the equation at the first line of~\eqref{SIR_G}. By the uniqueness part of the Cauchy Lipschitz Picard Lindel\"of Theorem, this implies that, since $s^n_{j,0} \ge 0$, then $s^n_j(t) \ge 0$ for every $t$. 
We now prove that $i^n_j (t) \ge 0$, for every $j=1, \dots, n$. As a first step, we establish the proof under the further assumption
\begin{equation} \label{e:anjkmz}
A^n_{jk}>0, \quad \text{for every $j, k=1, \dots, n.$}
\end{equation}
We separately consider the two possible instances:

\smallskip
{\sc Instance (a):} $i^n_{j, 0}>0$ for every $j=1, \dots, n$.  We set   
$$
   \bar t : = \sup  \{ t \in \mathbb R_+ \! \!: i_j^n (\tau) >0 \; \text{for every} \, j=1, \dots, n \; \text{and every $\tau \in [0, t[$} \}.
$$
We point out that $\bar t >0$ and we now show that $\bar t = + \infty$. Assume by contradiction that $\bar t < + \infty$ 
and, just to fix the ideas, assume that $i_1^n (\bar t) =0$. Note that, in this case, it must be $s_1^n(\bar t)>0$. Indeed, if it were $s_1^n(\bar t)=i_1^n(\bar t)=0$, then $r_1^n(\bar t)=1$. However, $(0,0,1)$ is an equilibrium for the component $(s_1^n,i_1^n,r_1^n)$ (as for any other component), so that it would follow $s_{1,0}^n=i_{1,0}^n=0$, $r_{1,0}^n=1$, contradicting the assumption $i_{1,0}^n>0$. Hence, $s_1^n(\bar t)>0$.  Then there are two possibilities:
\begin{itemize}
\item[(i)] $i^n_2 (\bar t) = \dots = i^n_n (\bar t)=0$. Since $(s^n_1, \dots, s^n_n, 0, \dots, 0, r^n_1, \dots, r^n_n)$ is an equilibrium for~\eqref{SIR_G} for every $(s^n_1, \dots, s^n_n, r^n_1, \dots, r^n_n)$, this implies $i^n_1 (0) = \dots = i^n_n (0)=0$, which contradicts the definition of {\sc instance (a)}. 
\item[(ii)] there is $k$ such that $i^n_k (\bar t) >0$. Since  $A^n_{jk} > 0$ and $s_1^n(\bar t)>0$, this implies that $d i^n_1/dt >0$ at $t= \bar t$ and again contradicts the definition of $\bar t$. 
\end{itemize}

\smallskip
{\sc Instance (b):} there is $j=1, \dots, n$ such that $i^n_{j, 0}=0$. Note that, if $s_{j,0}^n=i_{j,0}^n=0$, then $r_{j,0}^n=1$, $(s_j^n(t),i_j^n(t),r_j^n(t))=(0,0,1)$ for every $t$ and the $j$-th component does not interact with any other component in the system. Therefore, without loss of generality we can restrict our attention only to the components $j$ for which $s_{j,0}^n>0$. Just to fix the ideas, we assume $i^n_{1, 0} =0$. If $i^n_{2, 0}= \dots = i^n_{n, 0}=0$ then by arguing as in item (i) above we conclude that $i^n_1 (t) = \dots = i^n_n (t)=0$ for every $t$, which in particular yields our claim. If there is $k$ such that $i^n_{k,0} >0$, then  $d i^n_j /dt >0$ at $t= 0$ for every $j$ for which $i_{j,0}^n=0$ (since $A_{jk}^n>0$ and $s_{j,0}^n>0$ by assumption). This implies that $i^n_{j} (t)>0$ for every $j=1, \dots, n$ and  $t \in\, ]0, \sigma[$ for some sufficiently small $\sigma>0$. We can then consider the Cauchy problem obtained by coupling~\eqref{SIR_G} with the datum assigned at $\sigma/2$ and apply the same argument as in the previous case. 

To conclude the proof of the inequality $i^n_j \ge 0$ we are left to remove the assumption~\eqref{e:anjkmz}. To this end, we rely on an approximation argument: we fix $\ee >0$, replace $A^n_{jk}$ with $A^n_{jk}+ \varepsilon$, for every $j, k=1, \dots, n$, and term $s^{n \ee}_j(t), i^{n \ee}_j (t), r^{n \ee}_{j} (t)$ the corresponding solution of the Cauchy problem~\eqref{SIR_G}, \eqref{e:idSIR_G}. Since $A^n_{jk}+ \varepsilon>0$, by the previous step $i^{n \ee}_j (t) \ge0$ for every $t$ and every $\ee>0$. By the continuous dependence of the solution of ODEs on parameters, $i^{n \ee}_j (t)$ converges to $i^n_j (t)$ as $\ee \to 0^+$ and  this implies $i^n_j (t)\ge 0$. 

Finally, since for every $j=1, \dots, n$ we have $\gamma^n_j \ge 0$ and $i^n_j \ge 0$, then $d r^n_j/dt \ge0$ and this yields $r^n_j \ge 0$.
\end{proof}

\subsection{A priori estimates for distributional solutions of~\eqref{SIR_W}}
\label{subsec:22}
We now derive a preliminary result which will be useful in the next section to establish Proposition~\ref{p:uniex}. 
\begin{lemma} \label{l:reg}
Fix $T>0$, $W\in L^1\left([0, T] \times[0,1]^2;\mathbb{R}\right)$ and $\beta, \gamma \in L^\infty ([0, T] \times [0, 1])$.
Let $(s, i, r) \in L^\infty ([0,T] \times [0, 1]; \mathbb{R}^3)$ be a distributional solution of~\eqref{SIR_W} satisfying~\eqref{e:sirbound}. Then, either:\\

$\bullet$ {\sc Case 1}: if $W\in L^2 \left([0, T] \times[0,1]^2;\mathbb{R}\right)$, then $\partial_t s, \partial_t i, \partial_t r \in 
 L^2 \left([0, T] \times[0,1] \right)$ and 
 \begin{equation}
 \label{e:boundder}
          \| \partial_t s \|_{2},    \| \partial_t i \|_{2},    \| \partial_t r \|_{2}
          \leq C( \| W \|_{2}, \| \beta \|_{\infty}, \| \gamma \|_{\infty}),
           \end{equation}
          where $C( \| W \|_{2}, \| \beta \|_{\infty}, \| \gamma \|_{\infty})$ is a suitable constant only depending on $\| W \|_{2}, \| \beta \|_{\infty}$ and $\| \gamma \|_{\infty}$; or
          
$\bullet$ {\sc Case 2}: if~\eqref{bound_W} holds true, then $\partial_t s, \partial_t i, \partial_t r \in 
 L^\infty \left([0, T] \times[0,1] \right)$  and        
\begin{equation}
 \label{e:boundder2}
          \| \partial_t s \|_{\infty} ,    \| \partial_t i \|_{\infty} ,    \| \partial_t r \|_{\infty}
          \leq C( K_d, \| \beta \|_{\infty}, \| \gamma \|_{\infty})\,,
           \end{equation}
  where $C( K_d, \| \beta \|_{\infty}, \| \gamma \|_{\infty})$ is a suitable constant only depending on $K_d, \| \beta \|_{\infty}$ and $\| \gamma \|_{\infty}$.          
\end{lemma}
\begin{proof}
Let $(s,i, r)$ be a distributional solution of~\eqref{SIR_W} satisfying~\eqref{e:sirbound}. Owing to~\eqref{eq:weakLinfty}, the distributional derivatives $(\partial_t s, \partial_t i, \partial_t r)$ are given by the right hand side of~\eqref{SIR_W} and hence are locally summable functions.
We now separately consider the following two cases.\\
{\sc Case 1:} $W\in L^2 \left([0, T] \times[0,1]^2;\mathbb{R}\right)$. 
By using Jensen's inequality we get  
\begin{equation*}
\begin{split}
      & \int_0^T \int_0^1  [ \partial_t s (t, x)]^2 dx dt    =  \int_0^T \int_0^1 \left( -s(t,x)
     \displaystyle{\int_0^1\beta(t,y)W(t,x,y)i(t,y)\,dy} \right)^2 dx dt \\&
     \qquad \qquad  \leq \| s \|^2_{\infty} \|\beta \|_{\infty}^2 \| i \|^2_{\infty} 
      \int_0^T  \int_0^1 \left(
     \displaystyle{\int_0^1W(t,x,y) dy} \right)^2 dx  dt \\ &\stackrel{\text{Jensen}}{\leq} 
     \| s \|^2_{\infty} \|\beta \|_{\infty}^2 \| i \|^2_{\infty} 
      \int_0^T  \int_0^1 \int_0^1W^2(t,x,y) dy dx dt \stackrel{\eqref{e:sirbound}}{\leq} 
        \|\beta \|_{\infty}^2 
       \int_0^T \int_0^1 \int_0^1W^2(t,x,y) dy dx dt\,,
\end{split}
\end{equation*}
which yields a control on the first term in~\eqref{e:boundder}, and relying on a similar argument we control also the other terms in~\eqref{e:boundder}. \\
{\sc Case 2:} $W$ satisfies \eqref{bound_W}. We get  
\begin{equation*}
      |\partial_t s(t, x) |  = \left| - s(t,x) \int_0^1\beta(t,y)W(t,x,y)i(t,y)\,dy\right|  \stackrel{\eqref{e:sirbound}}{\leq}  
         \|\beta\|_{\infty}  \left| \int_0^1W(t,x,y) \,dy\right| \stackrel{\eqref{bound_W}}{\leq}  
         K_d    \|\beta\|_{\infty}. 
\end{equation*}
By using an analogous argument we control the other terms in~\eqref{e:boundder2}. 
\end{proof}
\subsection{Limit conditions coming from~\eqref{e:bdl2} and~\eqref{bound_Wn}}
\label{subsec:23}
\begin{lemma}\label{l:stimal2}
{\sc Case 1.} Assume~\eqref{e:conv_tloc_cut} and~\eqref{e:bdl2}.  Then the limit graphon satisfies $\| W \|_{ L^2 ([0, T] \times [0, 1]^2;\mathbb{R})} \leq K_0$.
\end{lemma}
\begin{proof}
Owing to~\eqref{e:bdl2}, we have that, as $n\to+\infty$ and up to subsequences, $W_{\G_n} \rightharpoonup V$ weakly in $L^2([0, T] \times [0, 1]^2)$, for some limit function $V \in  L^2([0, T] \times [0, 1]^2)$ with $\|V\|_2\leq K_0$ by weak lower semicontinuity. To conclude, we have to show that $V \equiv W$. 
By the uniqueness of the distributional limit, it suffices to show that 
\begin{equation}\label{limitemis2}
    \lim_{n \to + \infty} \int_0^T  \iint_{[0, 1]^2}  \xi(t, x, y) [W_{\G_n} - W] (t, x, y) dx dy dt=0
\end{equation}
for every test function $\xi \in C^\infty_c (]0, T[ \times ]0, 1[^2)$. By the Stone-Weierstrass Theorem, to verify~\eqref{limitemis2} it suffices to show that for every $\eta~\in~C^\infty ([0, T])$, $\varphi, \psi \in C^\infty ([0, 1])$ we have 
\begin{equation}\label{limitemis}
    \lim_{n \to + \infty} \int_0^T \eta (t) \iint_{[0, 1]^2} \varphi(x) \psi(y) [W_{\G_n} - W] (t, x, y) dx dy dt=0. 
\end{equation}
To this end, we point out that 
\begin{equation*}
\begin{split}
        & \left| \! \int_0^T\!  \eta (t) \iint_{[0, 1]^2} \! \! \! \varphi(x) \psi(y) [W_{\G_n} - W] (t, x, y) dx dydt \right|  \leq 
          \int_0^T \! \left| \eta (t)\!  \iint_{[0, 1]^2} \! \! \! \varphi(x) \psi(y) [W_{\G_n} - W] (t, x, y) dx dy \right|  dt\\& \leq 
          \| \eta \|_{C^0}   \int_0^T \left|  \iint_{[0, 1]^2} \varphi(x) \psi(y) [W_{\G_n} - W] (t, x, y) dx dy \right| dt \\ &=
            \| \eta \|_{C^0}  \| \varphi \|_{C^0}  \| \psi \|_{C^0}  \int_0^T
         \left| \iint_{[0, 1]^2} \frac{\varphi(x)}{ \| \varphi \|_{C^0}}\frac{\psi(y)}{ \| \psi \|_{C^0}} [W_{\G_n} - W] (t, x, y) dx dy \right| dt   \\&
        \leq 4   \| \eta \|_{C^0}  \| \varphi \|_{C^0}  \| \psi \|_{C^0}  \int_0^T
          \| [W_{\G_n} - W] (t, \cdot, \cdot)  \|_{\Box} dt\,,
\end{split}
\end{equation*}
where the last step is a consequence of the properties \eqref{cut2} of the cut norm recalled in Appendix \ref{sec:graphons}. By~\eqref{e:conv_tloc_cut}, this yields~\eqref{limitemis} and concludes the proof of the lemma. 
\end{proof}
\begin{lemma} \label{l:stimabound}
{\sc Case 2.} Assume~\eqref{e:conv_tloc_cut} and~\eqref{bound_Wn}. Then the limit graphon $W$ satisfies~\eqref{bound_W} with $K_d=K_1$.
\end{lemma}
\begin{proof}
For a.e. $t \in [0, T],$ we have $W(t, \cdot, \cdot) \in L^1([0, 1]^2)$ and, owing to~\eqref{e:conv_tloc_cut} and up to subsequences, 
\begin{equation} \label{e:pointwise}
      \lim_{n \to + \infty} \left\| W_{\G_n}(t, \cdot, \cdot)-W(t, \cdot, \cdot)\right\|_{\Box} =0.
\end{equation}
For any such $t$, we fix $\hat x \in\, ]0, 1[$ Lebesgue point for the map $x \mapsto \displaystyle{\int_0^1 W(t, x, y) dy}$, which belongs to $L^1(0, 1)$. We fix $\ee < \min \{ \hat x, 1- \hat x \}$ and point out that, by combining~\eqref{e:pointwise} with the definition of cut norm~\eqref{cut1} we have 
\begin{equation} \label{e:Kee}
          \int_{\hat x-\ee}^{\hat x+ \ee} \int_0^1 W (t, x, y) dy dx =
          \lim_{n \to + \infty} \int_{\hat x-\ee}^{\hat x+ \ee} \int_0^1 W_{\G_n}(t, x, y) dy dx  
         \stackrel{\eqref{bound_Wn}}{\leq} 2 K_1 \ee, 
\end{equation}
which in turn implies owing to Lebesgue's Differentation Theorem 
$$
     \int_0^1 W (t, \hat x, y) dy  = \lim_{\ee \to 0^+}
     \frac{1}{2 \ee}\int_{\hat x-\ee}^{\hat x+ \ee} \int_0^1 W (t, x, y) dy dx 
    \stackrel{\eqref{e:Kee}}{\leq} K_1 
$$
and this yields~\eqref{bound_W} with $K_d=K_1$. 
\end{proof}
\subsection{A classical approximation result}
\label{subsec:24}
We now recall a  standard approximation result that will be often used in the next sections. For the sake of completeness,
 we  briefly sketch  its proof.
\begin{lemma} \label{l:average}
Assume $u \in L^1(0, 1)$, $U \in L^1 ([0, 1]^2)$, recall~\eqref{e:ij} and~\eqref{e:average}, set 
$$
    u^n_j = < u >_{I^n_j}, \qquad A^n_{jk} = < U >_{I^n_j \times I^n_k}
$$
and use the notation~\eqref{G-->W_G} and~\eqref{uG->u01}. Then
\begin{equation}
\label{e:convl1}
      u^n \to u \; \text{strongly in $L^1(0, 1)$}, \quad 
      W_{\G_n} \to U \; \text{strongly in $L^1([0, 1]^2)$}.
\end{equation}
If furthermore $u \in L^2(0, 1)$, $U \in L^2 ([0, 1]^2)$, then
\begin{equation}
\label{e:convl2}
      u^n \to u \; \text{strongly in $L^2(0, 1)$}, \quad 
      W_{\G_n} \to U \; \text{strongly in $L^2([0, 1]^2)$}.
\end{equation}
\end{lemma}
\begin{proof}
We only provide the proof of~\eqref{e:convl2}, the proof of \eqref{e:convl1}  being entirely analogous and slightly easier. \\
{\sc Step 1:} we first establish~\eqref{e:convl2} for $u \in C^0([0, 1])$. For fixed $\ee>0$,  by the uniform continuity of $u$ there is $n_\ee$ such that, for every $n \ge n_\ee$, if $|x-y| \leq n^{-1}$ then $|u(x) - u(y)| \leq \ee$. This implies that, for every $j=1, \dots, n$ and $x \in I_j^n$, we have 
$$
    |u(x) - u^n_j | = \left| u(x) - n \int_{I^n_j} u(y) dy \right| =  \left| n \int_{I^n_j}[u(x) - u(y)] dy \right|
    \leq n \int_{I^n_j}|u(x) - u(y)| dy \stackrel{n \ge n_\ee}{\leq} n \int_{I^n_j} \ee dy = \ee. 
$$
This in turn yields $\| u - u_n \|_{2 } \leq \ee$ and by the arbitrariness of $\ee$ we get the desired convergence result. \\
{\sc Step 2:} we consider the general case. For any $u, v \in L^2(0, 1)$ we term $u^n$ and $v^n$ the corresponding piecewise constant approximation, so that by using Jensen's inequality we get 
\begin{equation} \label{e:ennenne}
\begin{split}
  \| u^n - v^n \|^2_{2}& = \sum_{j=1}^n \int_{I^n_j} |u^n_j - v^n_j|^2 dx =
  \sum_{j=1}^n  \int_{I^n_j} \left( n \int_{I^n_j} [u-v] (y) dy \right)^2 dx \\& \stackrel{\text{Jensen}}{\leq} 
  \sum_{j=1}^n  \int_{I^n_j} n \int_{I^n_j} [u-v]^2(y)  dy dx  = \sum_{j=1}^n  \int_{I^n_j} [u-v]^2 (y) dy= \| u - v \|^2_{2}.
\end{split}
\end{equation}
We now fix $\ee>0$ and choose $v \in C^0([0, 1])$ in such a way that $\| u - v \|_{2} \leq \ee$. We then have  
\begin{equation*} 
    \| u - u^n \|_{2} \leq  \| u - v \|_{2} +   \| v - v^n \|_{2} +  \| v^n - u^n \|_{2}
     \stackrel{\eqref{e:ennenne}}{\leq} 2 \| u - v \|_{2} +   \| v- v^n \|_{2}  \leq 2\ee +   \| v- v^n \|_{2}.  
\end{equation*}
By using {\sc Step 1}, we can find $n_\ee$ such that if $n \ge n_\ee$ then $ \| v- v^n \|_{2}\leq \ee$ and by plugging this inequality into the above expression and using the arbitrariness of $\ee$ we obtain the desired convergence result. 
\end{proof}

\section{Convergence results for SIR model on graphs} 
\label{sec:cutconv}
This section contains the proofs of Proposition~\ref{p:uniex} and Theorem~\ref{THM 1}. The exposition is organized as follows:
\begin{itemize}
\item we first  establish in \S\ref{ss:puni}  the uniqueness part of Proposition~\ref{p:uniex}, namely the uniqueness of the distributional solution to \eqref{SIR_W}, \eqref{e:idSIR_W};
\item in \S\ref{ss:pthm1} we prove Theorem~\ref{THM 1}. As a byproduct, the proof provides a constructive argument that yields the existence part of Proposition~\ref{p:uniex}, contingent on showing a sequence of graphs satisfying~\eqref{e:conv_tloc_cut} and either~\eqref{e:bdl2} or~\eqref{bound_Wn};  
\item in \S\ref{ss:pex}, we prove the existence part of Proposition~\ref{p:uniex} by exhibiting a family of graphs satisfying the above requirements. This concludes the proof  Proposition~\ref{p:uniex}.
\end{itemize}
\subsection{Proof of the uniqueness part of Proposition~\ref{p:uniex}}\label{ss:puni}
We fix two distributional solutions $(s_1, i_1, r_1)$ and $(s_2, i_2, r_2)$ satisfying~\eqref{e:sirbound} and we separately consider {\sc Cases 1} and {\sc 2} below.

\smallskip
{\sc Case 1:} $W\in L^2\left([0,T] \times[0,1]^2;\mathbb{R}_+\right)$. We set $u_1: = s_1 + i_1$, $u_2: = s_2 + i_2$ and we recall that, owing to Lemma~\ref{l:reg}, $(s_1, i_1, r_1)$ and $(s_2, i_2, r_2)$ have Sobolev regularity and the equalities in~\eqref{SIR_W} are satisfied pointwise almost everywhere.
By using H\"older's and Young's inequalities we get  
\begin{equation} \label{e:uni1}
\begin{split}
        \f{d}{dt}  & \int_0^1 [s_1- s_2]^2(t, x) dx = \,2 \int_0^1 [s_1- s_2] [\partial_t s_1 - \partial_t s_2](t, x) dx\\
        \\& \,\stackrel{\eqref{SIR_W}}{=}- \underbrace{2 \int_0^1 [s_1-s_2]^2 (t, x) \int_0^1 \beta (t, y)W(t, x, y) i_1(t, y) dydx}_{\ge 0}\\
         &\qquad- 2 \int_0^1 s_2(t,x) [s_1-s_2] (t, x) \int_0^1\beta(t,y) W(t, x, y) [i_1 - i_2 ](t, y) dy dx \\ & 
        \stackrel{\text{H\"older}}{\leq} 2 \| s_2 \|_{\infty} \| \beta \|_{\infty} \| W(t, \cdot, \cdot) \|_{L^2([0,1]^2)} \| s_1(t, \cdot)  - s_2(t, \cdot)  \|_{L^2(0,1)} \| i_1(t, \cdot) - i_2 (t, \cdot)\|_{L^2(0,1)} \\
      & \stackrel{\eqref{e:sirbound}, i_i=u_i-s_i,\text{ Young}}{\leq} 2  \| \beta \|_{\infty}  \| W(t, \cdot, \cdot) \|_{L^2([0,1]^2)}
       \left[\frac{3}{2} \| s_1(t, \cdot)  - s_2(t, \cdot)  \|^2_{L^2(0,1)}  + \f{1}{2}  \| u_1(t, \cdot)  - u_2(t, \cdot)  \|^2_{L^2(0,1)} 
       \right].     
\end{split}
\end{equation}
Next, we point out that 
\begin{equation} \label{e:deru}
    \partial_t [u_1 - u_2] = \gamma [i_1 - i_2] = \gamma [u_1 - u_2 ] - \gamma [s_1 - s_2]
\end{equation}
and by using again H\"older's and Young's inequalities this yields 
\begin{equation} \label{e:uni2}
\begin{split}
          \f{d}{dt}  \int_0^1 [u_1- u_2]^2(t, x) dx & = \int_0^1  \gamma [u_1 (t, x) - u_2 (t,x) ]^2 dx - \int_0^1  \gamma [s_1 (t, x) - s_2(t, x) ] 
          [u_1 (t, x) - u_2 (t, x) ] dx \\ & 
           \stackrel{\text{H\"older, Young}}{\leq}
          \| \gamma \|_{\infty}        \left[\frac{3}{2} \| u_1(t, \cdot)  - u_2(t, \cdot)  \|^2_{L^2(0,1)}  + \f{1}{2}  \| s_1(t, \cdot)  - s_2(t, \cdot)  \|^2_{L^2(0,1)} 
       \right].     
\end{split}
\end{equation}
Combining~\eqref{e:uni1} and~\eqref{e:uni2} we get 
\begin{equation*} \begin{split}
   \f{d}{dt} &  \Big[ \| u_1(t, \cdot)  - u_2(t, \cdot)  \|^2_{L^2(0,1)}  +   \| s_1(t, \cdot)  - s_2(t, \cdot)  \|^2_{L^2(0,1)}  \Big]
   \\ & \leq \frac{7}{2}  \Big[  \| \beta \|_{\infty}  \| W(t, \cdot, \cdot) \|_{L^2([0,1]^2)} +   \| \gamma \|_{\infty}  \Big] \Big[  \| s_1(t, \cdot)  - s_2(t, \cdot)  \|^2_{L^2(0,1)} +  
    \| u_1(t, \cdot)  - u_2(t, \cdot)  \|^2_{L^2(0,1)}  \Big]
    \end{split}
\end{equation*}
and by Gr\"onwall Lemma this implies $ \| u_1(t, \cdot)  - u_2(t, \cdot)  \|^2_{L^2(0,1)}  +   \| s_1(t, \cdot)  - s_2(t, \cdot)  \|^2_{L^2(0,1)}  = 0$ for every $t \in [0,T]$. Since $i_1= u_1-s_1$, $i_2= u_2-s_2$ this in turn implies $s_1 = s_2$, $i_1 = i_2$ a.e. on $[0,T] \times [0,1]$ and, since $r_1 = 1-s_1 - i_1$, $r_2 = 1-s_2 -i_2$, it also implies $r_1 = r_2$ a.e. on $[0,T] \times [0,1]$. 

\smallskip
{\sc Case 2:} $W\in L^1\left([0, T] \times[0,1]^2;\rr_+\right)$ satisfies~\eqref{bound_W}. As before we set $u_1: = s_1 + i_1$, $u_2: = s_2 + i_2$ and we get 
\begin{equation} \label{e:uni3}
\begin{split}
      \f{d}{dt}  |s_1  - s_2 | (t, x)  = &\,
      \mathrm{sign}[s_1-s_2] [\partial_t s_1 - \partial_t  s_2 | (t, x) \\ 
       = &\,- \underbrace{  \mathrm{sign}[s_1-s_2] [s_1-s_2](t, x)  \int_0^1 \beta (t, y) W(t, x, y) i_1(t, y) dy }_{\ge 0} \\ 
       &\quad +  \mathrm{sign}[s_1-s_2] s_2 (t, x) \int_0^1 \beta (t, y)W(t, x, y)  [i_1- i_2](t, y) dy\\
       \stackrel{\eqref{bound_W},\eqref{e:sirbound}, W\geq0}{\leq} &\, K_d  \| \beta \|_{\infty}  \| [i_1 - i_2](t,\cdot) \|_{L^\infty(0,1)} \\ 
       \stackrel{\eqref{bound_W},\eqref{e:sirbound}}{\leq} &\, K_d  \| \beta \|_{\infty}  \Big[ \| [s_1 - s_2](t,\cdot) \|_{L^\infty(0,1)}
       +\| [u_1 - u_2](t,\cdot) \|_{L^\infty(0,1)} \Big].
\end{split}
\end{equation}
By \eqref{e:deru} we get
\begin{equation}\label{e:uni4}
          \f{d}{dt}  \|[u_1  - u_2](t,\cdot) \|_{L^\infty(0,1)}\leq \| \gamma \|_{\infty}  \Big[ \| [s_1 - s_2](t,\cdot) \|_{L^\infty(0,1)}
       +\| [u_1 - u_2](t,\cdot) \|_{L^\infty(0,1)} \Big]
\end{equation}
and combining~\eqref{e:uni3} and~\eqref{e:uni4} with Gr\"onwall Lemma we get $\| [s_1 -s_2](t,\cdot) \|_{L^\infty(0,1)}=0$ and $ \| [u_1 -u_2](t,\cdot) \|_{L^\infty(0,1)}=0$ for every $t \in [0, T]$. Arguing as in the previous case this in turn implies $s_1 = s_2$, $i_1 = i_2$, $r_1 = r_2$ a.e. on $[0, T] \times [0,1]$.

\subsection{Proof of Theorem~\ref{THM 1}}  \label{ss:pthm1}
We now provide the proof of Theorem~\ref{THM 1}.
Let $\{\G_n \}_{n \in \nn}$, $\{\beta^n \}_{n \in \nn}$ and $\{\gamma^n \}_{n \in \nn}$ be fixed as in the statement of the theorem.  Recalling the notation~\eqref{uG->u01} we observe that $(s^n, i^n, r^n)$ is a distributional solution of 
\begin{equation}
	\label{SIR_Wn}
	\begin{cases}
	\partial_t s^n(t,x)=-s^n(t,x)
     \displaystyle{\int_0^1\beta^n(t,y) {W}_{\G_n}(t,x,y)i^n(t,y)\,dy} & \\
	\partial_t i^n(t,x)=s^n(t,x)\displaystyle{
       \int_0^1\beta^n(t,y) {W}_{\G_n}(t,x,y)i^n(t,y)\,dy-\gamma^n(t,x)i^n(t,x)} & x\in[0,1]\\
	\partial_t r^n(t,x)=\gamma^n(t,x)i^n(t,x) \phantom{\displaystyle{\int}}
	\end{cases}
\end{equation}
and satisfies the initial condition $
	        s^n(0, \cdot) = s_0^n$,  $i^n(0, \cdot) = i_0^n$, $r^n(0, \cdot) = r_0^n$.
This implies that for every $(\varphi,\psi,\eta)\in \left(C_c^\infty\left([0, T]\times[0,1]\right)\right)^3$ we have 
	\begin{equation}\begin{split}
	\label{eq:weakLinfty2}	
    & \int_0^T \! \! \! \int_0^1\! \! \!   s^n(t,x)\left(\partial_t\varphi(t,x)-\varphi(t,x)\int_0^1\beta^n(t,y){W}_{\G_n}(t,x,y)i^n(t,y)\,dy\right)dxdt=
    \int_0^1s^n_0(x)\varphi(0,x)\,dx \\ 	      
    & \int_0^T \! \! \! \!  \int_0^1\! \! \! i^n(t,x)\left(\partial_t\psi(t,x)-\gamma^n(t,x)\psi(t,x)\right)
    \!  + \! 
    s^n(t,x)\psi(t,x)\int_0^1\! \! \! \beta^n(t,y){W}_{\G_n}(t,x,y)i^n(t,y)\,dydxdt \\
    & \qquad \qquad \qquad  \qquad \qquad \qquad \qquad \qquad \qquad  \qquad \qquad \qquad= \!\int_0^1
    \! \! \! i^n_0(x)\psi(0,x)\,dx  \\
	&
     \int_0^T \int_0^1r^n(t,x)\partial_t\eta(t,x)+\gamma^n(t,x)\eta(t,x)i^n(t,x)dxdt=
     \int_0^1r^n_0(x)\eta(0,x)\,dx\,.  \\
     \end{split}
	\end{equation}
We recall the bounds~\eqref{e:01SIR_G}, which owing to the notation~\eqref{uG->u01} imply 
\begin{equation}\label{e:mp}
\| s^n \|_{\infty},\| i^n \|_{\infty}, \| r^n \|_{\infty}\leq 1
\end{equation}
and we conclude  that, up to subsequences, 
\begin{equation}
\label{e:weaks}
     s^n \rightharpoonup^\ast s, \quad  i^n \rightharpoonup^\ast i, \quad  r^n \rightharpoonup^\ast r \quad \text{weakly$^\ast$ in $L^\infty([0, T] \times [0, 1])$},
\end{equation}
for some limit function $(s, i, r) \in L^\infty([0, T] \times [0, 1]; \rr^3).$	Also, 
\begin{equation} \label{e:convidat}
     s_0^n \rightharpoonup^\ast s_0, \quad  i_0^n \rightharpoonup^\ast i_0, \quad  r_0^n \rightharpoonup^\ast r_0 \quad \text{weakly$^\ast$ in $L^\infty(0, 1)$},
\end{equation}
for some limit function $(s_0, i_0, r_0) \in L^\infty( [0, 1]; \rr^3).$	 We now pass to the limit in~\eqref{eq:weakLinfty2} by separately considering the two cases. 

\smallskip
{\sc Case 1:} we assume~\eqref{e:bdl2} and proceed according to the following steps.

{\sc Step 1A:} since $(s^n, i^n, r^n)$ is a distributional solution of~\eqref{SIR_Wn}, by \eqref{e:boundder} and recalling~\eqref{e:bdl2} and~\eqref{e:hpgammbeta} we conclude that 
\begin{equation}
\label{e:bounder3}
      \| \partial_t s^n \|_{2},    \| \partial_t i^n \|_{2},    \| \partial_t r^n \|_{2} \leq 
      C( K_0, M).
\end{equation}
We now show that this implies that, up to subsequences, for every $ \psi \in L^2 ([0, T] \times [0, 1]^2)$
\begin{equation}
\label{e:weakpr}
       \int_0^T \! \! \int_0^1\! \! \int_0^1 \! \! \psi (t, x, y)  s^n (t, x) i^n(t, y) dx dy dt \! \!\, \to \! \!  \int_0^T\! \! \int_0^1\! \!  \int_0^1  \! \!
       \psi (t, x, y)  s (t, x) i(t, y) dx dy dt, 
\end{equation}
 that is the product $s^n(t,x) i^n(t,y)$ weakly converges to $s(t,x) i(t,y)$ in $ L^2 ([0, T] \times [0, 1]^2)$.  To this end, we point out that for every fixed $n \in \nn$, since $\partial_t s^n, \partial_t i^n, \partial_t r^n$ are all $L^2$ functions owing to~\eqref{e:bounder3}, we can select a representative of $(s^n, i^n, r^n)$ (which a priori as $L^\infty$ functions are only defined up to negligible sets) such that the map $t \mapsto (s^n, i^n, r^n)(t, \cdot)$ is continuous from $[0, T]$ to $L^2 (0, 1)$ endowed with the strong topology. In the following we will always work with this representative, which allows us to give a meaning to the value $(s^n, i^n, r^n)(t, \cdot) \in L^2 (0, 1)$ for \emph{every} $t \in [0, T]$.

Using the $L^\infty$ bounds  and a standard diagonal argument we can extract a further subsequence such that 
\begin{equation}
\label{e:q}
     s^n (q, \cdot) \rightharpoonup^\ast s(q, \cdot), \quad  i^n(q, \cdot) \rightharpoonup^\ast i(q, \cdot), \quad  
     r^n (q, \cdot) \rightharpoonup^\ast r (q, \cdot) \; \text{weakly$^\ast$ in $L^\infty( 0, 1)$, for every $q \in [0, T] \cap \mathbb{Q}$. }
\end{equation}
Next, we fix $t \in [0, T]$ and a test function $\psi \in L^2 ([0, T] \times [0, 1]^2)$. We also fix a sequence of rational numbers $q_k \to t$ as $k \to + \infty$. We then have 
\begin{equation}\label{e:irrt}
\begin{split}
     & \left| \iint_{[0, 1]^2} \psi (t, x, y)  s^n (t, x) i^n(t, y) dx dy -  \iint_{[0, 1]^2} \psi (t, x, y)  s (t, x) i(t, y) dx dy      
      \right|\\&
     \leq \underbrace{\left| \iint_{[0, 1]^2} \psi (t, x, y)  s^n (t, x) i^n(t, y) dx dy-
     \iint_{[0, 1]^2} \psi (t, x, y)  s^n (q_k, x) i^n(t, y) dx dy \right|}_{E^n_k} \\ & +
     \underbrace{ \left| \iint_{[0, 1]^2} \psi (t, x, y)  s^n (q_k, x) i^n(t, y) dx dy-
     \iint_{[0, 1]^2} \psi (t, x, y)  s^n (q_k, x) i^n(q_k, y) dx dy \right|}_{B^n_k} \\ & +
   \underbrace{  \left| \iint_{[0, 1]^2} \psi (t, x, y)  s^n (q_k, x) i^n(q_k, y) dx dy - \iint_{[0, 1]^2} \psi (t, x, y)  s (q_k, x) i(q_k, y) dx dy
       \right| }_{C^n_k}\\ & +
     \underbrace{ \left| \iint_{[0, 1]^2} \psi (t, x, y)  s (q_k, x) i(q_k, y) dx dy - \iint_{[0, 1]^2} \psi (t, x, y)  s (t, x) i(t, y) dx dy \right|}_{D^n_k}.
\end{split}
\end{equation}
We now estimate each of the above terms separately. Note that 
\begin{equation}
\label{e:A}
   E^n_k \leq \| \psi(t, \cdot, \cdot) \|_{L^2 ([0, 1]^2)} \underbrace{\| i^n \|_{\infty}}_{\leq 1} \| s^n(t, \cdot) - s^n (q_k, \cdot) \|_{L^2 (0, 1)} 
\end{equation}
and, owing to Jensen's inequality,  
\begin{equation} \label{e:jensen2}
\begin{split}
    & \| s^n(t, \cdot) - s^n (q_k, \cdot) \|^2_{L^2 (0, 1)}  = \int_0^1 [s^n(t, \cdot) - s^n (q_k, \cdot) ]^2 dx =
    \int_0^1 \left[\int_{q_k}^t \partial_\tau s^n(\tau, \cdot) d\tau \right]^2 dx \\
    & \stackrel{\text{Jensen}}{\leq}
    |q_k - t| \int_0^1 \int_{q_k}^t (\partial_\tau s^n(\tau, \cdot) )^2 d\tau dx \stackrel{\eqref{e:bounder3}}{\leq} C(K_0, M) |q_k - t|\,,
\end{split}
\end{equation} 
so that plugging the above inequality into~\eqref{e:A} we get 
$$
E_n^k \leq    C(K_0, M)  \| \psi(t, \cdot, \cdot) \|_{L^2 ([0, 1]^2)}\sqrt{ |q_k - t|}, \qquad \mbox{ for every $n$.}
$$
By an analogous argument, we get   $B_n^k \leq    C(K_0, M)  \| \psi(t, \cdot, \cdot) \|_{L^2 ([0, 1]^2)} \sqrt{|q_k - t|}$. 

Owing to~\eqref{e:bounder3}, we have 
\[
   \partial_t s^n \rightharpoonup \partial_t s, \; \partial_t i^n \rightharpoonup \partial_t i \quad \text{weakly in $L^2([0, T] \times [0, 1]),$}
\]
and by recalling~\eqref{e:jensen2} and using the lower semicontinuity of the norm with respect to the weak convergence we get 
\[
    \int_0^1 \int_{q_k}^t (\partial_\tau s(\tau, \cdot) )^2 d\tau dx \leq
      C(K_0, M), \qquad  \int_0^1 \int_{q_k}^t (\partial_\tau i(\tau, \cdot) )^2 d\tau dx \leq
     C(K_0, M).
\]
Hence, arguing as before this yields $D^n_k \leq   C(K_0, M)  \| \psi(t, \cdot, \cdot) \|_{L^2 ([0, 1]^2)} \sqrt{|q_k - t|}$, for every $n \in \nn$. 
To conclude, we fix $\ee>0$ and choose $k$ in such a way that $E^n_k + B^n_k + D^n_k \leq \ee$, for every $n$. Next, we recall~\eqref{e:q} and choose $n_\ee$ in such a way that $C^n_k \leq \ee$ for every $n \ge n_\ee$. By the arbitrariness of $\ee$, this implies that the left hand side of~\eqref{e:irrt} vanishes in the $n \to + \infty$ limit for a.e. $t \in [0, T]$. By Lebesgue's Dominated Convergence Theorem this yields~\eqref{e:weakpr}.  \\
{\sc Step 1B:} we show that the limit $(s, i, r)$ in~\eqref{e:weaks}  is a distributional solution of~\eqref{SIR_W}, \eqref{e:idSIR_W}. We subtract~\eqref{eq:weakLinfty} from~\eqref{eq:weakLinfty2} and in particular we get
\begin{equation}\label{e:diffdf}
\begin{split}
\left|\int_0^T\right.&\left.\int_0^1\left(s^n(t,x)\int_0^1\beta^n(t,y){W}_{\G_n}(t,x,y)i^n(t,y)\,dy-s(t,x)\int_0^1\beta(t,y)W(t,x,y)i(t,y)\,dy\right)\varphi(t,x)\,dxdt\right| \\
\leq&\underbrace{\left|\int_0^T\int_0^1\int_0^1\left({W}_{\G_n}(t,x,y)-W(t,x,y)\right)\beta^n(t,y)s^n(t,x)i^n(t,y)\varphi(t,x)\,dydxdt\right|}_{I_1}\\
+& \underbrace{\left|\int_0^T\int_0^1\int_0^1W(t,x,y)\left(\beta^n(t,y)s^n(t,x)i^n(t,y)-
\beta(t,y)s(t,x)i(t,y)\right)\varphi(t,x)\,dydxdt\right|}_{I_2}
\end{split}
\end{equation}
for every $\varphi \in C^\infty_c ([0, T] \times [0, 1]^2)$.  We first control $I_2$:
\begin{equation*}
\begin{split}
 I_2  & \leq \underbrace{\left|\int_0^T\int_0^1\int_0^1 W(t,x,y) [ \beta^n(t,y) - \beta (t, y)] s^n(t,x)i^n(t,y)\varphi(t,x)\,dydxdt\right|}_{: = I_{21}} \\
 &+    \underbrace{ \left|\int_0^T\int_0^1\int_0^1 W(t,x,y)  \beta (t, y) [s^n(t,x)i^n(t,y) - s (t, x) i(t, y)] \varphi(t,x)\,dydxdt\right|}_{: = I_{22}}. 
\end{split}
\end{equation*}
To see that $I_{22}$ vanishes in the $n \to + \infty$ limit it suffices to point out that $W \beta \in L^2 ([0, T] \times [0, 1]^2)$ and then use~\eqref{e:weakpr}. To deal with $I_{21}$ we 
observe that 
\begin{equation*}
\begin{split}
     I_{21} &  \stackrel{\| s_n \|_{\infty}, \| i_n \|_{\infty} \leq 1}{\leq} \int_0^T \int_0^1 \int_0^1 W(t, x, y) |\beta^n(t,y) - \beta (t, y)| dy dx dt \\
     & \stackrel{\text{H\"older}}{\leq} \| W \|_{L^2 ([0, T] \times [0, 1]^2)} \left( \int_0^T   \int_0^1 |\beta^n(t,y) - \beta (t, y)|^2 dy dt    \right)^{1/2} \\
     &  \stackrel{\eqref{e:bdl2}}{\leq} K_0 \left( \int_0^T   \int_0^1 |\beta^n(t,y) - \beta (t, y)|^2 dydt    \right)^{1/2} \stackrel{\eqref{e:hpgammbeta}}{\leq}
       \sqrt{2 M} K_0  \left( \int_0^T  \int_0^1  |\beta^n(t,y) - \beta (t, y) | dy dt    \right)^{1/2}
\end{split}
\end{equation*}    
and by~\eqref{e:hpgammbeta} the right-hand side of the above equation vanishes in the $n \to + \infty$ limit. Going back to~\eqref{e:diffdf}, this shows that $I_{2}$ converges to $0$. 
Let us focus now on $I_1$. We set
$$
    L: = \|\beta^n\|_{\infty}\|s^n\|_{\infty}\|i^n\|_{\infty}\|\varphi\|_{\infty} \stackrel{\|s^n\|_{\infty},\|i^n\|_{\infty}\leq 1}{\leq} \|\beta\|_{\infty} \|\varphi\|_{\infty} \stackrel{\eqref{e:betagamma}}{\leq} M \|\varphi\|_{\infty}  \, .  
$$ 
By multiplying and dividing $I_1$ by $L$ and making use of \eqref{cut2} in Appendix \ref{sec:graphons}, we get 
\[
\begin{split}
I_1=&\,L\left|\int_0^T\int_0^1\int_0^1\right.\left({W}_{\G_n}(t,x,y)-W(t,x,y)\right) \cdot\left.  \f{s^n(t,x)}{\|s^n\|_{\infty}}\f{\varphi(t,x)}{\|\varphi\|_{\infty}} \f{\beta^n(t,y)}{\|\beta^n\|_{\infty}}\f{i^n(t,y)}{\|i^n\|_{\infty}}\,dydxdt\right|\\
\leq&\,L\int_0^T\sup_{\substack{\|f(t,\cdot)\|_{\infty}\leq1 \\ \|g(t,\cdot)\|_{\infty}\leq1}}\left|\int_{[0,1]^2}({W}_{\G_n}(t,x,y)-W(t,x,y))f(t,x)g(t,y)\,dydx\right|\,dt\\
\stackrel{\eqref{cut2}}{\leq}&4L \int_0^{T}\|{W}_{\G_n}-W\|_{\Box}\,dt \stackrel{\eqref{e:conv_tloc_cut}}{\to} 0 \quad \text{as $n \to + \infty$.}
\end{split}
\]
This implies that $I_1\to0$ as $n\to+\infty$ and hence that the left hand side of~\eqref{e:diffdf} vanishes in the $n \to + \infty$ limit.  By relying on similar arguments, one can pass to the limit in all the other terms in~\eqref{eq:weakLinfty2} and show that $(s, i, r)$ is a distributional solution of~\eqref{SIR_W},~\eqref{e:idSIR_W}. 

\smallskip
{\sc Case 2:} we assume the family of graphs satisfy~\eqref{bound_Wn}.  The proof follows the same argument as in the previous case, so we only provide a sketch and highlight the points where there is some difference.  By using the proof of Lemma~\ref{l:reg} and recalling~\eqref{e:hpgammbeta} we conclude that 
$$
      \| \partial_t s^n \|_{\infty},    \| \partial_t i^n \|_{\infty},    \| \partial_t r^n \|_{\infty} \leq 
      C( K_1, M),
$$
which implies that there is a representative of $(s, i, r)$ such that the maps $t \mapsto s(t, \cdot)$, $t \mapsto i(t, \cdot)$, $t \mapsto r(t, \cdot)$  are continuous from $[0, T]$ in $L^\infty (0, 1)$ endowed with the $\esssup$ (strong) topology. The goal is now to show that this implies that, up to subsequences, for every $\psi \in L^1 ([0, T] \times [0, 1]^2)$
\begin{equation}
\label{e:weakpr2}
       \int_0^T \! \! \int_0^1\! \! \int_0^1 \! \! \psi (t, x, y)  s^n (t, x) i^n(t, y) dx dy dt \! \! \,\to \! \!  \int_0^T\! \! \int_0^1\! \!  \int_0^1  \! \!
       \psi (t, x, y)  s (t, x) i(t, y) dx dy dt.
\end{equation}
The proof follows the same lines as the proof of~\eqref{e:weakpr}: up to subsequences, we have~\eqref{e:q} and hence, for a given $t \in [0, T]$, we have the splitting as in~\eqref{e:irrt}, where now $$
 E^n_k \leq \| \psi(t, \cdot, \cdot) \|_{L^1([0, 1]^2)} \| s^n(t, \cdot) - s^n (q_k, \cdot) \|_{L^\infty (0, 1)}
 $$
 and by using the continuity of the map    $t \mapsto s(t, \cdot)$ from $[0, T] \to L^\infty (0, 1)$ we conclude that $\| s^n(t, \cdot) - s^n (q_k, \cdot) \|_{L^\infty (0, 1)}$ is arbitrarily small, provided $q_k$ is sufficiently close to $t$. By reasoning similarly we can show that the other terms on the right hand side of~\eqref{e:irrt} are arbitrarily small and hence establish~\eqref{e:weakpr2}. The rest of the proof in {\sc Case 2} works as in {\sc Case 1}. 

\begin{remark}
	\label{rem:si_Linfty}
     In system~\eqref{SIR_G} the term $s_j^n$  only interacts with $n^{-1} \sum_{k=1}^n \beta_k^n(t)A_{jk}^n i_k^n$. This allows us to pass to the limit in the integral term in the first equation of \eqref{SIR_W}. From the point of view of applications it would be also meaningful to consider the following SIR system:
	\begin{equation}
		\label{SIR'_G}
		\begin{cases}
\displaystyle{\f{ds_j(t)}{dt}=-s_j(t)\f1{n}\sum_{k=1}^{n}\beta_k^n(t)
A_{jk}^n(t)i_k(t)-\beta_j^n(t)s_j^n(t)i_j^n(t)} & \\
		\displaystyle{\f{di_j(t)}{dt}=s_j(t)\f1{n}\sum_{k=1}^{n}\beta_k^n(t)A_{jk}^n(t)i_k(t)+\beta_j^n(t)s_j^n(t)i_j^n(t)-\gamma_j^n(t) i_j(t)} &\qquad\qquad\qquad j=1,\,\dots,\,n\\
		\displaystyle{\f{dr_j(t)}{dt}=\gamma_j^n(t) i_j(t)} 
		\end{cases}
	\end{equation}
	where the interaction of the node $j$ with itself is not scaled by the factor $n^{-1}$ since it is somehow independent from the size of the network.
The proof of Theorem~\ref{THM 1} does not directly extend to~\eqref{SIR'_G} because from the weak$^*$ convergence of $s^n$ and $i^n$ alone we cannot infer that the product $s^n(t,x)i^n(t,x)$ converges to $s(t,x)i(t,x)$. As a matter of fact, the argument in the proof of Theorem~\ref{THM 1} shows that there is $\pi\in L^\infty\left([0,T]\times [0,1]\right)$, $0 \leq \pi \leq 1$, such that $(s^n,i^n, r^n)$ converge weakly$^*$ in $L^\infty\left([0,T] \times [0,1]\right)$ to a distributional solution of 
	\[
	\begin{cases}
	\displaystyle{\partial_t s(t,x)=-s(t,x)\int_0^1\beta(t,y)W(t,x,y)i(t,y)\,dy-\beta(t,x)\pi(t,x)}& \\
	\displaystyle{\partial_t i(t,x)=s(t,x)\int_0^1\beta(t,y)W(t,x,y)i(t,y)\,dy+\beta(t,x)\pi(t,x)-
\gamma(t,x)i(t,x)} & \qquad\qquad x\in[0,1]\\
	\partial_tr(t,x)=\gamma(t,x)i(t,x) \phantom{\displaystyle{\int}}
     \phantom{\displaystyle{\int}}
	\end{cases}
	\]
     satisfying the initial condition~\eqref{e:idSIR_G}.
	\end{remark}
	\subsection{Existence of distributional solution of \eqref{SIR_W}}\label{ss:pex}
We can now provide the proof of the existence part of Proposition~\ref{p:uniex}.
Let $W$, $T$, $\gamma$, $\beta$, $s_0, i_0, r_0$ be as in the statement of Proposition~\ref{p:uniex}.
Owing to the  proof of Theorem~\ref{THM 1}, to establish the existence part of Proposition~\ref{p:uniex} it suffices to exhibit sequences of graphs
$\{ \G_n \}_{n \in \mathbb N}$, coefficients $\{ \beta^n \}_{n \in \mathbb N}$ and $\{ \gamma^n \}_{n \in \mathbb N}$ and initial data $\{ (s_0^n, i_0^n, r_0^n)\}_{n \in \mathbb N}$ such that~\eqref{e:conv_tloc_cut},~\eqref{e:hpgammbeta},~\eqref{e:convidat} and either~\eqref{e:bdl2} or~\eqref{bound_Wn} are satisfied. 

We define $\beta^n$, $\gamma^n$ and $(s_0^n, i_0^n, r_0^n)$ by using~\eqref{e:average}, \eqref{e:betagamma} and the notation~\eqref{uG->u01}. Note that, if $\beta^n_k$ and $\gamma^n_k$ are as in~\eqref{e:betagamma}, then
\begin{equation}
\label{linftybk}
    |\beta^n_k(t)|\leq  \| \beta \|_{\infty}, \quad |\gamma^n_k (t)| \leq \| \gamma \|_{\infty}, \quad \text{for every $k=1, \dots, n$ and a.e. $t \in [0, T]$} 
\end{equation}
and, by using~\eqref{e:convl1},
$$
        \lim_{n \to +\infty}  \| \gamma(t, \cdot)  - \gamma^n 
         (t, \cdot) \|_{L^1(0, 1)} =0, \quad
        \lim_{n \to +\infty}  \| \beta(t, \cdot)  - \beta^n 
         (t, \cdot) \|_{L^1(0, 1)} =0 \qquad \text{for a.e. $t \in [0, T]$.}
$$
Owing to Lebesgue's Dominated Convergence Theorem and to~\eqref{linftybk} this yields 
\begin{equation}
\label{e:settembre}
           \lim_{n \to +\infty} \int_0^T \| \beta(s, \cdot)  - \beta^n 
         (s, \cdot) \|_{L^1(0, 1)} ds =0,
           \quad 
          \lim_{n \to +\infty} \int_0^T \| \gamma(s, \cdot)  - \gamma^n 
         (s, \cdot) \|_{L^1(0, 1)} ds =0.
\end{equation}
This implies that both conditions in~\eqref{e:hpgammbeta} are satisfied. 
To establish~\eqref{e:convidat} we use again~\eqref{e:convl1}. 
We now verify the assumptions on $W_{\G_n}$. If $W \in L^2 ([0, T] \times [0, 1]^2;\mathbb{R}_+)$, then by Jensen's inequality
\begin{equation}\label{boundl21}
\begin{split}
   & \| W_{\G_n}(t, \cdot,\cdot) \|^2_{L^2 ([0, 1]^2)} 
    = \sum_{j, k=1}^n \iint_{I^n_j \times I^n_k} (A^n_{jk})^2 dx dy \stackrel{\eqref{e:adj}}{=}
   \sum_{j, k=1}^n \iint_{I^n_j \times I^n_k} ( < W(t, \cdot, \cdot) >_{I^n_j \times I^n_k})^2 dx dy\\
    &  \stackrel{\text{Jensen}}{\leq}
     \sum_{j, k=1}^n \iint_{I^n_j \times I^n_k}  < W^2 (t, \cdot, \cdot) >_{I^n_j \times I^n_k} dx dy = 
     \sum_{j, k=1}^n  \iint_{I^n_j \times I^n_k} W^2 (t, x, y)dx dy  = \| W (t, \cdot, \cdot) \|^2_{L^2 ([0, 1]^2)}
\end{split}
\end{equation} 
for a.e. $t \in [0, T]$, and this yields~\eqref{e:bdl2} with $K_0 = \| W \|_{2}$. If $W\in L^1([0,T]\times [0,1]^2;\rr_+)$ satisfies~\eqref{bound_W}, we have 
    \begin{equation}
    \label{boundl11}
   \begin{split}
   \esssup_{t \in [0, T], \, x \in [0,1]}& \int_0^1W_{\G_n} (t,x,y)\,dy
       \stackrel{\eqref{e:adj}}{=} 
       \esssup_{t \in [0, T], \, j=1, \dots, n } \sum_{k=1}^n \int_{I^n_k}  
       < W(t, \cdot,\cdot) >_{I^n_j \times I^n_k}\,dy \\ & 
     =
     \esssup_{t \in [0, T], \, j=1, \dots, n } \sum_{k=1}^n n
  \iint_{I^n_j \times I^n_k} W (t, x, y)dx dy  =
     \esssup_{t \in [0, T], \, j=1, \dots, n } n
  \int_{I^n_j } \int_0^1 W (t, x, y)dy dx \\
&  \stackrel{\eqref{bound_W}}{\leq}
      \esssup_{t \in [0, T], \, j=1, \dots, n } n
  \int_{I^n_j } K_d \ dx = K_d, 
\end{split}
\end{equation}
which yields~\eqref{bound_Wn} with $K_1=K_d$. We are left to verify~\eqref{e:conv_tloc_cut}. To this end, we recall that, owing to~\eqref{cut2} in Appendix \ref{sec:graphons}, 
\begin{equation}
\label{e:cutl1}
         \| U(t, \cdot,\cdot) \|_{\Box} \leq \| U(t, \cdot,\cdot) \|_{L^1 ([0, 1]^2)}
         \quad \text{for a.e. $t \in [0, T]$}
\end{equation}
for every time-dependent graphon $U \in L^1 ([0 , T] \times [0, 1]^2)$. Next, we combine~\eqref{e:convl1} and either \eqref{boundl21} or \eqref{boundl11} with Lebesgue's Dominated Convergence Theorem to get 
 $$
    \lim_{n \to +\infty} \int_0^T \| W_{\G_n}(s, \cdot,\cdot)  - W
         (s, \cdot,\cdot) \|_{L^1([0, 1]^2)} ds =0,
$$
which owing to~\eqref{e:cutl1} yields~\eqref{e:conv_tloc_cut}. 

\section{Deterministic Approximation to SIR on Graphons}\label{s:det}
This section presents the convergence analysis for the deterministic approximations to \eqref{SIR_W}, i.e. the proof of Theorem~\ref{thm:det}. We consider each case of the theorem separately.
\subsection{Proof of Theorem~\ref{thm:det} ({\sc Case 1}) } 
\label{sec:proofdet}
We first prove the following a priori estimate on the $L^{2}$-error of the approximation at each fixed time.
\begin{lemma}\label{l:gronwall}
Under the same assumptions as in the statement of Theorem~\ref{thm:det} ({\sc Case 1}) we have
\begin{equation}
\label{e:grolem}
\begin{split}
       \|s (t, \cdot) -  s^n (t, \cdot)  \|^2_{L^2(0,1)} & 
     \! \! +  \|i (t, \cdot) -  i^n (t, \cdot)  \|^2_{L^2(0,1)} + 
     \|r (t, \cdot) -  r^n (t, \cdot)  \|^2_{L^2(0,1)}
      \\
     \leq&\, 
     B \Big(
       \|s_0 - s^n_0  \|^2_{2} \! \!
     +  \|i_0 -  i^n_0  \|^2_{2} \! \!+
     \|r_0 -  r^n_0  \|^2_{2} \Big)
      \\ & + 
      B \displaystyle{\int_0^t
       \Big( \| [\beta W -
      \beta^nW_{\G_n} ] (s,\cdot, \cdot) \|^2_{L^2([0,1]^2)}
     +  \| \gamma (s, \cdot)- \gamma^n (s, \cdot) \|^2_{L^2(0,1)}
     \Big) ds}
\end{split}
\end{equation}
for every $t \in [0, T]$, where $B$ is a suitable constant only depending on $T,  \| \beta \|_{\infty},
 \| \gamma \|_{\infty}$ and $\| W \|_{2}$. 
\end{lemma}

\begin{proof}
We rely on a Gr\"onwall type argument and  proceed according to the following steps. 

\smallskip
{\sc Step 1:} we subtract the first line of~\eqref{SIR_Wn} from the first line of~\eqref{SIR_W} and multiply the result by~$2 [s - s^n] $. This yields 
\begin{equation}\label{e:smenos}
\begin{split}
       \partial_t [s -  s^n]^2 (t, x)  = &
        \underbrace{- 2 [s -  s^n]^2
       (t, x)  \int_0^1 \beta (t, y) W (t,x,y) i (t, y) dy}_{J_1(t, x)}  \\
       & \underbrace{- 2
       s^n(t,x) [s -  s^n](t, x)
       \int_0^1  \left( \beta (t, y) W (t,x,y) i (t, y)  -
       \beta^n (t, y) W_{\G_n} (t, x,y)  i^n (t, y)
      \right) dy}_{J_2(t, x)}. 
\end{split}
\end{equation}  
Recalling that $\beta, W \ge 0$ by assumption and that $i\ge 0$ owing to~\eqref{e:sirbound} we have 
\begin{equation}
\label{e:j1}
       J_1 (t, x) \leq 0 \quad \text{for a.e. $(t, x) \in [0, T] \times [0, 1]$.}
\end{equation}
To control $J_2$, we split it as
\begin{equation}\label{e:j2}
\begin{split}
       | J_2(t, x)| \leq &
      \underbrace{2  \left|
        s^n(t,x) [s -  s^n] (t, x)
       \int_0^1   \beta (t, y) W (t, x, y) [i -  i^n](t, y) dy\right|}_{J_{21}} \\
      & + 
      \underbrace{ \left| 2
       s^n(t,x) [s - s^n] (t, x)
       \int_0^1    i^n (t,y)[\beta (t, y) W (t, x,y)  -  \beta^n (t, y) W_{\G_n}  (t, x,y) ]dy \right|}_{J_{22}}\,.
\end{split}
\end{equation}
By using H\"older's and Young's  inequalities we get 
\begin{equation}\label{e:j21}
\begin{split}
          \int_0^1 J_{21} (t, x) dx  & \stackrel{\eqref{e:mp}}{\leq}
          2 \int_0^1 
         |s -  s^n| (t, x)
       \int_0^1   \beta (t, y) W (t, x, y) |i -  i^n| (t, y) dy dx \\ & 
        \stackrel{\text{H\"older}}{\leq}
      2 \| \beta \|_{\infty} \| [s-s^n] (t, \cdot)\|_{L^2(0, 1)} \| [i - i^n](t, \cdot) \|_{L^2(0, 1)}
      \| W (t, \cdot, \cdot) \|_{L^2 ([0, 1]^2)}
     \\ & 
        \stackrel{\text{Young}}{\leq}
       \| \beta \|_{\infty}
      \| W (t, \cdot, \cdot) \|_{L^2 ([0, 1]^2)} 
     \left[ \|[s-s^n ](t, \cdot)\|^2_{L^2(0, 1)}+ \| [ i - i^n](t, \cdot) \|^2_{L^2(0, 1)}
     \right].
\end{split}
\end{equation}
To control $J_{22}$, we point out that
\begin{equation*}
\begin{split} 
     \int_0^1   & i^n(t,y) \Big| \beta (t, y) W (t, x,y)  - \, \beta^n (t, y)
     W_{\G_n}  (t, x, y) \Big| dy\\ 
     \leq & \,
     \|    i^n (t, \cdot) \|_{L^2(0, 1)} \| [\beta W - 
     \beta^n W_{\G_n}](t,x, \cdot) \|_{L^2(0, 1)}
     \stackrel{\eqref{e:mp}}{\leq} \| [\beta W -
      \beta^n W_{\G_n}]  (t,x, \cdot) \|_{L^2(0, 1)},
\end{split}
\end{equation*}
and combining~\eqref{e:mp} with Young's Inequality we get  
\begin{equation}\label{e:j22}
   \int_0^1 J_{22} (t, x) dx \leq   \|s -  s^n  \|^2_{L^2(0, 1)} +
   \| [\beta W -
      \beta^n W_{\G_n}]  (t,\cdot,\cdot) \|^2_{L^2([0, 1]^2)}\,.    
\end{equation}
Coupling \eqref{e:smenos},~\eqref{e:j1}, \eqref{e:j2}, \eqref{e:j21}, \eqref{e:j22} we get 
\begin{equation}\begin{split}
\label{e:sms}
   \frac{d}{dt} \int_0^1 |s-s^n|^2 (t, x) dx& \leq  \| \beta \|_{\infty}
      \| W (t, \cdot, \cdot) \|_{L^2 ([0, 1]^2)} 
     \left[ \|[s-s^n ](t, \cdot)\|^2_{L^2(0, 1)}+ \| [ i - i^n](t, \cdot) \|^2_{L^2(0, 1)}
     \right]\\ &+\|[s -  s^n](t,\cdot)  \|^2_{L^2(0, 1)} +
   \| [\beta W -
      \beta^n W_{\G_n}]  (t,\cdot,\cdot) \|^2_{L^2([0, 1]^2)}.
\end{split}
\end{equation}

{\sc Step 2:} we set $v: = s+ i$, $v^n: = s^n+ i^n$ and point out that owing to~\eqref{SIR_Wn} we have $$
    \partial_t [v -v^n] = - \gamma i + \gamma^n i^n = - \gamma [v-s] + \gamma^n [v^n - s^n]
    = - \gamma [v-v^n] + \gamma [s-s^n] -v^n [\gamma- \gamma^n] + s^n [\gamma - \gamma^n].
$$
This yields 
\begin{equation*}
\begin{split}
    \partial_t [v - v^n]^2 = \underbrace{-2\gamma [v- v_n]^2}_{\leq 0} + 2 \gamma [v - v^n][s-s^n] - 2 v_n [v - v^n][\gamma - \gamma^n]
   +2  s^n [v - v^n][\gamma - \gamma^n]
\end{split}
\end{equation*}
and by integrating, recalling \eqref{e:mp}, noting that $0 \leq v, v^n \leq 1$ and using  H\"older's and Young's inequalities we get 
\begin{equation} \label{umu} \begin{split}
   \f{d}{dt}\int_0^1 [v - v^n]^2(t, x) dx &
   \leq \| \gamma\|_{\infty}\Big[ \| [v- v^n](t, \cdot) \|^2_{L^2 (0, 1)} + 
    \| [s- s^n](t, \cdot) \|^2_{L^2 (0, 1)} \Big] \\& +
   2 \| [v- v^n](t, \cdot) \|^2_{L^2 (0, 1)}  +      2 \| [\gamma- \gamma^n](t, \cdot) \|^2_{L^2 (0, 1)}.
\end{split}
\end{equation}

{\sc Step 3:} by combining~\eqref{e:sms} and~\eqref{umu} and pointing out that $\| [i-i^n](t,\cdot) \|^2_{L^2 (0, 1)} \leq 2 \big(\| [v-v^n](t,\cdot) \|^2_{L^2 (0, 1)} + \| [s-s^n](t,\cdot) \|^2_{L^2 (0, 1)}\big)$ we get
\begin{equation*}
\begin{split}
  \frac{d}{dt} \Big[  \| [s-s^n ](t, \cdot) \|^2_{L^2 (0, 1)} & + \| [v-v^n](t, \cdot) \|^2_{L^2 (0, 1)}  \Big]\\ &\leq
    \| [\beta W -
      \beta^n W_{\G_n}] (t,\cdot,\cdot) \|^2_{L^2([0,1]^2)}+  2 \| [\gamma- \gamma^n](t, \cdot) \|^2_{L^2 (0, 1)}\\ & + \| [s-s^n ](t, \cdot) \|^2_{L^2 (0, 1)} \Big[ 3 \| \beta \|_{\infty}
      \| W (t, \cdot, \cdot) \|_{L^2 ([0, 1]^2)} + \| \gamma \|_{\infty} + 1 \Big] \\ & +
     \| [v-v^n ](t, \cdot) \|^2_{L^2 (0, 1)} \Big[ 2 \| \beta \|_{\infty}
      \| W (t, \cdot, \cdot) \|_{L^2 ([0, 1]^2)} + \| \gamma \|_{\infty} + 2 \Big].
\end{split}
\end{equation*}
Owing to Gr\"onwall Lemma and to the identities $i = v-s$, $i^n=v^n - s^n$, $r=1-v$, $r^n = 1- v^n$ we eventually arrive at~\eqref{e:grolem}.
\end{proof}
\subsubsection{Conclusion of the proof of Theorem~\ref{thm:det}  {\sc (Case 1)}}\label{sss:condet}
In view of Lemma~\ref{l:gronwall}, to establish Theorem~\ref{thm:det} ({\sc Case 1}) we are left to show that the right hand side of~\eqref{e:grolem} converges to $0$ as $n \to + \infty$ uniformly in $t \in [0, T]$. 
Owing to Lemma~\ref{l:average}, we get for the initial data
\begin{equation}
\label{e:convergence2}
     \lim_{n \to +\infty}  \|s_0 -  s^n_0  \|^2_{2} =0, \quad   \lim_{n \to +\infty}  \|i_0 - i^n_0  \|^2_{2} =0, \quad 
         \lim_{n \to +\infty}  \|r_0 -  r^n_0  \|^2_{2} =0\,.
\end{equation}
Arguing analogously as in \eqref{boundl21}, we obtain 
\begin{equation} \label{boundl22}
     \| \beta^n (t, \cdot) \|_{L^2 (0, 1)} \leq  
      \| \beta(t, \cdot) \|_{L^2 (0, 1)}, \quad 
 \| \gamma^n (t, \cdot) \|_{L^2 (0, 1)} \leq  
      \| \gamma(t, \cdot) \|_{L^2 (0, 1)} \quad \text{for a.e. $t \in [0, T]$}\,.
\end{equation}
Using again \eqref{e:convl2} from Lemma~\ref{l:average}, we get 
$$
      \lim_{n \to +\infty}  \| \beta(t, \cdot)  - \beta^n 
         (t, \cdot) \|^2_{L^2(0, 1)} =0, \quad     \lim_{n \to +\infty}  \| \gamma(t, \cdot)  - \gamma^n 
         (t, \cdot) \|^2_{L^2(0, 1)} =0 \quad \text{for a.e. $t \in [0, T]$}
$$
and by virtue of the Lebesgue's Dominated Convergence Theorem together with~\eqref{boundl22} this yields 
\begin{equation}
\label{e:gamma}
          \lim_{n \to +\infty} \int_0^T  \| \beta(t, \cdot)  - \beta^n 
         (t, \cdot) \|^2_{L^2(0, 1)} dt =0, \quad \lim_{n \to +\infty} \int_0^T \| \gamma(s, \cdot)  - \gamma^n 
         (s, \cdot) \|^2_{L^2(0, 1)} ds =0\,. 
\end{equation}
Next, we use H\"older's inequality and point out that 
\begin{equation}\label{e:bw}
\begin{split}
         &\| [\beta W  -
      \beta^n  W_{\G_n}]  (t, \cdot,\cdot) \|^2_{L^2 ([0, 1]^2)}
             \leq 
      \Big(  \|[ \beta W -
      \beta^n  W](t, \cdot,\cdot) \|_{L^2 ([0, 1]^2)}   +
       \|[ \beta^n W  -
      \beta^n  W_{\G_n}] (t, \cdot,\cdot) \|_{L^2 ([0, 1]^2)} \Big)^2 \\
     &   \leq
      \Big(    \|[ \beta W -
      \beta^n  W](t, \cdot,\cdot) \|_{L^2 ([0, 1]^2)} +
       \| \beta^n (t, \cdot) \|_{L^\infty (0, 1)} \|[ W  -
      W_{\G_n}  ](t, \cdot,\cdot) \|_{L^2 [0, 1]^2} \Big)^2 \\ &
    \leq 
      2  \|[ \beta W -
      \beta^n  W](t, \cdot,\cdot) \|^2_{L^2 ([0, 1]^2)}  +
    2  \| \beta^n (t, \cdot) \|^2_{L^\infty (0, 1)} \|[ W  -
      W_{\G_n}  ](t, \cdot,\cdot) \|^2_{L^2 ([0, 1]^2)}\,.
\end{split}
\end{equation}
We now want to show that 
\begin{equation}
\label{e:beta:A}
          \lim_{n \to +\infty} \underbrace{\int_0^T \| \beta W(s, \cdot, \cdot)  - \beta^n W
         (s, \cdot, \cdot) \|^2_{L^2([0, 1]^2)} ds }_{: = H_n}=0\,.
\end{equation}
To this end, it suffices to show that, for every subsequence $\{ H_{n_j} \}_{j \in \mathbb N}$, there is a further subsubsequence $\{ H_{n_{j_h}} \}_{h \in \mathbb N}$
such that $\lim_{h \to + \infty} H_{n_{j_h}} =0.$ We therefore fix an arbitrary subsubsequence $\{ H_{n_j} \}_{j \in \mathbb N}$ and notice that from~\eqref{e:settembre} it follows that there is a subsubsequence $\beta^{n_{j_h}}(t, x)$  that converges to $\beta(t, x)$ for a.e. $(t, x) \in [0, T] \times [0, 1] $.  This implies that $W \beta^{n_{j_h}}(t, x)$ converges to $W \beta(t, x)$ for a.e. $(t, x) \in [0, T] \times [0, 1] $. Hence,  the bound 
$$
    |W [ \beta^{n_{j_h}} - \beta] |^2  \leq |W|^2 [\| \beta^{n_{j_h}} \|_{\infty} + \| \beta \|_{\infty} ]^2  \leq 4   |W|^2  \| \beta \|_{\infty}^2 \quad \text{a.e. on $[0, T] \times [0, 1]^2$}
$$
together with the Lebesgue's Dominated Convergence Theorem  yields $\lim_{h \to + \infty} H_{n_{j_h}} =0$ and concludes the proof of~\eqref{e:beta:A}. 
Owing to~\eqref{e:convl2}, \eqref{boundl21} and to the Lebesgue's Dominated Convergence Theorem we also have 
\begin{equation}
\label{e:beta}
        \lim_{n \to +\infty} \int_0^T \| W(s, \cdot, \cdot)  - W_{\G_n} 
         (s, \cdot, \cdot) \|^2_{L^2([0, 1]^2)} ds =0.
\end{equation}
Substituting \eqref{e:beta:A} and \eqref{e:beta} into~\eqref{e:bw}, using~\eqref{e:gamma} and~\eqref{e:convergence2} and recalling~\eqref{e:grolem} we eventually arrive at~\eqref{e:cisiamo}.
%\subsection{Proof of Corollary~\ref{c:det}} 
\subsection{Proof of Theorem~\ref{thm:det}  ({\sc Case 2})}\label{ss:pcdet}
The proof follows the same argument as  for ({\sc Case 1})  ,%in \S\ref{sec:proofdet}, 
so we only  sketch and highlight the points where there are differences. 
We subtract the first line of~\eqref{SIR_Wn} from the first line of~\eqref{SIR_W} and multiply the result by~$\mathrm{sign}[s-s^n]$. Next, we integrate in space and arguing as in {\sc Step 1} of the proof of Lemma~\ref{l:gronwall} we get 
\begin{equation}
\label{gronwall2}
  \f{d}{dt} \int_0^1 |s - s^n|(t, x) dx \leq \int_0^1 \int_0^1 \beta (t, y) W(t, x, y) |i - i^n|(t, y) dydx +
   \int_0^1 \int_0^1 |\beta W -  \beta^n W_{\G_n}|(t, x, y) dy dx.
\end{equation}
From the symmetry property $W(t, x, y) = W(t, y, x)$ we immediately get
\[
\begin{split}
   \int_0^1 \int_0^1 \beta (t, y) W(t, x, y) |i - i^n|(t, y) dxdy &\,\leq \| \beta \|_{\infty}
     \int_0^1  |i - i^n|(t, y) \int_0^1  W(t, x, y) dx dy \\
     &\,\stackrel{\eqref{bound_W}}{\leq} K_d\|\beta\|_\infty \| [i-i^n](t,\cdot)\|_{L^1(0, 1)}.
\end{split}
\]
By plugging the above inequality in~\eqref{gronwall2} and then arguing as in the proof of Lemma~\ref{l:gronwall} we arrive at 
\begin{equation*}\begin{split}
  \|[s  -  s^n] &(t, \cdot)  \|_{L^1(0, 1)} 
     \! \! +  \|[i -  i^n] (t, \cdot)  \|_{L^1(0, 1)} + 
     \|[r -  r^n] (t, \cdot)  \|_{L^1(0, 1)}
      \\& \leq \! 
     D \Big(
       \|s_0 - s^n_0  \|_{1} \! \!
     +  \|i_0 -  i^n_0  \|_{1} \! \!+
     \|r_0 -  r^n_0   \|_{1} \Big)
      \\ & \quad + 
      D \displaystyle{\int_0^t
       \Big( \| [\beta W -
      \beta^n W_{\G_n} ] (s,\cdot, \cdot) \|_{L^1([0, 1]^2)}
     +  \| [\gamma - \gamma^n] (s, \cdot) \|_{L^1(0, 1)}
     \Big) ds} \quad \text{for every $t \in [0, T]$},
\end{split}
\end{equation*}
for a suitable constant $D$  only depending on $T, K_d, \| \beta \|_{\infty}$ and 
 $\| \gamma \|_{\infty}$. Hence, we need to ensure convergence of the right hand side of the above expression.
 We first point out that
\begin{equation*}
  \| [\beta W -
      \beta^n W_{\G_n} ] (s,\cdot, \cdot) \|_{L^1([0, 1]^2)}
\leq  \| [\beta -\beta^n]W (s,\cdot, \cdot) \|_{L^1([0, 1]^2)}+ 
    \| [\beta^n [W -
      W_{\G_n} ] (s,\cdot, \cdot) \|_{L^1([0, 1]^2)}\,.
\end{equation*}
The first term in the above sum can be further estimated by 
$$
  \| [\beta -\beta^n]W (s,\cdot, \cdot) \|_{L^1([0, 1]^2)} =
   \int_0^1 |\beta - \beta^n|(t, y) \int_0^1 W(s, x, y) dx dy 
   \stackrel{\eqref{bound_W}}{\leq}
  K_d \| [\beta - \beta^n](s, \cdot) \|_{L^1(0, 1)},
$$
while to control the second term it suffices to note that, by construction, 
$\| \beta^n \|_{\infty}\leq \| \beta \|_{\infty} $ and this implies 
$\| [\beta^n (W -
      W_{\G_n} )] (s,\cdot, \cdot) \|_{L^1([0, 1]^2)} \leq \| \beta \|_{\infty}  \| [W -
      W_{\G_n} ] (s,\cdot, \cdot) \|_{L^1([0, 1]^2)}$. 
      
      The rest of the argument  is basically the same as for {\sc Case 1} in \S\ref{sss:condet}, the main difference is that to control $\| [W -
      W_{\G_n} ] (s,\cdot, \cdot) \|_{L^1([0, 1]^2)}$ we use~\eqref{e:convl1} from Lemma~\ref{l:average} instead of~\eqref{e:convl2}. 
      
\begin{remark}
    As pointed out in Remark~\ref{rem:si_Linfty}, from the point of view of applications it would be also reasonable to consider the SIR model~\eqref{SIR'_G}. As a matter of fact, the proof of Theorem~\ref{thm:det} does extend to~\eqref{SIR'_G}. More precisely, under the same assumptions as in Theorem~\ref{thm:det}, the solution $(s^n, i^n, r^n)$ of \eqref{SIR'_G} satisfies~\eqref{e:cisiamo}, provided $s,i,r:[0,T]\times[0,1]\to[0,1]$ is the  distributional solution of 
 	\[
 	\begin{cases}
 	\displaystyle{\partial_t s(t,x)=-s(t,x)\int_0^1\beta(t,y)W(t,x,y)i(t,y)\,dy-\beta(t,x)s(t,x)i(t,x)} & \\
 	\displaystyle{\partial_t i(t,x)=s(t,x)\int_0^1\beta(t,y)W(t,x,y)i(t,y)\,dy+   
\beta(t,x)s(t,x)i(t,x)-\gamma(t,x)i(t,x)} & \qquad\qquad x\in[0,1]\\
 	\partial_tr(t,x)=\gamma(t,x)i(t,x) \phantom{\displaystyle{\int}}
 	\end{cases}
 	\]
coupled with the initial condition~\eqref{e:idSIR_W}. 
 \end{remark}  
 
 \section{Random approximation: convergence analysis of the random samplings}
\label{sec:proof2}
% \section{Proof of Theorem \ref{thm:ran}, Corollary~\ref{c:ran} and Corollary~\ref{c:ran3}}
%\label{sec:proof2}
In this section we present the convergence analysis for the random approximation as stated in Theorem \ref{thm:ran}.

The proof is inspired by the argument used in \cite{Med19} for Kuramoto systems. In short, at each step of the approximation, we will introduce an auxiliary deterministic SIR system on an averaged graph and split the total error of the approximation in two components: a random error (between the original random model and the auxiliary averaged one) and a deterministic error (between the averaged model and the limit one). 

The exposition in \S\ref{sec:proof2} is organized as follows. In \S\ref{subsec:prel} we review some preliminary probability results. In \S\ref{subsec:average} we pass from the random to the averaged model.
The proof of Theorem \ref{thm:ran} is carried out in \S\ref{ss:ptran} for {\sc Case 1}  and in  \S\ref{ss:pcran} for {\sc Case 2}.
 
\subsection{Preliminary results}
\label{subsec:prel}
We begin by stating (without proof) three classical results in Probability Theory. The first one is the Borel--Cantelli Lemma (see for instance \cite[Chapter 3]{C}).
\begin{lemma}[Borel-Cantelli]
Let $\{ \mathcal{E}_n \}_{n \in \nn}$ be a sequence of events in a probability space $(\Omega, \mathcal F, \mathbb P)$ and set $\mathcal{E}:=\limsup_{n\to+\infty}\mathcal{E}_n = \bigcap_{k =1}^\infty \bigcup_{n \ge k} \mathcal{E}_n$. If $\sum_{n\in\nn}\mathbb{P}(\mathcal{E}_n)<\infty$, then $\mathbb{P}\left(\mathcal{E}\right)=0$.
\end{lemma}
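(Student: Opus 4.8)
The plan is to give the standard one-line argument resting on countable subadditivity together with the convergence of the tail of a convergent series. First I would record that $\mathcal{E}$ is genuinely an event: writing $F_k := \bigcup_{n \ge k} \mathcal{E}_n$, each $F_k$ is a countable union of members of $\mathcal{F}$ and hence lies in $\mathcal{F}$, so $\mathcal{E} = \bigcap_{k=1}^\infty F_k$ is a countable intersection of events and is therefore $\mathcal{F}$-measurable. This makes $\mathbb{P}(\mathcal{E})$ well defined and legitimises the manipulations below.

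The key estimate is then obtained as follows. Since $\mathcal{E} = \bigcap_{k} F_k \subseteq F_k$ for every fixed $k$, monotonicity of the probability measure gives $\mathbb{P}(\mathcal{E}) \le \mathbb{P}(F_k)$, while countable subadditivity applied to the union defining $F_k$ yields $\mathbb{P}(F_k) \le \sum_{n \ge k} \mathbb{P}(\mathcal{E}_n)$. Chaining these two inequalities produces, for \emph{every} $k \in \nn$,
\[
   \mathbb{P}(\mathcal{E}) \le \sum_{n \ge k} \mathbb{P}(\mathcal{E}_n).
\]
Finally I would invoke the hypothesis $\sum_{n \in \nn} \mathbb{P}(\mathcal{E}_n) < \infty$: the tail of a convergent series of nonnegative terms vanishes, so $\sum_{n \ge k} \mathbb{P}(\mathcal{E}_n) \to 0$ as $k \to +\infty$. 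Letting $k \to +\infty$ in the displayed bound forces $\mathbb{P}(\mathcal{E}) \le 0$, and since a probability is nonnegative we conclude $\mathbb{P}(\mathcal{E}) = 0$.

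There is no genuine obstacle here, as this is the classical (first) Borel--Cantelli Lemma; the argument uses only the axioms of a probability measure (monotonicity, countable subadditivity) and elementary properties of convergent series. The only point requiring a moment's care is the measurability of $\mathcal{E}$, which I address at the outset, and the fact that one must establish the bound uniformly in $k$ \emph{before} passing to the limit, rather than trying to bound $\mathbb{P}(\mathcal{E})$ directly by the full series. (One could equivalently phrase the last step via continuity of measure from above, noting $F_k \downarrow \mathcal{E}$ with $\mathbb{P}(F_1) < \infty$, but the subadditivity route is the most transparent.)
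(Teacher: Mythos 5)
Your proof is correct and complete: the chain $\mathbb{P}(\mathcal{E}) \le \mathbb{P}(F_k) \le \sum_{n \ge k} \mathbb{P}(\mathcal{E}_n)$ valid for every $k$, followed by letting $k \to +\infty$ to exploit the vanishing tail of the convergent series, is exactly the standard argument for the first Borel--Cantelli Lemma, and your preliminary remark on the measurability of $\mathcal{E}$ is a sensible inclusion. Note that the paper itself offers no proof of this statement --- it is quoted as a classical result with a citation to \cite[Chapter 3]{C} --- so there is nothing to compare against; your argument is the textbook one and nothing is missing.
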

The second tool is Chebyshev's inequality: let $X$ be a random variable  on a probability space $(\Omega, \mathcal F, \mathbb P)$ with bounded second moment. Then,
\begin{equation}
\label{e:cheb}
       \mathbb P \left(\left|X-\mathbb{E}X\right|\ge t\right) \leq \frac{{\rm Var}(X)}{t^2} \quad \text{for every }t>0, 
\end{equation}
where ${\rm Var}(X) : = \mathbb{E}\Big( \big( X - \mathbb{E}(X) \big)^2\Big)= \mathbb{E}(X^2) - \mathbb{E}(X)^2$. 

The third tool is Hoeffding's inequality: let  $X_1, \dots, X_m$ be independent randon variables  on a probability space $(\Omega, \mathcal F, \mathbb P)$ such that   $a \leq X_i \leq b$ almost surely for every $i=1, \dots, m$, for some $a,b\in\rr$. Then, for every $t>0$  it holds
\begin{equation}\begin{split}
\label{e:hoeff}
        & \mathbb{P} \left( \sum_{i=1}^m X_i -  \sum_{i=1}^m  \mathbb{E}(X_i)  \ge t \right)  
         \leq \exp\left[ - \frac{2 t^2}{ m(b-a)^2}\right], \quad \text{for every $t>0$,}\\&
         \mathbb{P} \left(\left|  \frac{1}{m}\sum_{i=1}^m X_i -  \frac{1}{m}\sum_{i=1}^m  \mathbb{E}(X_i)  \right| \ge t \right)  
         \leq \exp\left[ - \frac{2 mt^2}{ (b-a)^2}\right] \quad \text{for every $t>0$.}
\end{split}
\end{equation}
We now recall that, for $u^n$ as in~\eqref{uG->u01}, the $L^{2}$ and $L^{1}$ discrete norms are defined by 
\begin{equation}
\label{e:norma2}
\|u^n\|_{2}=\left(\f1n\sum_{j=1}^n |u^n_j|^2\right)^{1/2} \! \! \! \! \! \!\! , \qquad 
 \|u^n\|_{1}=\f1n\sum_{j=1}^n |u^n_j|,
\end{equation}
and  the discrete H\"older's inequality reads
\begin{equation}
\label{holderd}
     \sum_{i=1}^n x_i y_i \leq \left(  \sum_{i=1}^n x_i^2 \right)^{1/2} \left(  \sum_{i=1}^n y_i^2 \right)^{1/2} 
      \quad \text{for every $x_1, \dots, x_n, y_1, \dots, y_n \in \rr$.}
\end{equation}

Furthermore, note that, adapting the argument given in Lemma \ref{l:zerouno} to prove \eqref{e:01SIR_G}, if $A^n_{jk}$ is the random variable~\eqref{e:adjr1} and $(s^n, i^n, r^n)$ is the solution of the Cauchy problem~\eqref{SIR_G}, \eqref{e:idSIR_G}, written according to the notation~\eqref{uG->u01}, then 
\begin{equation}
\label{ran:bd}
   0 \leq s^n_j, i^n_j, r^n_j \leq 1, \quad 
   s^n_j+ i^n_j+ r^n_j = 1
 \quad  \text{almost surely and for a.e. $(t, x) \in [0, T] \times [0, 1]$}. 
\end{equation}

\subsection{From a random to a deterministic model: the SIR system on the averaged graph}
\label{subsec:average}
For any given $n \in \nn$ and starting from the sparse $W$-random graph $\G_n=n^\alpha\G(n,W,n^{-\alpha})$ (or $\G_n=\G(n,W)$) we consider the averaged graph, denoted by $\bar{\G}_n$, defined by setting
\begin{equation}
\label{avg_A}
     \bar A^n_{jk} : = \mathbb{E}(A^n_{jk}) \stackrel{\eqref{e:adjr1}}{=} n^\alpha < \widehat W^n>_{I^n_j \times I^n_k}, \quad \text{where $ \widehat W^n = \min \{ 1, n^{-\alpha} W \}\;.$ }
\end{equation} 
Using always the notation~\eqref{uG->u01} we denote by $(\bar s^n, \bar i^n, \bar r^n)$ the solution of the SIR system on $\bar{\G}_n$
\begin{equation}
\label{SIR_avg}
\begin{cases}
\displaystyle{\f{d\bar{s}_j^n(t)}{dt}=-\bar{s}_j^n(t)\f1n\sum_{k=1}^n \beta_k^n(t)\bar{A}_{jk}^n \bar{i}_k^n(t)}, & \\
\displaystyle{\f{d\bar{i}_j^n(t)}{dt}=\bar{s}_j^n(t)\f1n\sum_{k=1}^n\beta_k^n(t)
\bar{A}_{jk}^n\bar{i}_k^n(t)-\gamma_j^n(t)\bar
i_j^n(t) }& \qquad\qquad\qquad j=1,\,\dots,\,n,\\
\displaystyle{\f{d\bar{r}_j^n(t)}{dt}=\gamma_j^n(t)\bar i_j^n(t)\,, }
\end{cases}
\end{equation}
coupled with the initial conditions~\eqref{e:idSIR_G}. Note that the coefficients $\beta^n_j$ and $\gamma^n_j$ and the initial data $s^n_{j,0}, i^n_{j, 0}, r^n_{j, 0}$ are as in~\eqref{e:betagamma}. 
Note that the above system  \eqref{SIR_avg} is indeed a {\it deterministic} approximation to \eqref{SIR_W}.

From the definition of the averaged graph $\bar{\G_n}$, and depending on the regularity of $W$, we can derive the following estimates that will be useful for our analysis:
\begin{description}
\item[$\bullet$ {\sc Case 1:} ] if $W \in L^2 ([0, 1]^2;\rr_+)$ then 
\begin{equation}
\label{normal2d}\begin{split}
      \sum_{j, k=1}^n  \f{[\bar A^n_{jk}]^2}{n^2} & \stackrel{\eqref{avg_A}}{=}
       \sum_{j, k=1}^n \frac{1}{n^2 } \left( n^2 \int_{I^n_j} \int_{I^n_k} n^\alpha \widehat W^n (x, y) dx dy\right)^2 \\ &
         \stackrel{\text{Jensen}}{\leq}
         \sum_{j, k=1}^n \frac{1}{n^2}  n^{2} \int_{I^n_j} \int_{I^n_k} [ n^\alpha \widehat W^n (x, y)]^2 dx dy \leq 
         \sum_{j, k=1}^n  \int_{I^n_j} \int_{I^n_k} [  W(x, y)]^2 dx dy = \| W \|^2_2\,;
\end{split}
\end{equation}
\item[$\bullet$ {\sc Case 2:} ] if $W\in L^1 ([0, 1]^2;\rr_+)$ satisfies~\eqref{bound_W2} then 
\begin{equation}\begin{split}
\label{e:grado}
    \frac{1}{n}  \sum_{j=1}^n \bar A^n_{jk} & \stackrel{\eqref{avg_A}}{=}
       \frac{1}{n} \sum_{j=1}^n n^\alpha < \widehat W^n>_{I^n_j \times I^n_k}
      \leq    \frac{1}{n} \sum_{j=1}^n <  W>_{I^n_j \times I^n_k}
      =   \frac{1}{n}\sum_{j=1}^n n^2 \int_{I^n_j} \int_{I^n_k} W(x, y) dx dy  \\ &
    =  n \int_{I^n_j} \int_{0}^1 W(x, y) dx dy \stackrel{\eqref{bound_W2}}{\leq} K_{a}\,.
\end{split}
\end{equation}
\end{description}
Note furthermore that, by construction,
\begin{equation}
\label{media0}
       \mathbb{E} (   \bar A^n_{jk} -A^n_{jk} ) \stackrel{\eqref{avg_A}}{=} 0.
\end{equation}

\subsection{Proof of Theorem~\ref{thm:ran} {\sc (Case 1)}}\label{ss:ptran}
In what follows we always assume $W\in L^{2}([0,1]^2; \mathbb{R}_+)$.
We start by proving the following lemma, which provides an estimate (with probability one) on the difference between the random approximation $( s^n,  i^n,  r^n)$  and the deterministic approximation $(\bar s^n, \bar i^n, \bar r^n)$ solution of the associated averaged model.
\begin{lemma}
	\label{lem:G_to_average}
     Under the same assumptions as in the statement of Theorem~\ref{thm:ran} {\sc (Case 1)}, let  	
    $\bar s^n, \bar i^n, \bar r^n:[0,T]\times [0, 1]\to \rr$ be the solution of the Cauchy problem \eqref{SIR_avg},~\eqref{e:idSIR_G}. Then 
	\begin{equation} \label{e:diff}
	\lim_{n\to+\infty} \Big[ \| [s^n  - \bar s^n](t, \cdot) \|_{L^2(0, 1)}
     + \| [i^n  - \bar i^n](t, \cdot) \|_{L^2(0, 1)}+ \| [r^n  - \bar r^n](t, \cdot) \|_{L^2(0, 1)} \Big]=0
	\end{equation}
$\mathbb P$-almost surely and for a.e. $t \in [0, T]$.
\end{lemma}
\begin{proof}
We proceed according to the following steps. 

\smallskip
{\sc Step 1:} we subtract~\eqref{SIR_G} from~\eqref{SIR_avg}, multiply the result by $2 [\bar s^n_j - s^n_j]$, use discrete H\"older \eqref{holderd} to obtain 
\begin{equation*}
\begin{split}
&\f{d}{dt} [\bar s^n_j - s^n_j]^2 = - \underbrace{\f{2[\bar s^n_j - s^n_j]^2}{n} \sum_{k=1}^n \beta^n_k \bar A^n_{jk} \bar i_k^n}_{\ge 0}  
-\f{2s_j^n [\bar s^n_j - s^n_j]}{n} \sum_{k=1}^n  \beta^n_k \Big[ \bar A^n_{jk} \bar i^n_k - 
 A^n_{jk}  i^n_k \Big] \\ & \stackrel{\eqref{linftybk},\eqref{ran:bd}}{\leq} 
2 \| \beta \|_{\infty} \f{|\bar s^n_j - s^n_j|}{n} \sum_{k=1}^n  \bar A^n_{jk} | \bar i^n_k - i^n_k|
+ \f{2s_j^n [\bar s^n_j - s^n_j]}{n} \sum_{k=1}^n  \beta^n_k i^n_k \Big[ \bar A^n_{jk}  - 
 A^n_{jk}   \Big] \\ &
\stackrel{\eqref{holderd},\eqref{ran:bd}}{\leq}
2 \| \beta \|_{\infty} \f{|\bar s^n_j - s^n_j|}{n} 
\left( \sum_{k=1}^n  [\bar A^n_{jk}]^2 \right)^{1/2}
 \left( \sum_{k=1}^n   | \bar i^n_k - i^n_k|^2 \right)^{1/2}+2 [\bar s^n_j - s^n_j] Z^n_j (t)\,,
\end{split}
\end{equation*}
where we have set
\begin{equation} \label{e:zetaenne}
    Z^n_j (t) : =  \f{s^n_j (t)}{n} \sum_{k=1}^n  \beta^n_k (t) i^n_k(t) \Big[ \bar A^n_{jk}  - 
 A^n_{jk}   \Big].
\end{equation}
Hence, summing over all nodes and using the discrete H\"older Inequality~\eqref{holderd}, we have 
\begin{equation} \label{l2ds}
\begin{split}
& \f{d}{dt}\sum_{j=1}^n [\bar s_j^n - s_j^n]^2  \leq 
2 \| \beta \|_{\infty} \left( \sum_{k=1}^n   | \bar i_k^n - i_k^n|^2 \right)^{1/2}
\sum_{j=1}^n|\bar s^n_j - s_j^n|
\left( \sum_{k=1}^n  \f{[\bar A^n_{jk}]^2}{n^2} \right)^{1/2} +
2 \sum_{j=1}^n
 [\bar s^n_j - s_j^n] Z^n_j \\
& \stackrel{\eqref{holderd}}{\leq}
2 \| \beta \|_{\infty} \left( \sum_{k=1}^n   | \bar i_k^n - i_k^n|^2 \right)^{1/2}   \! \!  \! 
\left( \sum_{j=1}^n|\bar s^n_j - s_j^n|^2\right)^{1/2} \! \! \! 
\underbrace{\left( \sum_{j, k=1}^n \f{[\bar A^n_{jk}]^2}{n^2} \right)^{1/2}}_{\leq \| W\|_{2} \; \text{by~\eqref{normal2d}}}     \! \!\!  \! \! \! \! \! +
2 \left( \sum_{j=1}^n
 [\bar s^n_j - s_j^n]^2 \right)^{1/2} \! \! \!   \! 
 \left( \sum_{j=1}^n (Z^n_j)^2\right)^{1/2}\\ & 
 \stackrel{\text{Young}}{\leq}
\| \beta \|_{\infty} \| W\|_{2} \sum_{j=1}^n|\bar i^n_j - i^n_j|^2 +
\| \beta \|_{\infty}  \Big[\| W\|_{2} + 1\Big]  \sum_{j=1}^n|\bar s^n_j - s_j^n|^2 
+  \sum_{j=1}^n  (Z^n_j)^2\,.
\end{split} 
\end{equation}
Next, we use an argument similar to the one in the proof of Proposition \ref{p:uniex} in \S\ref{ss:puni}. We sum the first two equations in~\eqref{SIR_G}, subtract it from the sum of the first two equations in~\eqref{SIR_avg} and set $\bar v^n_j : = \bar s^n_j + \bar i^n_j$, $v^n_j : =  s^n_j +  i^n_j$ to get
\begin{equation}\label{derdif}
   \frac{d}{dt} [\bar v^n_j - v^n_j] = - \gamma^n_j [\bar v^n_j - v^n_j] + \gamma^n_j [\bar s^n_j - s^n_j]\,.
\end{equation}
Multiplying the above equation by $2[\bar v^n_j - v^n_j]$ we find 
\begin{equation*}
\begin{split}
          \frac{d}{dt} [\bar v^n_j - v^n_j]^2 & = - \underbrace{2 \gamma^n_j [\bar v^n_j - v^n_j]^2}_{\ge 0}
          +2  \gamma^n_j [\bar s^n_j - s^n_j] [\bar v^n_j - v^n_j]
         \stackrel{\eqref{linftybk},\text{Young}}{\leq}
         \| \gamma \|_{\infty} \Big[  [\bar s^n_j - s^n_j]^2 +  [\bar v^n_j - v^n_j]^2 \Big]\;,
\end{split}
\end{equation*}
which in turn entails
\begin{equation}\label{l2dv}
\begin{split}
          \frac{d}{dt}\sum_{j=1}^n [\bar v^n_j - v^n_j]^2 \leq 
         \| \gamma \|_{\infty} \left[ \sum_{j=1}^n [\bar s^n_j - s^n_j]^2 +  \sum_{j=1}^n
         [\bar v^n_j - v^n_j]^2 \right] .
\end{split}
\end{equation}

{\sc Step 2:} we combine the estimates~\eqref{l2ds} and~\eqref{l2dv}, and note that
$$
     \sum_{k=1}^n   | \bar i^n_k - i^n_k|^2 \leq  
     2 \sum_{k=1}^n   | \bar v^n_k - v^n_k|^2 + 2 \sum_{k=1}^n   | \bar s^n_k - s^n_k|^2\,.
$$
Recalling now the expression of the $L^2$ norm~\eqref{e:norma2} and applying Gr\"onwall Lemma,  we eventually arrive at
\begin{equation}\label{groavg}
\begin{split}
      \| [\bar v^n - v^n](t, \cdot) \|^2_{L^2(0, 1)} & +  \| [\bar s^n - s^n](t, \cdot) \|^2_{L^2(0, 1)} 
     \\ & \leq \exp[ \bar B t] \frac{1}{n} \int_0^t \exp[- \bar B \tau ]  \sum_{j=1}^n    (Z^n_j)^2(\tau) d \tau 
     \quad \text{almost surely and for every $t \in [0, T],$}
\end{split}
\end{equation}
for a suitable constant $\bar B$ only depending on $\| \beta \|_{\infty}, \| \gamma \|_{\infty}$ and 
$\| W \|_2$. Owing to the equalities $\bar v^n_j : = \bar s^n_j + \bar i^n_j$, $v^n_j : =  s^n_j +  i^n_j$, $\bar r^n_j = 1 - \bar v^n_j$, $r^n_j = 1 - v^n_j$, to establish~\eqref{e:diff} it suffices to show that the right hand side of~\eqref{groavg} vanishes in the $n \to + \infty$ limit, almost surely. To this end we notice that
\begin{equation} 
	\label{qtc} 
	\begin{split}
    \frac{1}{n}\int_0^T& \exp[-\bar B s] \sum_{j=1}^n  (Z^n_j)^2 (s) ds \stackrel{\eqref{e:zetaenne}}{=}
     \frac{1}{n^3} \sum_{j=1}^n   \sum_{k=1}^n | \bar A^n_{jk} -A^n_{jk} |^2 \int_0^T \exp[-\bar B\tau]  [s^n_j (\tau)]^2
     [\beta^n_k i^n_k]^2(\tau)  d \tau \\
&    + \frac{2}{n^3} \sum_{j=1}^n 
        \sum_{\substack{k, \ell=1\\ k \neq \ell}}^n
     ( \bar A^n_{jk} -A^n_{jk} ) ( \bar A^n_{j \ell} -A^n_{j \ell} )
      \int_0^T  [s^n_j (\tau)]^2
     [\beta^n_k  i^n_k \beta^n_\ell i^n_\ell] (\tau) \exp[-\bar B\tau]d \tau\,.
   \end{split}
\end{equation}
To control the first term in the above sum, we point out that almost surely we have
\[
\begin{split}
   0 \leq&\, \frac{1}{n^3} \int_0^T \exp[-\bar B\tau] \sum_{j=1}^n  [s^n_j (\tau)]^2\Big[  \sum_{k=1}^n [\beta^n_k i^n_k]^2(\tau)| \bar A^n_{jk} -A^n_{jk} |^2 \Big] d \tau\\ &\stackrel{\eqref{ran:bd},\eqref{linftybk}}{\leq}
   \| \beta \|_{\infty}  \frac{1}{n^3}  \sum_{j, k=1}^n | \bar A^n_{jk} -A^n_{jk} |^2   \int_0^T \exp[-\bar B\tau] d \tau. 
\end{split}
\]
Note that $\{| \bar A^n_{jk} -A^n_{jk} |^2\}_{j,k=1, \dots, n}$ are independent random variables and, thanks to~\eqref{e:adjr1} and \eqref{avg_A}, satisfy
$$
    0 \leq | \bar A^n_{jk} -A^n_{jk} |^2 \leq 
    n^{2 \alpha}  \qquad \text{almost surely}
$$
together with
\begin{equation} \begin{split} \label{e:media}
    \mathbb{E} ( | \bar A^n_{jk} -A^n_{jk} |^2) & =
    \mathbb{E} ([A^n_{jk}]^2 ) - [\mathbb{E}(A^n_{jk})]^2 \stackrel{\eqref{e:adjr1}}{=} 
    n^{2 \alpha} < \widehat W^n >_{I^n_j \times I^n_j} [1-  < \widehat W^n >_{I^n_j \times I^n_j}] 
    \leq n^{2 \alpha}.
\end{split} 
\end{equation}
Hoeffding's inequality~\eqref{e:hoeff} then yields
\begin{equation} \label{hoeff1}
    \mathbb{P} \left( \sum_{j, k=1}^n | \bar A^n_{jk} -A^n_{jk} |^2 - 
    \sum_{j, k=1}^n \mathbb{E}( | \bar A^n_{jk} -A^n_{jk} |^2) \ge t
 \right) \leq 
  \exp\left( - 2 \left[\f{ t}{n^{2 \alpha +1}} \right]^2 \right)\;.
\end{equation}
Setting $t : = n^\beta$ with $\beta \in \,] 1 + 2\alpha,  3[$ in \eqref{hoeff1}, we apply Borel-Cantelli Lemma to the event 
$$
   \mathcal E_n : =  \left\{\sum_{j, k=1}^n | \bar A^n_{jk} -A^n_{jk} |^2 
  \ge \sum_{j, k=1}^n \mathbb{E}( | \bar A^n_{jk} -A^n_{jk} |^2) + n^\beta\right\}\;,
$$
to obtain
\begin{equation*}
\begin{split}
    \lim_{n \to +\infty} & n^{-3} \sum_{j, k=1}^n | \bar A^n_{jk} -A^n_{jk} |^2 
  \leq \lim_{n \to + \infty}
      n^{-3} \sum_{j, k=1}^n \mathbb{E}( | \bar A^n_{jk} -A^n_{jk} |^2) + \lim_{n \to + \infty} n^{\beta-3} \\
   & \stackrel{\eqref{e:media}}{\leq} \lim_{n \to + \infty}
      n^{2 \alpha -1} +  \lim_{n \to + \infty} n^{\beta-3} =0
\qquad\text{almost surely}.
\end{split}
\end{equation*}
We now turn our attention to the second term in~\eqref{qtc}. We first observe that, owing to \eqref{ran:bd} and \eqref{linftybk}, we have 
$$
   0 \leq  \int_0^T  [s^n_j (\tau)]^2
     [\beta^n_k  i^n_k \beta^n_\ell i^n_\ell] (\tau) \exp[-\bar B\tau] 
   d \tau \leq \| \beta \|_{\infty} \int_0^T \exp[-\bar B\tau] 
   d \tau \qquad \text{almost surely}.
$$
Next, notice that for $k  \neq \ell$ the random variables
$  \bar A^n_{jk} -A^n_{jk} $ and $  \bar A^n_{j \ell} -A^n_{j \ell} $ are independent, which together with the linearity of the expectation implies 
\begin{equation*} 
   \mathbb{E}  \left(\sum_{j=1}^n  \; \sum_{k, \ell=1, k \neq \ell}^n
     ( \bar A^n_{jk} -A^n_{jk} )(  \bar A^n_{j \ell} -A^n_{j \ell} ) \right) =
     \sum_{j=1}^n \; \sum_{k, \ell=1, k \neq \ell}^n \mathbb{E} \big(  \bar A^n_{jk} -A^n_{jk}  \big)
 \mathbb{E} \big(    \bar A^n_{j \ell} -A^n_{j \ell} 
    \big) \stackrel{\eqref{media0}}{=} 0\;.
\end{equation*}
Hence, Chebyshev's inequality~\eqref{e:cheb} yields
\begin{equation} \label{e:cheb2}
    \mathbb{P} \left(\sum_{j=1}^n  \; \sum_{k, \ell=1, k \neq \ell}^n
     ( \bar A^n_{jk} -A^n_{jk} )(\bar A^n_{j \ell} -A^n_{j \ell} ) \ge t  \right) \leq  
   \frac{1}{t^2} \mathbb{E} \left( \left[\sum_{j=1}^n  \; \sum_{k, \ell=1, k \neq \ell}^n
     ( \bar A^n_{jk} -A^n_{jk} )(  \bar A^n_{j \ell} -A^n_{j \ell} )\right]^2\right).
\end{equation}
The linearity of the expectation allows us to write
\begin{equation*}
\begin{split}
          \mathbb{E}& \left( \left[\sum_{j=1}^n  \; \sum_{k, \ell=1 k \neq \ell}^n
     [ \bar A^n_{jk} -A^n_{jk} ]  [ \bar A^n_{j \ell} -A^n_{j \ell} ] \right]^2\right)=
          \sum_{j=1}^n  \; \sum_{\substack{k, \ell=1\\ k \neq \ell}}^n 
      \mathbb{E}([\bar A^n_{jk} -A^n_{jk} ]^2 [ \bar A^n_{j \ell} -A^n_{j \ell} ]^2) \\ & +
     2 \sum_{i, j=1}^n  \; \sum_{\substack{k, \ell, h, m=1\\ k, \ell, h, m \in \mathcal I} }^n 
      \mathbb{E}(  [ \bar A^n_{jk} -A^n_{jk} ][ \bar A^n_{j \ell} -A^n_{j \ell} ]
       [ \bar A^n_{ih} -A^n_{ih} ][  \bar A^n_{i m} -A^n_{j m} ] )
\end{split}
\end{equation*}
with the set of indices $ \mathcal I := \big\{ k, \ell, h, m=1, \dots, n: k \neq \ell, h \neq m, [k \neq h \lor  \ell \neq m] \land 
     [k \neq m \lor  \ell \neq h] 
     \big\}$. Observe that the independence of $\bar A^n_{jk} -A^n_{jk}$ and \eqref{media0} guarantee that all the terms in the second sum above vanish. Precisely, if $k \neq \ell, h \neq m, k \neq h, k \neq m$
\begin{equation*}
    \mathbb{E}(( \bar A^n_{jk} -A^n_{jk} )(\bar A^n_{j \ell} -A^n_{j \ell} ) 
       ( \bar A^n_{ih} -A^n_{ih} )( \bar A^n_{i m} -A^n_{i m} ) )   \! =  \! 
    \mathbb{E}( \bar A^n_{jk} -A^n_{jk} )\mathbb{E} (\bar A^n_{j \ell} -A^n_{j \ell} ) 
       \mathbb{E} ( \bar A^n_{ih} -A^n_{ih} )  \mathbb{E} ( \bar A^n_{i m} -A^n_{i m} ) 
   \!  \stackrel{\eqref{media0}}{=}   \! 0,
\end{equation*}
and arguing similarly one finds 
\begin{equation} \label{e:zeroI}
    \mathbb{E}(  ( \bar A^n_{jk} -A^n_{jk} )(\bar A^n_{j \ell} -A^n_{j \ell} ) 
       ( \bar A^n_{ih} -A^n_{ih} )( \bar A^n_{i m} -A^n_{i m} ) )=0 \qquad \text{for every $(k, \ell, h, m) \in \mathcal I$}.
\end{equation}
As for the terms in the first sum, since $k \neq \ell$, the independence together with  \eqref{e:media} gives
\begin{equation} \label{mediaq}
      \mathbb{E}([ \bar A^n_{jk} -A^n_{jk} ]^2 [\bar A^n_{j \ell} -A^n_{j \ell} ]^2) =
       \mathbb{E}([ \bar A^n_{jk} -A^n_{jk} ]^2)\mathbb{E} ( [ \bar A^n_{j \ell} -A^n_{j \ell} ]^2) 
     \stackrel{\eqref{e:media}}{\leq} n^{4 \alpha}. 
\end{equation}
Plugging~\eqref{mediaq} and~\eqref{e:zeroI} into~\eqref{e:cheb2} we arrive at 
$$ 
   \mathbb{P} \left( n^{-3} \sum_{j=1}^n  \; \sum_{k, \ell=1, k \neq \ell}^n
     | \bar A^n_{jk} -A^n_{jk} |  | \bar A^n_{j \ell} -A^n_{j \ell} | \ge t  \right) \leq  
   \frac{1}{n^{6} t^2} n^{4 \alpha + 3 } = \frac{1}{t^2} n^{4 \alpha -3 }\,.
$$
By setting $t : = n^{\sigma}$, with $\sigma \in\, ]2 (\alpha -1), 0[$,  the right hand side of the above expression boils down to $n^{4 \alpha -3 -2 \sigma}.$ Then, since $4 \alpha -3 -2 \sigma < -1$  we can apply the Borel-Cantelli Lemma and conclude that 
$$
    \lim_{n \to + \infty} n^{-3} \sum_{j=1}^n  \; \sum_{k, \ell=1, k \neq \ell}^n
     | \bar A^n_{jk} -A^n_{jk} |  | \bar A^n_{j \ell} -A^n_{j \ell} |
    \leq   \lim_{n \to + \infty} n^{\sigma} =0 \qquad \qquad\text{almost surely.}
$$
This eventually implies that the right hand side of~\eqref{groavg} vanishes almost surely 
in the $n \to + \infty$ limit and completes the proof of Lemma~\ref{lem:G_to_average}. 
\end{proof}
We consider the splitting of the error
$$
s-s^n=(s-\bar s^{n}) +(\bar s^{n} -s^{n}) \quad  i-i^n=(i-\bar i^{n}) +(\bar i^{n} -i^{n}) \qquad r-r^n=(r-\bar r^{n}) +(\bar r^{n} -r^{n} )\;.
$$
Hence, by virtue of triangle inequality and  Lemma~\ref{lem:G_to_average}, to conclude the proof of Theorem \ref{thm:ran} it suffices to show that
\begin{equation*}
	\lim_{n\to+\infty} \Big[ \| [s  - \bar s^n](t, \cdot) \|_{L^2(0, 1)}
     + \| [i  - \bar i^n](t, \cdot) \|_{L^2(0, 1)}+ \| [r  - \bar r^n](t, \cdot) \|_{L^2(0, 1)} \Big]=0
     \quad \text{for every $t \in [0, T]$}.
	\end{equation*}
Observe that $(\bar s^n, \bar i^n, \bar r^n)$ is a deterministic approximation (with a particular sampling of the graphon $W$) of \eqref{SIR_W} and in particular 	the proof of Lemma~\ref{l:gronwall} works  for $(\bar s^n, \bar i^n, \bar r^n)$ provided we replace $W_{\G_n}$ by the step graphon $W_{\bar \G_n}$ of the averaged graph $\bar \G_n$ (with $(\bar A_{jk})_{j, k=1, \dots, n}$ defined in~\eqref{avg_A})) and guarantee that $W_{\bar \G_n}$ converges to $W$ in the $L^{2}$-norm (corresponding to \eqref{e:beta} in~\S\ref{sss:condet}). Therefore, arguing exactly as  in \S\ref{sss:condet}, the only point we
  need to show is that
\begin{equation} \label{convbarg}
    \lim_{n \to + \infty} \| W - W_{\bar \G_n} \|_2=0\,.
\end{equation}
Now recall $\bar \G_n$  is the averaged graph of the scaled sparse $W$-random graph defined through \eqref{e:adjr1} and based on the trimmed graphon with target edge density $n^{\alpha}$. We now introduce an auxiliary graph, denoted by $\widetilde \G_n$, defined through the adjacency matrix as in ~\eqref{e:adj}, i.e., the $L^{2}$-orthogonal projection of $W$ on the space of piecewise constant functions on $[0,1]^2$. 
Then, considering the associated step-graphons, the standard triangle inequality gives
$$
    \| W - W_{\bar \G_n} \|_{2} \leq \| W - W_{ \widetilde \G_n} \|_{2}
+ \| W_{\widetilde \G_n} - W_{\bar \G_n}  \|_{2}\;.
$$
 Owing to estimate~\eqref{e:convl2} from Lemma~\ref{l:average}, the first term in the above sum vanishes in the $n \to + \infty$ limit. To control the limit of the second term, Jensen's inequality directly gives
\begin{equation*}
\begin{split}
     \| W_{\widetilde \G_n}& - W_{\bar \G_n}  \|_{2}  =
    \sum_{j, k=1}^n
    \f{ 
      |< W>_{I^n_j \times I^n_k}  - n^\alpha  < \widehat W^n >_{I^n_j \times I^n_k}|^2}{n^2}
  \\ & \stackrel{\text{Jensen}}{\leq} \!
     \sum_{j, k=1}^n \iint_{I^n_j \times I^n_k}\big[ W - \min\{n^\alpha, W\} \big]^2(x, y) dx dy 
    \! = \! \iint_{[0, 1]^2}\big[ W - \min\{n^\alpha, W\} \big]^2(x, y) dx dy\,.
\end{split}
\end{equation*}
As $\lim_{n \to + \infty} \big[ W - \min\{n^\alpha, W\} \big]^2(x, y)=0$ for a.e. $(x, y) \in [0, 1]^2$ and $|W - \min\{n^\alpha, W\}| \leq 2 W$ on $[0,1]^2$, then Lebesgue's Dominated Convergence Theorem yields~\eqref{convbarg} and this concludes the proof of Theorem \ref{thm:ran} {\sc (Case 1)}. 

\subsection{Proof of Theorem \ref{thm:ran} {\sc (Case 2)}} \label{ss:pcran}
In this subsection we assume $W\in L^{1}([0,1];\mathbb{R}_{+})$ to satisfy \eqref{bound_W2}.
The proof in this case follows the same lines as that of {\sc Case 1} above, so we only provide a sketch focusing on the points where the arguments are different.
We first show that 
\begin{equation} \label{e:diff3}
	\lim_{n\to+\infty} \Big[ \| [s^n  - \bar s^n](t, \cdot) \|_{L^1(0, 1)}
     + \| [i^n  - \bar i^n](t, \cdot) \|_{L^1(0, 1)}+ \| [r^n  - \bar r^n](t, \cdot) \|_{L^1(0, 1)} \Big] \! \! = \!0
	\end{equation}
$\mathbb P$-almost surely and for a.e. $t \in [0, T]$.
To this end, we subtract~\eqref{SIR_G} from~\eqref{SIR_avg}, multiply the result by $\mathrm{sign}(\bar s^n_j - s^n_j)$ and get 
\begin{equation*}
\begin{split}
\f{d}{dt} |\bar s^n_j - s^n_j| & \leq - \underbrace{\f{|\bar s^n_j - s^n_j|}{n} \sum_{k=1}^n \beta^n_k \bar A^n_{jk} \bar i_k^n}_{\ge 0}  
+  \f{|s_j^n| }{n} \sum_{k=1}^n  \beta^n_k \bar A^n_{jk} |\bar i^n_k  - i^n_k|
+
 \f{\mathrm{sign}[\bar s^n_j - s^n_j ]s_j }{n} \sum_{k=1}^n  \beta^n_k i^n_k  (\bar A^n_{jk}  - A^n_{jk}) 
    \\ & \stackrel{\eqref{ran:bd},\eqref{linftybk}}{\leq} 
\f{ \| \beta \|_{\infty}}{n} \sum_{k=1}^n  \bar A^n_{jk} | \bar i_k^n - i_k^n|+ H^n_j\,,
\end{split}
\end{equation*}
where we have set 
$$
   H^n_j (t): = \f{\mathrm{sign}[\bar s^n_j - s^n_j ]s_j^n }{n} \sum_{k=1}^n  \beta^n_k i^n_k  (\bar A^n_{jk}  - A^n_{jk})\,.
$$
This implies
\begin{equation}\label{sommaj}
\begin{split}
&\f{d}{dt}\sum_{j=1}^n |\bar s_j^n - s_j^n| \leq 
\f{ \| \beta \|_{\infty}}{n} \sum_{k=1}^n  | \bar i_k^n - i_k^n|\sum_{j=1}^n  \bar A^n_{jk}+
   \sum_{j=1}^n H^n_j
   \stackrel{\eqref{e:grado}}{\leq}
 K_a \| \beta \|_{\infty} \sum_{k=1}^n  | \bar i_k^n - i_k^n| +    \sum_{j=1}^n H^n_j\,.
\end{split}
\end{equation}
We then set   $\bar v^n_j : = \bar s^n_j + \bar i^n_j$, $v^n_j : =  s^n_j +  i^n_j$ and point out that owing to~\eqref{derdif} we have 
\begin{equation*}
 \begin{split}
  \frac{d}{dt} |\bar v^n_j - v^n_j| & \leq  - \underbrace{\gamma^n_j |\bar v^n_j - v^n_j|}_{\ge 0}
          +  \gamma^n_j |\bar s^n_j - s^n_j| \,,
\end{split}
\end{equation*}
so that summing the above expression over $j$, recalling~\eqref{sommaj}, using the inequality $|\bar i^n_j - i^n_j| \leq |\bar v^n_j - v^n_j| + |\bar s^n_j - s^n_j|$,   the definition of the $L^1$ norm as in \eqref{e:norma2} and applying Gr\"onwall's Lemma we arrive at 
\begin{equation}
\label{grodegree}\begin{split}
        &    \| [s^n  - \bar s^n](t, \cdot) \|_{L^1(0, 1)} 
     + \| [i^n  - \bar i^n](t, \cdot) \|_{L^1(0, 1)} \leq  \f{1}{n}  \exp[\bar D t ]
     \int_0^T \exp[-\bar D \tau]   \sum_{j=1}^n H^n_j(\tau) d\tau  \\ & = \f{1}{n^2} \sum_{j, k=1}^n 
    (\bar A^n_{jk}  - A^n_{jk})   \exp[\bar D t ]  \int_0^T \exp[- \bar D \tau] 
     \mathrm{sign}[\bar s^n_j - s^n_j ] [s_j^n  \beta^n_k i^n_k] (\tau) d \tau
   \qquad \text{almost surely}. 
\end{split}
\end{equation}
In the above expression $\bar D$ is a suitable constant depending only on $\| \beta \|_{\infty}, \| \gamma\|_{\infty}$ and $K_a$. 
To control the right hand side of~\eqref{grodegree}, note that  
\begin{equation} \label{intlim}
  \left| \int_0^T \exp[- \bar D \tau] 
     \mathrm{sign}[\bar s^n_j - s^n_j ][s_j^n  \beta^n_k i^n_k] (\tau) d \tau \right|
   \stackrel{\eqref{ran:bd},\eqref{linftybk}}{\leq} \| \beta \|_{\infty}  \int_0^T \exp[- \bar D \tau] d \tau
   \qquad \text{almost surely}. 
\end{equation}
Also, 
$$
   \mathbb{E} \left( \sum_{j, k=1}^n 
    (\bar A^n_{jk}  - A^n_{jk} ) \right)  = \sum_{j, k=1}^n \mathbb{E}(
    \bar A^n_{jk}  - A^n_{jk} ) \stackrel{\eqref{media0}}{=}0
$$
and 
$$
    n^\alpha  [  < \widehat W^n >_{I^n_j \times I^n_k} -1]  \leq
     \bar A^n_{jk}  - A^n_{jk} \leq n^\alpha < \widehat W^n >_{I^n_j \times I^n_k} \;.
$$
Hence, Hoeffding's inequality~\eqref{e:hoeff} yields 
$$
    \mathbb{P} \left(  \f{1}{n^2}  \left|\sum_{j, k=1}^n 
   ( \bar A^n_{jk}  - A^n_{jk})  \right| \ge t  \right) \leq \exp\left( -2 \left[ \frac{n^2 t}{n^{1 + \alpha}}\right]^2\right).
$$
By choosing now $t = n^\eta$ with $\eta \in\, ]\alpha-1, 0[$ and applying Borel-Cantelli Lemma 
we obtain 
$$ 
    \lim_{n \to + \infty} \f{1}{n^2}  \left|\sum_{j, k=1}^n 
    (\bar A^n_{jk}  - A^n_{jk})  \right| \leq  \lim_{n \to + \infty} n^\eta =0
    \qquad\text{almost surely}. 
$$
Coupling with ~\eqref{intlim} implies that the right hand side of~\eqref{grodegree} vanishes almost surely in the $n \to + \infty$ limit, establishing~\eqref{e:diff3}. The rest of the proof of Theorem \ref{thm:ran} {\sc (Case 2)} follows mutatis mutandis as in the proof of Theorem \ref{thm:ran} {\sc (Case 1)} and is therefore omitted. 

\section*{Acknowledgments}
This work was motivated by a question posed by Giovanni Naldi, whom the authors wish to thank. They also wish to thank Giuseppe Patan\`e and all the colleagues of the IMATI workgroup on COVID-19
for interesting discussions, and Enrico Priola and Giuseppe Savar\'e for clarifying remarks on time-dependent random variables. S.D. is supported by the INdAM GNAMPA 2023 Project {\em Modelli nonlineari in presenza di interazioni puntuali} and by the PRIN 2022 Project E53D23005450006. L.V.S. is a member of the GNAMPA  group of INDAM, of the PRIN 2020 Project 20204NT8W4, PRIN 2022 Project 2022YXWSLR, PRIN 2022 PNRR Project P2022XJ9SX, and of the CNR FOE 2022 Project STRIVE. 
% \begin{figure}[!ht]
%	\hskip -0.3cm 
%	\includegraphics[scale=0.5]{solXX02.eps}
%	\caption{Approximation to the infected population  $i^n$ with the time-dependent $W$ in \S\ref{subsection:fun}.}
%	\label{f:td1}
%	\end{figure}
% \begin{figure}
%		\centering
%		
%		\begin{tikzpicture}[xscale=0.5,yscale=0.5]
%			
%			\node at (2,0) {\includegraphics[width=2cm]{GrX1WMM0a}};
%			\node at (22,0) {\includegraphics[width=2cm]{GrX2WA4a}};
%			\draw[thick, ->] (5.5,0)--(18.5,0);
%			\node at (12,4.2) {\includegraphics[width=5.5cm]{zoomXX1aa}};
%			\node at (11.5,-8.5) {\includegraphics[width=2cm]{GrX3WW1a}};
%			\node at (20,-7) {\includegraphics[width=5.5cm]{zoomXX2a}};
%			\draw[thick, ->] (19,-1)--(12.5,-6);
%			\node at (4,-12.5) {\includegraphics[width=5.5cm]{zoomXX3aa}};
%			
%			\node at (11.5,-18) {\includegraphics[width=2cm]{GrX4We5a}};
%			\draw[thick, ->] (11.5,-11)--(11.5,-15);
%			
%			
%			\node at (22,-26.5) {\includegraphics[width=2cm]{GrX5WA94a}};
%			\node at (2,-26.5) {\includegraphics[width=2cm]{GrX6WA1a}};
%			\draw[thick, <-] (5.5,-26.5)--(18.5,-26.5);
%			\node at (12,-30.7) {\includegraphics[width=5.5cm]{zoomXX5aa}};
%			\node at (20,-19.2) {\includegraphics[width=5.5cm]{zoomXX4aa}};
%			\draw[thick, <-] (19,-25.5)--(12.5,-20.5);
%		\end{tikzpicture}
%\caption{The time-dependent graphon in \S\ref{subsection:fun} with corresponding blow--out representations of the evolution of the infected population $i^n$ at each transition.} \label{f:td2}
%	\end{figure}

\appendix
\section{Notation}
\label{sec:notation}
For the reader's convenience we collect here the main notation used in the present paper. 
\begin{itemize}
\item $\rr_+: = [0, + \infty[$
\item a.e., for a.e $x$: almost everywhere, for almost every $x$, with respect to the standard Lebesgue measure
\item $\mathbbm{1}_E$: the characteristic function of the set $E$
\item $\| \cdot \|_\Box:$ the cut norm defined by~\eqref{cut1}
\item $\delta_\Box$: the cut metric defined by~\eqref{cut metric}
\item $W_{\G}$: a step-graphon associated to the undirected graph $\G$ as in~\eqref{G-->W_G}
\item $I^n_j:$: see~\eqref{e:ij}
\item $< u>_{I^n_j}, < U>_{I^n_j\times I^n_k}$: see~\eqref{e:average}
%\item $\mathbb H(n, W)$: the $W$-random graph as in~\eqref{accan}
\item $\mathcal G(n, W)$: the sparse $W$-random graph as in \eqref{ran:infty}
\item $\mathcal{G}(n, W, n^{-\alpha})$: the sparse $W$-random graph as in~\eqref{e:revisione} 
\item $n^\alpha \mathcal{G}(n, W, n^{-\alpha})$:  the scaled sparse $W$-random graph as in \eqref{e:adjr1}
%%\item $\mathbb{H}(n, W, n^{-\alpha})$: see~\eqref{accancut}
\item $K_d$: the constant in \eqref{bound_W}
\item $K_0$: the constant in \eqref{e:bdl2}
\item $K_1$: the constant in \eqref{bound_Wn}
\item $K_a$: the constant in \eqref{bound_W2}
\item $C^\infty_c (\Omega)$: the set of infinitely differentiable, compactly supported functions on the open set $\Omega$
\item $C^0([0, T])$: the Banach space of continuous functions defined on the interval $[0, T]$, endowed with the norm
$$
    \| u \|_{C^0}: = \max_{t \in [0, T]} |u(t)|. 
$$
\end{itemize}

\section{Graphons and converging graph sequences}
	\label{sec:graphons}
	In this appendix we briefly overview some notions and results in the theory of graphs and graphons that we need in the present paper. The exposition mainly follows the work~\cite{BCCZ19}. 
	\subsection{Graphons and  cut distance}
     \begin{definition} \label{d:graphon}
	A \emph{graphon} is a summable function $W:[0,1]^2\to\rr$ satisfying $W(x, y) = W(y, x)$ for a.e. $x,y\in[0,1]$. In the following we denote by $\W$ the set of graphons. 
	\end{definition}
	Given a weighted undirected graph $\G_n$ with $n$ vertices and adjacency matrix $(A_{jk})_{j, k=1, \dots, n}$, 
	we can define a step-graphon $W_{\G_n}$ associated to $\G_n$ by using formula~\eqref{G-->W_G} (see also Figure \ref{f:fig1}).  
	
As anticipated in the Introduction, the theory of graphons makes crucial use of the cut norm defined by \eqref{cut1}. The notion of cut norm was first introduced by Frieze and Kannan \cite{FK99}, and its key importance for graphons has been recently highlighted in a series of papers (see \cite{BCCZ18,BCCZ19,BCLSV06,BCLSV08,BCLSV12} and the references therein). In particular, as pointed out in~\cite{BCLSV08} the cut norm is intimately connected with the notion of \emph{left convergence} for graph sequences (see Remark \ref{rem:left} below for a very brief discussion of this point and \S\ref{ss:analy} for some comments about the analytic properties of the cut norm). 
	
	From the graph theory viewpoint, a serious drawback of the notion of cut norm is the following. Assume $\G$ and $\G'$ are the very same graph, but have different vertex labeling. Then in general the adjacency matrices of $\G$ and $\G'$ are not the same and hence owing to~\eqref{G-->W_G} and~\eqref{cut1} we have $\| W_\G - W_{\G'} \|_{\Box} \neq 0$. A way out this issue is the notion of \emph{cut distance}. To define it, we first introduce some notation.  First, we recall that a map $\phi:[0,1]\to[0,1]$ is  measure--preserving if, for every measurable set $A\subseteq[0,1]$, the set $\phi^{-1}(A)$ is also measurable and furthermore $\left|\phi^{-1}(A)\right|=\left|\phi(A)\right|$. Given a graphon $W$ and a measure preserving map $\phi$, we set $W^\phi(x,y):=W(\phi(x),\phi(y))$, for every $x,y\in[0,1]$. 
	\begin{definition}
	For every $U, W \in \W$, the cut distance $\delta_\Box$ between $U$ and $W$ is defined as
	\begin{equation}
		\label{cut metric}
		\delta_{\Box}(U,W):=\inf_{\phi}\|U-W^\phi\|_\Box
	\end{equation}
	where the infimum is taken over all measure preserving maps $\phi:[0,1]\to[0,1]$. 
	\end{definition}
	Note that, strictly speaking, $\delta_\Box$  is only a pseudometric and not a distance since the equality $\delta_{\Box}(U,W)=0$ does not imply that $U \equiv W$. As a matter of fact, $\delta_{\Box}(W,W^\phi)=0$, for every fixed $W\in\W$ and any given measure preserving map $\phi$. For this reason, we will always tacitly identify two graphons with zero cut distance.
     \begin{remark}[Cut distance and left convergence of graph sequences]
     	\label{rem:left}
     Let us briefly recall the link between the cut metric and the so called left convergence of graph sequences, redirecting to~\cite{BCLSV08} for an extended discussion.  Very loosely speaking, a sequence of graphs $\{ \G_n\}_{n \in \nn}$ is left convergent if the \emph{local structure} of the graphs somehow stabilizes in the $n\to + \infty$ limit, in the sense that the (suitably normalized) number of copies in $\G_n$ of any given finite subgraph tends to a limit value as $n\to+\infty$ (see~\cite{BCLSV08} for the rigorous definition). The connection between left convergence and cut distance is unveiled by \cite[Theorem 3.8]{BCLSV08}: if $\{ \G_n \}_{n \in \nn}$ is a sequence of undirected graphs with uniformly bounded adjacency matrices, then $\{ \G_n \}_{n \in \nn}$ is left convergent if and only if $\lim_{n \to + \infty} \delta_\Box (W_{\G_n}, W)=0$, for some bounded graphon $W$.
    \end{remark}
	\subsection{Analytic properties of the cut distance}\label{ss:analy}
	We refer to~\cite{Jan14} for a detailed discussion of the cut norm from the point of view of mathematical analysis. Since we have used it in the paper, we recall the relation from~\cite[equation (4.1)]{Jan14} 
	\begin{equation}
	\label{cut2}
	\|W\|_{\Box} \leq \sup_{\|f\|_\infty,\|g\|_\infty\leq1}\left|\int_{[0,1]^2}W(x,y)f(x)g(y)\,dxdy\right|
     \leq  4 \|W\|_{\Box}. 
     \end{equation}
	where the supremum is taken over all real--valued, measurable functions $f,g$ defined on $[0,1]$.
	
	It is also interesting to point out that convergence in the cut norm is in between strong and weak convergence in $L^1([0,1]^2)$. Indeed,  the cut norm induces a weaker topology than the strong one, see \cite[Section 8.3]{Lov12}. On the other hand, one can show  that convergence in the cut norm is strictly stronger than the weak convergence, see \cite[Appendix F]{Jan14}. Loosely speaking, the reason why convergence in the cut norm is strictly stronger than weak convergence is the following. Weak convergence in $L^1([0,1]^2)$ can be characterized as convergence tested against characteristic functions in the form $\mathbbm{1}_{S \times T}$, with $S,T\subseteq[0,1]$. Convergence in the cut norm requires that this convergence is uniform with respect to $S$ and $T$.     
	\subsection{Compactness results for dense and sparse graph sequences}
	We now quote~\cite[Theorem 2.13]{BCCZ19} (see also~\cite[Theorem 5.1]{LS07} for the case $p= \infty$).
	\begin{theorem}
		\label{prop:comp}
		Assume $1<p\leq\infty$ and $C>0$ and consider a sequence of graphons $\{ W_n \}_{n \in \nn}\subset\W$ such that $\| W _n\|_p \leq C$ for every $n \in \nn$. 
		Then there is $W \in \W$ such that, up to subsequences,  
		$$
		    \lim_{n \to + \infty} \delta_{\Box} (W_n, W)=0. 
		$$
			\end{theorem}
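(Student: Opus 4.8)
The plan is to prove this compactness statement by combining a weak (Frieze--Kannan type) regularity lemma with a diagonal extraction, the essential extra ingredient with respect to the classical bounded case being that the hypothesis $p>1$ forces the family $\{W_n\}$ to be uniformly integrable. First I would record this uniform integrability: since $[0,1]^2$ has finite measure and $\sup_n \|W_n\|_p \leq C$ with $p>1$, for every $\eps>0$ there is a level $M=M(\eps)$ with $\sup_n \iint_{\{|W_n|>M\}} |W_n| \,dx\,dy \leq \eps$. This is exactly the step that fails for $p=1$ and explains the restriction on $p$; by Jensen's inequality the same uniform integrability is inherited by every blockwise average (stepping) of $W_n$.

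The second ingredient is an approximation of each $W_n$, in cut norm, by a step graphon with a number of steps bounded solely in terms of $\eps$ (and $C,p$), uniformly in $n$. For $p\geq 2$ this is the Frieze--Kannan weak regularity lemma in the form $\|W_n - (W_n)_{P}\|_\Box \leq \eps \|W_n\|_2 \leq \eps C$, where $(W_n)_P$ denotes the stepping of $W_n$ over a partition $P$ of $[0,1]$ into boundedly many parts. For $1<p<2$, where an $L^2$ bound is unavailable, I would first truncate at the level $M(\eps)$ above, writing $W_n = W_n^{\leq M} + W_n^{>M}$: the bounded part $W_n^{\leq M}$ is handled by the bounded regularity lemma, while the tail $W_n^{>M}$ is negligible in $L^1$, hence in cut norm, uniformly in $n$. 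Either way one obtains, for each precision level $\eps_k \to 0$, a partition $P_n^k$ into at most $N_k$ parts (with $N_k$ independent of $n$) and the associated stepping $\widehat W_n^k := (W_n)_{P_n^k}$ satisfying $\|W_n - \widehat W_n^k\|_\Box \leq \eps_k$ for all $n$.

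With these approximations the argument is a standard diagonalization exploiting the finite-dimensionality of step graphons. For fixed $k$, after composing with a suitable measure-preserving map (harmless since $\delta_\Box$ infimizes over such maps) I may assume each $P_n^k$ consists of $N_k$ consecutive subintervals, so that $\widehat W_n^k$ is encoded by the vector of interval lengths (a point of the compact simplex in $\mathbb{R}^{N_k}$) together with the symmetric $N_k\times N_k$ matrix of block values. The interval lengths converge along a subsequence by compactness of the simplex; the block values stay tight by the uniform integrability above (blocks of vanishing length carrying negligible $L^1$ mass), so they too converge along a further subsequence, and the corresponding step graphons converge in $\delta_\Box$ to a limit $V^k \in \W$. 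Diagonalizing over $k$ I extract a single subsequence along which $\widehat W_n^k \to V^k$ in $\delta_\Box$ for every $k$. The triangle inequality then gives $\delta_\Box(V^k,V^\ell) \leq \eps_k + \eps_\ell + \limsup_n \delta_\Box(\widehat W_n^k, \widehat W_n^\ell)$, so $\{V^k\}$ is Cauchy for $\delta_\Box$ and converges to some $W\in\W$ (standard lower semicontinuity under this convergence moreover gives $\|W\|_p\leq C$). A final triangle inequality, $\delta_\Box(W_n, W) \leq \|W_n - \widehat W_n^k\|_\Box + \delta_\Box(\widehat W_n^k, V^k) + \delta_\Box(V^k, W)$, yields $\delta_\Box(W_n, W)\to 0$ along the subsequence.

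I expect the main obstacle to be the regularity step in the genuinely unbounded regime $1<p<2$ together with the control of the limit: one must guarantee that no mass of $W_n$ escapes to infinity in the passage to the limit, so that each $V^k$, and ultimately $W$, remains a bona fide $L^p$ graphon rather than developing a singular part. This is precisely what uniform integrability provides, and it is where the proof departs substantively from the classical $L^\infty$ compactness theorem; the bookkeeping with measure-preserving maps needed to normalize the partitions, while routine, also requires care since $\delta_\Box$ is only a pseudometric.
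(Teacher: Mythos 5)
First, a point of reference: the paper does not prove this statement at all --- it is quoted verbatim from \cite[Theorem 2.13]{BCCZ19} (with \cite[Theorem 5.1]{LS07} covering $p=\infty$), so your proposal can only be compared with the proof in those references. Your skeleton is in fact the same as theirs up to the last step: uniform integrability extracted from the $L^p$ bound with $p>1$; a weak (Frieze--Kannan) regularity step giving, uniformly in $n$, step-graphon approximations in cut norm with a bounded number of parts (via truncation at level $M(\varepsilon)$ when $1<p<2$); rearrangement of the partition classes into intervals by measure-preserving maps; tightness of the block data (on blocks of non-vanishing lengths the values are bounded by $C(\lambda_i\lambda_j)^{-1/p}$, while blocks of vanishing length carry mass at most $C(\lambda_i\lambda_j)^{1-1/p}\to 0$, which is exactly where $p>1$ enters); and a diagonal extraction. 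All of this is sound.

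The genuine gap is the final step, where the limit graphon is actually produced. You assert that the $\delta_\Box$-Cauchy sequence $\{V^k\}$ ``converges to some $W\in\mathcal{W}$'' as though completeness of the cut metric were standard. It is not in the unbounded setting: for merely $L^1$-bounded graphons the compactness (and the relevant completeness) genuinely fails --- the paper itself notes that the $p=1$ analogue of the theorem is false --- and completeness of the $L^p$-ball under $\delta_\Box$ is essentially as hard as the compactness you are proving, so invoking it is circular unless you supply a proof. The cited proofs avoid the issue by choosing, for each $n$, \emph{nested} partitions $P_n^1\prec P_n^2\prec\cdots$; then the limits $V^k$ form a martingale with respect to the filtration generated by the limiting partitions, and the $L^1$ martingale convergence theorem (whose uniform-integrability hypothesis is precisely what the $L^p$ bound supplies) yields $W$ with $\|V^k-W\|_1\to 0$, hence $\delta_\Box(V^k,W)\to 0$. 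Your partitions are chosen independently for each accuracy $\varepsilon_k$, so no martingale structure is available. The gap can be closed without nestedness, but it requires an argument you have not given: take $\varepsilon_k$ summable, compose with measure-preserving maps (using that $\|U^\phi-V^\phi\|_\Box\leq\|U-V\|_\Box$ for any measure-preserving $\phi$) so that $\{V^k\}$ becomes Cauchy in the cut \emph{norm}; then extract a weak-$L^p$ (weak-$\ast$ if $p=\infty$) limit $W$ from the $L^p$-bounded sequence, and upgrade weak convergence to cut-norm convergence by passing to the limit in $\ell$ in the uniform bound $\bigl|\int_{S\times T}(V^k-V^\ell)\,dx\,dy\bigr|\leq\eta_k$ for each fixed pair $S,T$. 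One of these two repairs (refining partitions plus martingale convergence, or alignment plus weak compactness) must be made explicit; as written, the construction of $W$ --- the heart of any compactness proof --- is missing.
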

			The analogous of Theorem~\ref{prop:comp} for $p=1$ is false in general, but compactness of bounded $L^1$ balls can be recovered by 
			adding a uniform integrability assumption, see~\cite[Theorem C.7]{BCCZ19}. 		
	We now recall the following fundamental definitions.
	       \begin{definition}
	       A sequence of undirected graphs $\{ \G_n \}_{n \in \nn}$ is said to be dense or sparse if 
	       $$
	           \liminf_{n \to + \infty} \| W_{\G_n} \|_{1} > 0 \quad \text{or} \quad  \lim_{n \to + \infty} \| W_{\G_n} \|_{1} = 0, 
	       $$
	       respectively. In the previous expression, $W_{\G_n}$ is the step graphon defined as in~\eqref{G-->W_G}. 
	       \end{definition}
	       The quantity $\| W_{\G_n} \|_{1}$ essentially represents the edge density of the graph. According to the previous definition, if  $\G_n$ is a sequence of dense simple graphs then the number of edges is $O(n^2)$ as $n$ is large enough, whereas if $\G_n$ is a sequence of sparse simple graphs then the number of edges is $o(n^2)$ as $n\to+\infty$.
	       Given a sequence of dense graphs $\{ \G_n \}_{n \in \nn}$, by applying Theorem~\ref{prop:comp} and its analogue in the case $p=1$ \cite[Theorem C.7]{BCCZ19} to the sequence $\{ W_{\G_n} \}_{n \in \nn}$ we obtain useful compactness results. The same results apply to sequences of sparse graphs, but in this case the associated sequence of graphons converges to the uninformative limit $W\equiv0$.  As a matter of fact, however, most of the graph sequences relevant for real-world applications are sparse, see the introduction to~\cite{BCCZ19}. As pointed out in~\cite{BCCZ19}, a way to circumvent this obstruction and extract information on the asymptotic behavior of sparse sequences is to normalize $\G_n$ and consider the sequence $\{ \G_n/  \| W_{\G_n} \|_{1}  \}_{n \in \nn}$.  Before discussing a selection of the main results in~\cite{BCCZ19} we have to introduce some further notation.   
\begin{definition}[$(C,\eta)-L^p$ upper regular graph] 
	Fix $C, \eta >0$. Given $p \in\, ]1, +\infty[$ and a graph $\G$ with $|V(\G)|=n \ge \eta^{-1}$ vertices and weighted adjacency matrix $A=(A_{k\ell})_{k,\ell=1}^n$, we say that $\G$ is a $(C,\eta)-L^p$ upper regular graph if the following holds. Let $\mathcal{P}:=\left\{V_1,\,\dots,\,V_m\right\}$ be any partition of $V(\G)$ into disjoint sets such that $|V_j|\geq\eta n$ for every $j=1,\,\dots,\,m$. Then 
	 \begin{equation}\label{e:lpreg}
	 \sum_{i, j=1}^m \frac{|V_i| |V_j|}{n^2} \left|\sum_{k \in V_i, \ell \in V_j} \frac{A_{k \ell}}{|V_i| |V_j|}      \right|^p   \leq C^p\|W_\G\|^p_{1}.
	 \end{equation}
	 For $p=\infty$, the definition is analogous: it suffices to replace \eqref{e:lpreg} with 
	 \[
	 \max_{1\leq i,j\leq m}\left|\sum_{k \in V_i, \ell \in V_j} \frac{A_{k \ell}}{|V_i| |V_j|}\right|\leq C\|W_\G\|_{1}.
	 \]
\end{definition}
	Very loosely speaking, a graph $\G$ is $(C,\eta)-L^p$ upper regular if, whenever its vertices are divided into a certain number of groups none of which is too small, then averaging the edge weights with respect to the partition gives a weighted graph with bounded $L^p$ norm. Note furthermore that if $p=1$ then equation~\eqref{e:lpreg} is always satisfied with $C=1$, so the definition is only meaningful if $p>1$. We now quote~\cite[Definition 2.7]{BCCZ19}.
\begin{definition}
	\label{d:Cup}
	 Fix $p \in\, ]1, + \infty]$ and assume that $ \{\G_n\}_{n \in \nn}$ is a sequence of sparse graphs such that $|V(\G_n)|=n$. We say that $\{\G_n\}_{n \in \nn}$ is a $C$--upper $L^p$ regular sequence if, for every $\eta>0$, there is $n_0=n_0(\eta)$ so that $\G_n$ is $(C+\eta,\eta)-$ upper $L^p$ regular, for every $n\geq n_0$.
     \end{definition}
Note that \cite[Proposition A.1]{BCCZ19} implies that, if $ \{\G_n\}_{n \in \nn}$ is a $C$--upper $L^p$ regular sequence of simple graphs for some $p>1$, then  the average degree of $\G_n$ blows up as $n \to + \infty$. The importance of Definition \ref{d:Cup} is given by the next result \cite[Theorem 2.8, Proposition 2.10]{BCCZ19}. 
	 \begin{theorem}
	 	\label{prop:L^p conv}
	 	Fix $p \in\, ]1, + \infty]$ and $C>0$. Assume that $\{\G_n\}_{n\in \nn}$ is a $C$--upper $L^p$ regular sequence of graphs.  Then there is $W\in\W$ such that $\|W\|_{p}\leq C$ and up to subsequences
	 	\begin{equation} \label{e:cutnormconv}
	 	\lim_{n \to + \infty}\delta_{\Box}\left( \f{W_{\G_n}}{\|W_{\G_n} \|_{1}},W \right)=0.
	 	\end{equation}
Conversely, if  $\{\G_n\}_{n\in \nn}$ is a sequence of graphs such that for some $W \in \W$ with $\| W \|_{p} < + \infty$ equation~\eqref{e:cutnormconv} holds true, then $\{\G_n\}_{n\in \nn}$ is a $\|W\|_{p}$--upper $L^p$ regular sequence.
	 \end{theorem}
          In the case $p=1$ the compactness result is still true provided we replace the assumption that  $\{\G_n\}_{n\in \nn}$ is a $C$--upper $L^p$ regular sequence of graphs with the assumption that  $\{\G_n\}_{n\in \nn}$ is a uniformly upper regular sequence, see~\cite[Theorem C.13]{BCCZ19}. 
          
 \subsubsection{Convergence in the cut metric and convergence in the cut norm}
     Summing up, cut--metric convergence provides a convenient framework to discuss the asymptotic properties of sequences of both dense and sparse large graphs. As a matter of fact, however, one often consider sequences converging in the cut norm. Although this might sound like a stronger assumption, the following proposition states that the two conditions are equivalent, up to a relabeling of the vertices. For the proof we refer to \cite[Lemma 5.3]{BCLSV08} in the case $p=+\infty$ and \cite[Proposition 5.2]{BCCZ19} in the case $1<p<\infty$.
 	 \begin{proposition}
 	 	If $\{\G_n\}_{n \in \nn}$ is a sequence of graphs satisfying the assumptions of either Theorem \ref{prop:comp} or Theorem \ref{prop:L^p conv}, then there are $W\in \W$ and, for every $n$, a labeling of the vertices of $\G_n$ such that
 	 	\[
 	 	\|W_{\G_n}-W\|_{\Box}\to0\qquad\text{or}\qquad\left\|\f{W_{\G_n}}{\|W_{\G_n\|_1}}-W\right\|_{\Box}\to0
 	 	\]
         as $n \to + \infty$, respectively.  
 	 \end{proposition}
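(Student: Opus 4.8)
The plan is to unfold the definition of the cut distance provided by the compactness theorems and then to upgrade the near-optimal measure preserving maps it produces into genuine relabelings of the vertices. The two cases are perfectly parallel: Theorem~\ref{prop:comp} yields $W\in\W$ with $\delta_\Box(W_{\G_n},W)\to0$, whereas Theorem~\ref{prop:L^p conv} yields $W$ with $\delta_\Box(W_{\G_n}/\|W_{\G_n}\|_1,W)\to0$, and since $W_{\G_n}/\|W_{\G_n}\|_1$ is again the step graphon associated to the rescaled adjacency matrix, it is enough to argue for a generic step graphon $U_n$ (equal to $W_{\G_n}$ or to its normalization). Writing $\tau_\sigma$ for the measure preserving bijection of $[0,1]$ that permutes the intervals $I^n_1,\dots,I^n_n$ according to a permutation $\sigma\in S_n$, relabeling the vertices of $\G_n$ by $\sigma$ turns $U_n$ into $U_n^{\tau_\sigma}$. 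Hence the statement is equivalent to the claim that $\inf_{\sigma\in S_n}\|U_n^{\tau_\sigma}-W\|_\Box\to0$.

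First I would rewrite $\delta_\Box$ in a form adapted to relabelings. Since the cut norm is invariant under the simultaneous action of a measure preserving bijection on both variables, for bijective $\phi$ one has $\|U_n-W^\phi\|_\Box=\|U_n^{\phi^{-1}}-W\|_\Box$. Using the standard fact that the infimum defining the cut distance may be computed over measure preserving bijections rather than over general measure preserving maps (see~\cite{Lov12}), the hypothesis $\delta_\Box(U_n,W)\to0$ produces a sequence of bijections $\phi_n$ with $\|U_n^{\phi_n}-W\|_\Box\to0$. The entire problem is thereby reduced to replacing each optimal bijection $\phi_n$ by an interval permutation $\tau_{\sigma_n}$ while keeping $\|U_n^{\phi_n}-U_n^{\tau_{\sigma_n}}\|_\Box$ small.

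This passage is the heart of the matter and the step I expect to be the main obstacle, because an arbitrary measure preserving bijection need not map the intervals $I^n_j$ to intervals. The resolution must genuinely exploit that $U_n$ is piecewise constant on the blocks $I^n_j\times I^n_k$ and that all intervals are equimeasurable of length $1/n$. My plan is to invoke the weak regularity lemma to approximate the target $W$ in cut norm, $\|W-\widehat W_m\|_\Box\le\varepsilon$, by a step function $\widehat W_m$ adapted to a fixed coarse partition into $m$ parts; against such a piecewise constant target only the coarse distribution of the $n$ equimeasurable intervals matters, so one may round $\phi_n$ to an interval permutation $\tau_{\sigma_n}$ that mimics how $\phi_n$ assigns intervals to the $m$ coarse cells, incurring a quantization error of order $m/n$ that is negligible once $n\gg m$. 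The delicate point is to render all these estimates uniform in $n$ and vanishing in the limit: this is exactly where the uniform bound $\|U_n\|_p\le C$ is used in the range $1<p<\infty$, and where the uniform integrability hypothesis is needed in the borderline case $p=1$, in order to prevent concentration of mass from destroying the cut-norm control.

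Combining the three steps produces relabelings $\sigma_n$ of the vertices of $\G_n$ for which $\|U_n^{\tau_{\sigma_n}}-W\|_\Box\to0$, that is, $\|W_{\G_n}-W\|_\Box\to0$ or $\bigl\|W_{\G_n}/\|W_{\G_n}\|_1-W\bigr\|_\Box\to0$ after relabeling, as claimed. For the quantitative form of the rounding estimate, which I have only sketched here, I would ultimately defer to the arguments of~\cite[Lemma 5.3]{BCLSV08} in the case $p=+\infty$ and of~\cite[Proposition 5.2]{BCCZ19} in the case $1<p<\infty$.
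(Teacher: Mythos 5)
Your proposal is correct and ultimately rests on exactly the same foundation as the paper: the paper gives no proof of this proposition at all, but simply refers the reader to \cite[Lemma 5.3]{BCLSV08} in the case $p=+\infty$ and \cite[Proposition 5.2]{BCCZ19} in the case $1<p<\infty$, which are precisely the references you defer to for the key rounding estimate. Your additional outline (passing from measure-preserving maps to bijections, approximating $W$ by a step function via weak regularity, and rounding the bijection to an interval permutation with the uniform $L^p$ bound controlling the error) is a faithful sketch of what those cited arguments do, so there is nothing in it that conflicts with the paper's own treatment.
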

\begin{remark} \label{r:discussion}
	By combining Theorems~\ref{prop:comp}--\ref{prop:L^p conv} we can conclude that assumption~\eqref{e:conv_tloc_cut} is satisfied by a large class of sequences of undirected graphs. In particular, owing to Theorem~\ref{prop:comp} it is satisfied up to subsequences and vertex relabelings by any sequence $\{\G_n\}_{n\in \nn}$ such that $\| W_{\G_n} \|_{p}$ is uniformly bounded. If the sequence $\{\G_n\}_{n\in \nn}$ is dense, then the limit $W$ we obtain this way is already meaningful.  If the sequence  $\{\G_n\}_{n\in \nn}$ is sparse, then the limit is the uninformative trivial graphon $W \equiv 0$. However, Theorem~\ref{prop:L^p conv} implies that, in the sparse case,  assumption~\eqref{e:conv_tloc_cut} is satisfied  (up to subsequences and vertex relabelings) by the normalized sequence $\{\G_n/ \| W_{\G_n} \|_{1}\}_{n\in \nn}$ provided   $\{\G_n\}_{n\in \nn}$ is $C$--upper $L^p$ regular for some $C>0$ and $p>1$.  
\end{remark}


\begin{thebibliography}{9999999999}
	
\bibitem[ABS21]{BorgsSIR}
Alimohammadi Y., Borgs C., Saberi A., \emph{Algorithms using local graph features to predict epidemics},  Proceedings of the 2022 Annual ACM-SIAM Symposium on Discrete Algorithms (SODA) (2022), 3430--3451. 
	
\bibitem[ABNPV21]{perthame0}
Almeida L., Bliman P.-A., Nadin G., Perthame B., Vauchelet N., \emph{Final size and convergence rate for an epidemic in heterogeneous population}, Math. Models Methods Appl. Sci. {\bf 31}(5) (2021), 1021--1051. 

\bibitem[BBPSV05]{BBPSV05}
Barthelemy M., Barrat A., Pastor--Satorras R., Vespignani A., {\em Dynamical patterns of epidemic outbreaks in complex heterogeneous networks}, J. Theoretical Biology {\bf235} (2005), 275--288.

\bibitem[BRCDF20]{bellomo}
Bellomo N., Bingham R., Chaplain M.A.J., Dosi G., Forni G., Knopoff D.A., Lowengrub J., Twarock R., Virgillito M.E., \emph{A multiscale model of virus pandemic: heterogeneous interactive entities in a globally connected world}, Math. Models Methods Appl. Sci. {\bf30} (2020), 1591–1651.

\bibitem[BS02]{BS02}
Benjamini I., Schramm O., {\em Recurrence of distributional limits of finite planar graphs}, Electr. J. Probab. {\bf6} (2001), 1--13.

\bibitem[Ber60]{bernoulli}
Bernoulli D., {\em Essai d'une nouvelle analyse de la mortalit\'e caus\'ee par la petite v\'erole et des avantages de l'inoculation pour la pr\'evenir}, M\'emoires de Math\'ematiques et de Physique, Acad\'emie Royale des Sciences, Paris, 1760, 1--45.
	
\bibitem[BCCG21]{BCCG21}
Borgs C., Chayes J.T., Cohn H., Ganguly S., \emph{ Consistent nonparametric estimation for heavy-tailed sparse graphs}, Ann. Statist. {\bf 49}(4) (2021), 1904--1930.

		
	\bibitem[BCCZ18]{BCCZ18}
	Borgs C., Chayes J.T., Cohn H., Zhao Y., {\em An $L^p$ theory of sparse graph convergence II: LD convergence, quotients, and right convergence}, Annals of Prob. {\bf45} (2018), 337--396.
	
	\bibitem[BCCZ19]{BCCZ19}
	Borgs C., Chayes J.T., Cohn H., Zhao Y., {\em An $L^p$ theory of sparse graph convergence I: Limits, sparse random graph models, and power law distributions}, Trans. Am. Math. Soc. {\bf372} (2019), 3019--3062.

	
	
	\bibitem[BCL10]{BCL10}
	Borgs C., Chayes J.T.,  Lov\'asz L., {\em Moments of two--variable functions and the uniqueness of graph limits}, Geometr. Funct. Anal. {\bf19} (2010), 1597--1619.

	\bibitem[BCLSV06]{BCLSV06}
	Borgs C., Chayes J.T., Lov\'asz L., S\'os V.T., Vesztergombi K., {\em Counting graph homomorphisms}, Top. Discr. Math. Ser. Algor. Combin. {\bf26} (2006), 315--371.
 	
 	\bibitem[BCLSV08]{BCLSV08}
 	Borgs C., Chayes J.T., Lov\'asz L., S\'os V.T., Vesztergombi K., {\em Convergent sequences of dense graphs I. Subgraph frequencies, metric properties and testing}, Adv. Math. {\bf 219} (2008), 1801--1851. 
	 
 	\bibitem[BCLSV12]{BCLSV12}
 	Borgs C., Chayes J.T., Lov\'asz L., S\'os V.T., Vesztergombi K., {\em Convergent sequences of dense graphs II. Multiway cuts and
 	statistical physics}, Ann. Math. {\bf176} (2012), 151--219.
 
 	\bibitem[BCD20]{BCD20}
 	Braides A., Cermelli P., Dovetta S., {\em $\Gamma-$limit of the cut functional on dense graph sequences}, ESAIM: COCV {\bf26} (2020) 26.
 	
 	\bibitem[BCC01]{BCC01}
	Brauer F., Castillo-Ch\'avez C., {\em Mathematical Models in Population Biology and Epidemiology}, Springer--Verlag, New York, 2001.
	
	\bibitem[BDW08]{BDW08}
	Brauer F., van den Driessche P., Wu J. (Editors), {\em Lecture Notes in Mathematical Epidemiology}, Springer--Verlag, Berlin, Heidelberg, 2008.
	
	\bibitem[Cin11]{C}
	\c{C}inlar E., {\em Probability and Stochastics}, Graduate Texts in Mathematics {\bf261}, Springer--Verlag New York, 2011.
	\bibitem[DDZ22]{sis-graphon2}
	Delmas J.-F. , Dronnier D., Zitt P.-A., \emph{An infinite-dimensional metapopulation SIS model}, J. Diff. Eq. {\bf313} (2022), 1--53.

	\bibitem[DG05]{DG05}
	Daley D.J., Gani J., {\em Epidemic Modeling: An Introduction}, Cambridge University Press, Cambridge, 2005.
	
	\bibitem[DF81]{diaconisFreedman81}
	Diaconis P., Freedman D., \emph{ On the statistics of vision: the Julesz Conjecture}, J. Math. Psychology {\bf 2} (1981), 112--138.
	
	\bibitem[DGM08]{DGM08}
	Dorogovtsev S.N., Goltsev A.V., Mendes J.F.F., {\em Critical phenomena in complex networks}, Rev. Mod. Phys. {\bf80}(4), (2008), 1275--1335. 
	
	\bibitem[FK99]{FK99}
	Frieze A., Kannan R., {\em Quick approximation to matrices and applications}, Combinatorica {\bf19} (1999), 175--220.
	
	
	\bibitem[GC19]{GC19}
	Gao S., Caines P.E., {\em Spectral representations of graphons in very large network systems control}, IEEE Conference on Decision and Control (2019), 5068--5075.

	
	\bibitem[Het00]{Het00}
	Hethcote H.W., {\em The mathematics of infectious diseases}, SIAM Review {\bf42}(4) (2000), 599--653.

\bibitem[Jan14]{Jan14}
Janson S., {\em Graphons, cut norm and distance, couplings and rearrangements}, Vol. 4 of New York J. Math. Monogr. (2013).


\bibitem[KM17]{KVM17}
Kaliuzhnyi--Verbovetskyi D., Medvedev G.S., {\em The semilinear heat equation on sparse random graphs}, SIAM J. Math. Anal. {\bf49}(2) (2017), 1333--1355.
%\bibitem[KaMed17]{Med17}
%	D. Kaliuzhnyi-Verbovetskyi and G. S. Medvedev, {\em The semilinear heat equation on sparse random graphs}, SIAM J. Math. Anal., {\bf 49} (2017), p 1333--1355.  
	\bibitem[KM22]{Med22}
	Kaliuzhnyi-Verbovetskyi D., Medvedev G.S., {\em Sparse Monte Carlo method for nonlocal diffusion problems}, SIAM J. Num. Anal. {\bf 60}(6) (2022), 3001--3028.  


	
\bibitem[KMK27]{KMK27}
Kermack W.O., McKendrick A.G., {\em Contribution to the mathematical theory of epidemics}, Proc. Roy. Soc. London A {\bf115} (1927), 700--721.
	
\bibitem[Lov12]{Lov12}
Lov\'asz L., Vol. 60 of {\em Large networks and graph limits}, American Mathematical Society Colloquium Publications (2012).


\bibitem[LS06]{LS}
Lov\'asz L., Szegedy B., {\em Limits of dense graph sequences}, J. Combin. Theory Ser. B {\bf96}(6) (2006), 933--957.


\bibitem[LS07]{LS07}
Lov\'asz L., Szegedy B., {\em Szemer\'edy lemma's for analyst}, Geom. Funct. Anal. {\bf17} (2007), 252--270.
	
\bibitem[Med14a]{Med14a}
Medvedev G.S., {\em The nonlinear heat equation on dense graphs and graph limits}, SIAM J. Math. Anal. {\bf46}(4) (2014), 2743--2766.
	
\bibitem[Med14b]{Med14b}
Medvedev G.S., {\em The nonlinear heat equation on W--random graphs}, Arch. Rational Mech. Anal. {\bf212}(3) (2014), 781--803.
		
	\bibitem[Med19C]{Med14CORR}
	Medvedev G.S., {\em Correction to: The nonlinear heat equation on W--random graphs}, 
	Arch. Rational Mech. Anal. {\bf 231}(3) (2019), 1305--1308.
	
	\bibitem[Med19]{Med19}
	Medvedev G.S., {\em The continuum limit of the Kuramoto model on sparse random graphs}, Comm. Math. Sciences {\bf17}(4) (2019), 883--898.
	
\bibitem[MT20]{Med20a}
Medvedev G.S., Tang X., {\em The Kuramoto model on power law graphs: synchronization and contrast states}, J. Nonlinear Sci. {\bf 30} (2020), 2405--2427.  


	
	
	
	\bibitem[McK26]{McK26}
	McKendrick A.G., {\em Applications of mathematics to
	medical problems}, Proc. Edinburgh Math. Soc., {\bf44} (1926), 98--130.

	\bibitem[MPSV02]{MPSV02}
	Moreno Y., Pastor--Satorras R., Vespignani A., {\em Epidemic outbreaks in complex heterogeneous networks}, European Phys. J. B {\bf26}(4) (2002), 521--529.
	
	\bibitem[New02]{New02}
	Newman M.E.J., {\em Spread of epidemic disease on networks}, Phys. Rev. E {\bf66}
	(2002), 016128. 
	
	\bibitem[Mur02]{Mur02}
	Murray J.D., {\em Mathematical Biology: I. An Introduction}, volume 17 of Interdisciplinary Applied Mathematics, Springer--Verlag, New York, third edition, (2002).
	
	\bibitem[Mur03]{Mur03}
	Murray J.D., {\em Mathematical biology: II. Spatial models and biomedical applications}, volume 18 of Interdisciplinary Applied Mathematics. Springer--Verlag, New York, third edition, (2003).
	
	\bibitem[NPP16]{NPP16}
	Nowzari C., Preciado V.M., Pappas G.J., {\em Analysis and control
	of epidemics: a survey of spreading processes on complex networks},
	IEEE Control Syst. Mag. {\bf36}(1) (2016), 26--46.

	\bibitem[PSCMV15]{PSCMV15}
	Pastor--Satorras R., Castellano C., Van Mieghem P., Vespignani A., {\em Epidemic processes in complex networks}, Rev. Mod. Phys. {\bf87}(3) (2015), 925--979.
	
	\bibitem[PSV01]{PSV01}
	Pastor--Satorras R., Vespignani A., {\em Epidemic dynamics and endemic states in complex networks}, Phys. Rev. E {\bf63} (2001), 066117.
	
	\bibitem[VFG20]{VFG20}
	Vizuete R., Frasca P., Garin F., {\em Graphon--based sensitivity analysis of SIS epidemics}, IEEE Control Systems Letters {\bf4}(3) (2020), 542--547.


\end{thebibliography}
\end{document}